\documentclass[11pt]{article}
\usepackage{cite}
\usepackage{mathrsfs} 
\usepackage{ifthen}
\usepackage{fancyhdr}
\usepackage[us,12hr]{datetime} 
\usepackage{exscale}
\usepackage{tabularx}
\usepackage{latexsym}
\usepackage{amsmath,amssymb,amstext,amsthm} 
\usepackage{graphicx,color,epsfig}
\usepackage[table]{xcolor}
\usepackage{graphicx}
\usepackage{fullpage}        
\usepackage{float}
\usepackage{verbatim}
\usepackage{enumerate}
\usepackage[shortlabels]{enumitem}
\usepackage[bookmarks,pagebackref,
    pdfpagelabels=true, 
    ]{hyperref}
    \numberwithin{equation}{section}
    \numberwithin{figure}{section}
\usepackage{makeidx}
\usepackage{tikz}
\usepackage[final]{pdfpages}
\usepackage{algorithm,algorithmic} 
\usepackage{lineno}
\usepackage{caption}
\numberwithin{table}{section}
\numberwithin{algorithm}{section}
\makeindex

\setlength{\evensidemargin}{0.17in}
\setlength{\oddsidemargin}{0.17in} 
\setlength{\textheight}{9.19in} \setlength{\textwidth}{6.35in}


\def\R{\mathbb{R}}
\def\Sc{\mathbb{S}}
\def\Sn{\Sc^n}

\def\Snp{\Sc_+^n}
\def\Srp{\Sc_+^r}

\def\Snpp{\Sc_{++}^n}

\def\Rn{\mathbb{R}^n}

\def\eqref#1{{\normalfont(\ref{#1})}}




\def\eqref#1{{\normalfont(\ref{#1})}}

\newtheorem{theorem}{Theorem}[section]

\newtheorem{assump}[theorem]{Assumption}
\newtheorem{prop}[theorem]{Proposition}

\newtheorem{cor}[theorem]{Corollary}
\newtheorem{corollary}[theorem]{Corollary}

\newtheorem{remark}[theorem]{Remark}

\newtheorem{lemma}[theorem]{Lemma}

\newtheorem{fact}[theorem]{Fact}

\newcommand{\textdef}[1]{\textit{#1}\index{#1}}

\newcommand{\Xa}{{X(\alpha)}}
\newcommand{\ya}{{y(\alpha)}}
\newcommand{\Za}{{Z(\alpha)}}

\newcommand{\LL}{{\mathcal L} }

\newcommand{\OO}{{\mathcal O} }

\newcommand{\Rm}{{\R^m\,}}

\newcommand{\NN}{{\mathbb N}}

\newcommand{\A}{{\mathcal A}}

\newcommand{\II}{{\mathcal I}}

\newcommand{\F}{{\mathcal F\,}}

\newcommand{\bbm}{\begin{bmatrix}}
\newcommand{\ebm}{\end{bmatrix}}
\newcommand{\bem}{\begin{pmatrix}}
\newcommand{\eem}{\end{pmatrix}}
\newcommand{\beq}{\begin{equation}}
\newcommand{\beqs}{\begin{equation*}}
\newcommand{\bet}{\begin{table}}
\newcommand{\eeq}{\end{equation}}
\newcommand{\eeqs}{\end{equation*}}
\newcommand{\beqr}{\begin{eqnarray}}

\DeclareMathOperator{\face}{face}
\DeclareMathOperator{\sd}{sd}

\DeclareMathOperator{\range}{range}

\DeclareMathOperator{\dist}{dist}

\DeclareMathOperator{\trace}{{trace}}

\DeclareMathOperator{\relint}{{relint}}

\DeclareMathOperator{\rank}{{rank}}
\DeclareMathOperator{\spanl}{{span}}


\newcommand{\nc}{\newcommand}
\nc{\arrow}{{\rm arrow\,}}
\nc{\Arrow}{{\rm Arrow\,}}
\nc{\BoDiag}{{\rm B^0Diag\,}}
\nc{\bodiag}{{\rm b^0diag\,}}

\nc{\Mm}{{\mathcal M}^{m} }
\nc{\Mmn}{{\mathcal M}^{mn} }
\nc{\Mnr}{{\mathcal M}_{nr} }
\nc{\Mnmr}{{\mathcal M}_{(n-1)r} }
\nc{\kwqqp}{Q{$^2$}P\,}
\nc{\kwqqps}{Q{$^2$}Ps}

\nc{\notinaho}{(X,S)\in \overline{AHO}(\A)}
\nc{\inaho}{(X,S)\in AHO(\A)}

\newcommand{\bea}{\begin{eqnarray}}%
\newcommand{\eea}{\end{eqnarray}}%
\newcommand{\beas}{\begin{eqnarray*}}%
\newcommand{\eeas}{\end{eqnarray*}}%
%
%

%
%
%
%
%
%
%
%
%
%
\renewcommand{\F}{\mathcal{F}}%
%
%
%
%
%
%
%
%
%
{}



\newcommand{\Hnp}[1][]{\,\mathbb{H}_+^{\ifthenelse{\equal{#1}{}}{n}{#1}}}
\newcommand{\Hn}[1][]{\,\mathbb{H}^{\ifthenelse{\equal{#1}{}}{n}{#1}}}
\newcommand{\Dn}[1][]{\,\mathbb{D}^{\ifthenelse{\equal{#1}{}}{n}{#1}}}



\newcommand{\ba}{b(\alpha) }

\newcommand{\Fa}{\F(\alpha)}


\begin{document}

\title{
Error Bounds and Singularity Degree in Semidefinite Programming
}
             \author{
\href{https://uwaterloo.ca/combinatorics-and-optimization/about/people/ssremac}{Stefan
Sremac}\thanks{Department of Combinatorics and Optimization
        Faculty of Mathematics, University of Waterloo, Waterloo, Ontario, Canada N2L 3G1; Research supported by The Natural Sciences and Engineering Research Council of Canada and by AFOSR.
}
\and
\href{http://www.math.drexel.edu/~hugo/}
{Hugo J. Woerdeman}\thanks{Department of Mathematics, Drexel University, 
3141 Chestnut Street, Philadelphia, PA 19104, USA.  Research supported by 
Simons Foundation grant 355645.
}
\and
\href{http://www.math.uwaterloo.ca/~hwolkowi/}
{Henry Wolkowicz}%
        \thanks{Department of Combinatorics and Optimization
        Faculty of Mathematics, University of Waterloo, Waterloo, Ontario, Canada N2L 3G1; Research supported by The Natural Sciences and Engineering Research Council of Canada and by AFOSR; 
\url{www.math.uwaterloo.ca/\~hwolkowi}.
}
}
          \maketitle



\vspace{-.4in}

\begin{abstract}
In semidefinite programming a proposed optimal solution may be quite
poor in spite of having sufficiently small residual in the
optimality conditions.  This issue may be framed in terms of the discrepancy between
\emph{forward error} (the unmeasurable `true error') and \emph{backward
error} (the measurable violation of optimality conditions). 
In~\cite{S98lmi}, Sturm provided an upper bound on forward error in terms of backward error and \emph{singularity degree}.  In this work we provide a method to bound the maximum rank over all solutions and use this result to obtain a lower bound on forward error for a class of convergent sequences.  This lower bound complements the upper bound of Sturm.  The results of Sturm imply that semidefinite programs with slow convergence necessarily have large singularity degree.  Here we show that large singularity degree is, in some sense, also a sufficient condition for slow convergence for a family of external-type `central' paths.  Our results are supported by numerical observations.

\end{abstract}

{\bf Keywords:}
Semidefinite programming, SDP, facial reduction, singularity degree,
maximizing $\log \det$.

{\bf AMS subject classifications:} 
 90C22, 90C25


\section{Introduction}
\label{sec:intro}

It is well known that for certain pathological instances of semidefinite
programming, state-of-the-art algorithms, while theoretically guaranteed
to converge to a solution, do so very slowly or can fail to converge entirely.  
This issue is exacerbated in that it is generally undetectable.  In this
paper we propose a method to detect this type of slow convergence by lower bounding \emph{forward error}, i.e.,~distance to the solution set.  This bound is obtained by analyzing a class of parametric curves  that are proven to converge to a solution of maximum rank and then upper bounding that rank.  In the second part of the paper we present a new analysis of the relation between forward error and \emph{singularity degree}, a measure introduced by Sturm in~\cite{S98lmi} and shown to be a necessary condition for slow convergence.  Our results indicate that large singularity degree is, in some sense, also a sufficient condition for slow convergence for a certain family of central paths.

\index{SDP, semidefinite program}
\index{semidefinite program, SDP}
\index{$\Sn$, Euclidean space of $n\times n$ symmetric matrices}
\index{Euclidean space of $n\times n$ symmetric matrices, $\ \Sn$}
\index{$\Snp$, set of positive semidefinite matrices}
\index{set of positive semidefinite matrices, $\Snp$}
\index{$\F$, spectrahedron}
\index{spectrahedron, $\F$}
To be more specific about the type of slow convergence we are concerned with, let ${\F \subset \Sn}$ be the solution set of a \emph{semidefinite program (SDP)}.  Throughout this paper we refer to $\F$ as a \emph{spectrahedron}.  Here $\Sn$ denotes the ambient space of $n\times n$ symmetric matrices.  It is always possible to express $\F$ as the intersection of an affine subspace, $\LL$, and the set of positive semidefinite matrices, $\Snp$.  Given a matrix $X \in \Sn$, the \emph{forward error} is defined as,
\index{$\epsilon^f$, forward error}
\index{forward error, $\epsilon^f$}
\begin{equation}
\label{eq:forward}
\epsilon^f(X,\F) := \dist(X, \F).
\end{equation}
We cannot expect to measure forward error accurately without substantial knowledge of $\F$.  For this reason forward error is generally unknown.  What is readily available to users is \emph{backward error}, 
\index{backward error, $\epsilon^b$}
\index{$\epsilon^b$, backward error}
\begin{equation}
\label{eq:backward}
\begin{split}
\epsilon^b(X,\F) &:= \dist(X, \LL) + \dist(X,\Snp).
\end{split}
\end{equation}
In backward error we recognize that $\F$ is the intersection of two sets with easily computable forward errors.  Backward error serves as a proxy for the unknown forward error.  The type of slow convergence we are concerned with is when \emph{backward error is sufficiently small but forward error is much larger}. The problem with this scenario is not just the poor quality of the proposed solution.  More than this, it is the lack of awareness of a poor solution.

To demonstrate the discrepancy between forward error and backward error, we consider an SDP, with $n=5$, from the family introduced in~\cite{MR2724357}.  The output of \texttt{cvx} using the solver \texttt{SDPT3}
is,
\index{$\lambda(X)$, vector of eigenvalues of $X$}
\index{vector of eigenvalues of $X$, $\lambda(X)$}
\[
X \approx \begin{bmatrix}
0.94         &   0  & 0.028  & 0.001 & 2.3\times 10^{-6}\\
            0   & 0.057         &   0    &        0    &        0 \\
   0.028     &       0 &  0.028 &  4.1\times 10^{-5} &  6.5\times 10^{-8}\\
   0.001      &      0 &  4.1\times 10^{-5} &  4.5\times 10^{-6} &  3.1\times 10^{-9}\\
   2.3\times 10^{-6}      &      0  & 6.5\times 10^{-8} &  3.1\times 10^{-9}    &        0
\end{bmatrix}, \ \lambda(X) \approx \begin{pmatrix}
0.94 \\
   0.057\\
   1.9\times 10^{-3}\\
   2.4\times 10^{-6}\\
  -5.4\times 10^{-12}
\end{pmatrix},
\]
where $\lambda(X)$ is the vector of eigenvalues of $X$.  Similar results were obtained with the solvers \texttt{SeDuMi} and \texttt{MOSEK}.  The backward error for $X$ is quite small at~${5.46\times 10^{-12}}$ and \texttt{cvx} output states that the problem is ``solved''.  All indicators point to a `good' solution. However, the solution set of the SDP is a singleton consisting of the matrix with $1$ in the upper left entry and zeros everywhere else.  Given this information, $X$ does \emph{not} look like a very good solution.  Indeed, forward error is ${9.15\times 10^{-2}}$, unacceptably large.  Moreover, the eigenvalues of $X$ hardly indicate that the solution is a rank one matrix.  In the numerical case studies of Section~\ref{sec:numerics}, we show that our lower bound on forward error is significantly greater than backward error.  Therefore, it serves as an alarm that the proposed solution is not as accurate as it appears to be.
  
In~\cite{S98lmi}, Sturm defined singularity degree as the fewest number of iterations required in the \emph{facial reduction algorithm}, a regularization scheme for conic optimization introduced in~\cite{bw1,bw2,bw3}.  The singularity degree of a spectrahedron $\F$, denoted $\sd(\F)$, is an integer between $0$ and $n-1$.  Sturm showed that forward error is bounded in terms of backward error and singularity degree,
\index{$\sd(\F)$, singularity degree of $\F$}
\index{singularity degree of $\F$, $\sd(\F)$}
\begin{equation}
\label{eq:sturmbound}
\epsilon^f(X,\F) = \OO\left(\epsilon^b(X,\F)^{2^{-\sd(\F))}}\right).
\end{equation}
In particular, this bound implies that large singularity degree is a necessary condition for large forward error.  Equivalently, small singularity degree implies small forward error.  It is exactly this relation that has motivated our study of singularity degree.

The challenge with singularity degree is that, like forward error, it is unknown in most cases.  In~\cite{ScTuWonumeric:07} it is shown that the facial reduction algorithm is stable when singularity degree is $0$ or $1$, but the authors were not able to show that stability holds for larger singularity degree.  Moreover, the empirical evidence we have obtained indicates a lack of stability of the algorithm when singularity degree is greater than $1$.  For this reason, we view singularity degree as intractable for general instances of SDP.  Here we obtain a lower bound on singularity degree as a consequence of the upper bound on maximum rank.

Our final contribution in this work is showing that singularity degree is also a sufficient measure, in some sense, for large forward error.  We prove that for a class of central paths the eigenvalues that vanish, do so at a `fast rate' if, and only if, singularity degree is at most $1$.  We also prove that among the elements of the dual path that converge to $0$, there are at least $\sd(\F)$ different rates of convergence.

The paper is organized as follows.  In Section~\ref{sec:notback} we introduce our notation and basic concepts pertaining to facial reduction and singularity degree.  The bounds on maximum rank, forward error, and singularity degree are presented in Section~\ref{sec:boundrankferror} and in Section~\ref{sec:boundsd} we present results that support the notion that singularity degree is a measure of hardness.  The paper is concluded with numerical observations in Section~\ref{sec:numerics}.
\section{Notation and Background}
\label{sec:notback}
Throughout this paper the ambient space is the Euclidean space of
$n\times n$ real symmetric matrices, denoted $\Sn$, with the standard trace inner product,
$\langle X,Y \rangle := \trace(XY)$,
and the induced Frobenius norm,
$\lVert X \rVert_F := \sqrt{\langle X, X\rangle }$.
\index{$\lVert \cdot \rVert_F$, Frobenius norm}
\index{Frobenius norm, $\lVert \cdot \rVert_F$}
\index{$\langle \cdot, \cdot \rangle$, trace inner product on $\Sn$}

The eigenvalues of any $X \in \Sn$ are real and ordered so as to satisfy,
$\lambda_1(X)  \ge \cdots \ge \lambda_n(X)$,
and $\lambda(X) \in \Rn$ is the vector consisting of all the eigenvalues.  
In terms of this notation we have $\lVert X\rVert_F = \lVert \lambda(X) \rVert_2$, where $\lVert \cdot \rVert_2$ is the Euclidian norm when the argument is a vector in $\Rn$.  When the argument to $\lVert \cdot \rVert_2$ is a symmetric matrix then we mean the operator 2-norm, defined as
$\lVert X \rVert_2 := \max_i \lvert \lambda_i(X) \rvert$.  In some of our discussion we use the notation~${\lambda_{\max}(X) = \lambda_1(X)}$ and~${\lambda_{\min}(X) = \lambda_n(X)}$ if we are not concerned with the dimensions of the matrix, or wish to stress the minimality and maximality of the values.

The set of positive semidefinite
matrices, $\Snp$, is a closed convex cone in $\Sn$, with
interior consisting of the positive definite matrices,
$\Snpp$.  The cone $\Snp$ induces the \textdef{L\"owner partial order} on $\Sn$.
That is, for $X,Y \in \Sn$ we write $X\succeq Y$ when $X-Y \in \Snp$ and similarly $X\succ Y$ when~${X-Y \in \Snpp}$.
\index{$\lambda_i(X)$, the $i$th largest eigenvalue of $X$}

\subsection{Facial Reduction for SDPs}
\label{sec:FR}
To introduce facial reduction, we begin with a brief discussion of the faces of $\Snp$.  For further reading and proofs of some of our claims, we suggest~\cite{SaVaWo:97,MR2724357,con:70}.  A \textdef{face} of $\Snp$, say $f$, is a convex subset of $\Snp$ such that,
\[
X,Y \in \Snp, \ X+Y \in f \ \implies \ X,Y \in f.
\]
For a face $f$ there exists ${r \in \{0,\dotso,n\}}$, ${W \in \Snp}$, and ${V \in \R^{n\times r}}$ such that,
\begin{equation}
\label{eq:facechar}
WV=0, \ W+VV^T \succ 0, \ f = V\Srp V^T = \Snp \cap W^{\perp}.
\end{equation}
When the matrix $W$ in \eqref{eq:facechar} is not the $0$ matrix, it is referred to as an \textdef{exposing vector} for $f$.

\index{$\face(C)$, minimial face of $\Snp$ containing $C$}
\index{minimial face of $\Snp$ containing $C$, $\face(C)$}
An important notion regarding SDPs is that of \emph{minimal face}.  The minimal face of $\Snp$ containing a convex set $C$, denoted $\face(C)$, is the intersection of all faces of $\Snp$ that contain $C$.  If the minimal face for an SDP is known, then the SDP may be transformed into an equivalent SDP for which the \emph{Slater condition} -- strict feasibility with respect to the positive semidefinite constraint -- holds.  See the survey~\cite{DrusWolk:16} or~\cite{MR3108446,permfribergandersen,perm,MR3063940} for further reading on regularization of SDPs via facial reduction. 

For SDPs with special structure, the minimal face may be obtained through theoretical analysis.  Alternatively, the minimal face may be obtained using the \textdef{facial reduction algorithm}, introduced in~\cite{bw1,bw2,bw3}.  This algorithm generates a sequence of faces $f^1, \cdots ,f^d$ satisfying,
\[
f^1 \supsetneq \cdots \supsetneq f^d, \quad f^d = \face(\F).
\]
Equivalently, using the two characterizations of faces in \eqref{eq:facechar}, the algorithm generates a sequence of matrices ${W^1,\dotso,W^d}$, a sequence of decreasing positive integers ${r_1,\dotso,r_d}$, and a sequence of matrices ${V^k \in \R^{n\times r_k}}$ with ${k \in \{1,\dotso,d\}}$ such that,
\[
f^k = V^k \Sc^{r_k}_+ \left(V^k \right)^T = \Snp \cap \left(W^k\right)^{\perp}, \ k \in \{1,\dotso,d\}.
\]

The facial reduction algorithm depends on the algebraic representation of the solution set of an SDP.  As stated earlier, such a set is the intersection of an affine subspace and $\Snp$.  We assume that the affine subspace is defined in terms of a linear map $\A : \Sn \to \Rm$ and a vector $b \in \Rm$ so that,
\begin{equation}
\label{eq:FAb}
\F = \F(\A,b) = \{X\in \Snp : \A(X) = b\}
\end{equation}
The notation $\F(\A,b)$ stresses the dependence on the algebraic representation of the the affine subspace.  The facial reduction algorithm relies on the following theorem of the alternative.
\begin{fact}
\label{fact:thmalt}
Let $\F = \F(\A,b)$ be defined as in \eqref{eq:FAb} and nonempty.  Then exactly one of the following holds:
\begin{enumerate}[$(i)$]
\item $\F \cap \Snpp \ne \emptyset$, 
\label{itm:thmalt1}
\item there exists nonzero $W =\A^*(y)$ with $y^Tb = 0$.
\label{itm:thmalt2}
\end{enumerate}
\end{fact}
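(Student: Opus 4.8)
The statement is the classical theorem of the alternative underlying facial reduction, so I would treat its two halves separately: the implication that \emph{at most one} of $(i),(ii)$ holds is a one-line inner-product computation, while the substantive claim that \emph{at least one} holds will follow from strong duality applied to an auxiliary eigenvalue-maximization SDP. (Here I read $(ii)$ as: there is a nonzero $W=\A^*(y)\succeq 0$ with $y^Tb=0$, i.e.\ $W$ is an exposing vector in the sense of \eqref{eq:facechar}.)

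\textbf{At most one holds.} Suppose, for contradiction, that $X \in \F \cap \Snpp$ and simultaneously $0 \neq W = \A^*(y)$ with $W \succeq 0$ and $y^Tb = 0$. Then
\[
\langle W, X \rangle = \langle \A^*(y), X \rangle = \langle y, \A(X) \rangle = \langle y, b \rangle = 0 ,
\]
whereas $\langle W, X\rangle > 0$, because the trace inner product of a nonzero element of $\Snp$ with an element of $\Snpp$ is strictly positive: writing $X = \sum_i \mu_i v_iv_i^T$ with all $\mu_i > 0$, the relation $\langle W, X\rangle = \sum_i \mu_i v_i^TWv_i = 0$ forces $Wv_i = 0$ for every $i$, hence $W = 0$, a contradiction.

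\textbf{At least one holds.} Assume $\F \cap \Snpp = \emptyset$; the task is to produce $W$ as in $(ii)$. I would introduce the auxiliary SDP
\[
p^\ast := \sup\,\{\, t : X - tI \succeq 0,\ \A(X) = b \,\},
\]
with variables $X \in \Sn$, $t \in \R$. Since $\F$ is nonempty but contains no positive definite matrix, every $X \in \F$ has $\lambda_{\min}(X) = 0$; hence every feasible pair obeys $t \le \lambda_{\min}(X) = 0$, and $t = 0$ is achieved at any $\bar X \in \F$, so $p^\ast = 0$ is finite. The problem is also strictly feasible, since $(\bar X, -1)$ gives $\bar X + I = \bar X-(-1)I \succ 0$ and $\A(\bar X) = b$. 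Forming the Lagrangian with multipliers $y \in \R^m$ for $\A(X)=b$ and $Z \succeq 0$ for $X - tI \succeq 0$, stationarity in $t$ and in $X$ forces $\trace Z = 1$ and $Z = \A^*(y)$, so the dual of this SDP is
\[
d^\ast := \min\,\{\, \langle b, y\rangle : \A^*(y) \succeq 0,\ \trace \A^*(y) = 1 \,\} ,
\]
and by conic strong duality — valid because the primal is strictly feasible with finite value — we have $d^\ast = p^\ast = 0$ with the dual minimum attained, say at $\bar y$. Setting $W := \A^*(\bar y)$ yields $W \succeq 0$, $\trace W = 1$ (so $W \neq 0$), and $y^Tb = \langle b, \bar y\rangle = 0$, which is alternative $(ii)$.

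\textbf{Main obstacle.} There is essentially no computation; the delicate points are the two hypotheses needed to invoke conic strong duality. Strict feasibility of the auxiliary primal is exactly what the shift-by-$I$ construction about a point of $\F$ delivers — this is where nonemptiness of $\F$ is used — and finiteness of $p^\ast$ is precisely where the assumed failure of the Slater condition enters, pinning $p^\ast$ at $0$. One also has to ensure the exposing vector is nonzero, which the normalization $\trace\A^*(y) = 1$ built into the dual guarantees. An alternative route would separate $b$ from the convex set $\A(\Snpp) \subseteq \R^m$ and pass to closures, but routing through the auxiliary SDP sidesteps the bookkeeping around the non-closedness of $\A(\Snpp)$.
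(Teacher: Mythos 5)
The paper does not actually include a proof of this fact---it simply cites the survey of Drusvyatskiy and Wolkowicz---so there is no ``paper route'' to compare against. Your proof is correct and self-contained. You are also right to flag that alternative~$(ii)$ as printed is missing the condition $W \succeq 0$: without it the dichotomy is false (indeed ``at most one'' would fail), and the surrounding text and Algorithm~\ref{algo:FR} make clear that $W \succeq 0$ is intended. The ``at most one'' half is the standard inner-product computation and is fine. For ``at least one,'' routing through the auxiliary eigenvalue program $\sup\{t : X - tI \succeq 0,\ \A(X)=b\}$ is a clean and standard device; the Slater point $(\bar X, -1)$ with $\bar X \in \F$ and the boundedness $p^\ast = 0$ are exactly the two hypotheses needed to invoke conic strong duality with dual attainment, and the normalization $\trace \A^*(y) = 1$ built into the dual guarantees the resulting exposing vector is nonzero. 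This is a genuinely different (and arguably tidier) route than the separation-and-relative-interior argument used in the Borwein--Wolkowicz lineage, precisely because it avoids the non-closedness of $\A(\Snpp)$, as you note.

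One small imprecision worth fixing: you write ``every feasible pair obeys $t \le \lambda_{\min}(X) = 0$,'' but a feasible $(X,t)$ need not have $X \succeq 0$, so $\lambda_{\min}(X) = 0$ does not follow. The correct one-liner is: if $(X,t)$ is feasible with $t > 0$, then $X \succeq tI \succ 0$ and $\A(X) = b$, so $X \in \F \cap \Snpp$, contradicting the standing assumption; hence $t \le 0$ for every feasible pair, while $t=0$ is attained at any $\bar X \in \F$, so $p^\ast = 0$. This is a cosmetic fix, not a gap---the rest of the argument goes through unchanged.
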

A proof of this result may be found in \cite{DrusWolk:16}, for instance.

Now let us briefly describe how this result may be used for facial reduction.  Recall that the purpose of the facial reduction algorithm is to create an equivalent representation of $\F$ so that the Slater condition holds.  Now if Fact~\ref{fact:thmalt}~\ref{itm:thmalt1} holds, then $\F$ has a Slater point and we are done.  On the other hand if Fact~\ref{fact:thmalt}~\ref{itm:thmalt2} holds, then it can be verified that the matrix $W$ is an exposing vector for a face containing $\face(\F)$.  Indeed, the definition of $W$ implies that $\langle X,W\rangle = 0$ for all~${X \in \F}$.  Letting $r$ be the nullity of $W$ and choosing $V$ so as to satisfy the properties in \eqref{eq:facechar}, we see that for all $X \in \F$ we have $X \in V\Srp V^T$.  It follows that,
\begin{align*}
\F &= \{ VRV^T \in \Snp : \A(VRV^T) = b\} \\
&= V\{R \in \Srp : \A(VRV^T) = b \}V^T \\
&= V\F \left(\A(V\cdot V^T),b \right)V^T.
\end{align*}
In the next iteration of the algorithm we apply Fact~\ref{fact:thmalt} to the spectrahedron $\F \left( \A(V\cdot V^T), b \right)$.  We continue in this way until eventually Fact~\ref{fact:thmalt}~\ref{itm:thmalt1} -- the Slater condition -- holds.  We have included Algorithm~\ref{algo:FR} as a more rigorous description of facial reduction.

\begin{algorithm}[h!]
\caption{Facial Reduction}
\label{algo:FR}
\begin{algorithmic}[1]
\STATE \textbf{INPUT:} $\A$, $b$.
\STATE \textbf{initialize:} $k=0$, $\A^k = \A$, $V^k = I$, $W^k = 0$, $r_k = n$, $q_k = 0$.
\WHILE {$\F(\A^k,b) \cap \Sc^{r_k}_{++} = \emptyset$} 
\STATE obtain nonzero $Z^{k+1} = \left( \A^k \right)^*(y^{k+1}) \succeq 0$ such that $\left(y^{k+1}\right)^Tb = 0$ and orthogonal $\begin{bmatrix} Q_1^{k+1} & Q_2^{k+1} \end{bmatrix}$ such that,
\[
Z^{k+1} = \begin{bmatrix}
Q_1^{k+1} & Q_2^{k+1}
\end{bmatrix}\begin{bmatrix}
\Lambda^{k+1} & 0 \\
0 & 0
\end{bmatrix}\begin{bmatrix}
Q_1^{k+1} & Q_2^{k+1}
\end{bmatrix}, 
\]
where $\Lambda^{k+1} \succ 0$ and $Q_1^{k+1} \in \R^{r_k \times q_{k+1}}$.
\IF {$Z^{k+1}\succ 0$}  
\STATE  $Q_2^{k+1} = 0 \in \Sc^{r_k}$, $V^{k+1} = 0 \in \Sn$, and $r_{k+1} = 0$
\ELSE
\STATE $Q_2^{k+1} \in \R^{r_k\times r_{k+1}}$ and $V^{k+1} = V^kQ_2^{k+1} \in \R^{n\times r_{k+1}}$
\ENDIF
\STATE $W^{k+1} = W^k + V^kZ^{k+1}\left(V^k\right)^T \in \Snp$
\STATE $\A^{k+1} = \A \left(V^{k+1} \cdot \left( V^{k+1} \right)^T \right)$
\STATE $k = k+1$
\ENDWHILE
\STATE \textbf{OUTPUT:} $d=k$, $V=V^k$, $W=W^k$, $r=r_d$.
\end{algorithmic}
\end{algorithm}

At each iteration, the order of the matrices is reduced by at least one, implying that the algorithm terminates in at most $n$ iterations.  In fact, the upper bound is actually $n-1$ since in the case $\F=\{0\}$ it can be shown that the algorithm terminates in exactly $1$ iteration.  

\subsection{Singularity Degree and the Bounds of Sturm}
\label{sec:sdandSturm}

The number of iterations required by the facial reduction algorithm of the previous section is dependent on the choice of exposing vector, $Z^{k+1}$, obtained at each iteration.  When the exposing vector is chosen to have maximum rank, the number of iterations generated by Algorithm~\ref{algo:FR} is defined as the \textdef{singularity degree}, and denoted $\sd(\F)$.  It can be shown that singularity degree is the least number of iterations required by the algorithm.  

Two special cases deserve mention.  The first case is that of $\sd(\F) = 0$.  This case occurs if, and only if, the solution set satisfies the Slater condition.  The second special case is when~${\F = \{0\}}$.  Here our definition of singularity degree does not coincide with that of Sturm.  By our definition we have $\sd(\F) = 1$, since exactly one iteration of the algorithm is required to obtain the Slater condition.  On the other hand, Sturm defines $\sd(\F) = 0$ for this case, based on his error bounds.  That is, the worst case error bounds when $\F=\{0\}$ are the same as when $\F$ satisfies the Slater condition. 

We now state the error bounds of Sturm.  For a proof, see Theorem~3.3 of~\cite{S98lmi}.
\begin{fact}
\label{fact:sturmbound}
Let $\F = \F(\A,b)$ be a nonempty spectrahedron and let $\{X(\alpha) : \alpha > 0\}$ be a sequence where $\lVert X(\alpha) \rVert$ is bounded. Then,
\[
\epsilon^f(\Xa, \F) = \begin{cases}
\OO \left( \epsilon^b(\Xa, \F) \right) \quad & \text{if } \F = \{0\}, \\ 
\OO \left( \epsilon^b(\Xa, \F)^{2^{-\sd(\F)}} \right) & \text{otherwise.}
\end{cases}
\]
\end{fact}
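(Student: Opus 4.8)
Since this statement is quoted from Sturm (the paper's only ``proof'' being the citation to~\cite{S98lmi}), I sketch the argument I would reconstruct. I would proceed by induction on the singularity degree $d=\sd(\F)$, the point being that the error exponent gets \emph{halved} at each facial reduction step, which is exactly what produces the tower $2^{-d}$. For the base case $d=0$ I would establish a \emph{Lipschitz} error bound $\dist(X,\F)=\OO(\epsilon^b(X,\F))$ valid for $X$ in a bounded set; this is the regularity of the intersection of the affine subspace $\LL$ with the cone $\Snp$ under the Slater condition, and follows from a Hoffman/Robinson-type argument, with boundedness of the sequence $\Xa$ ensuring a uniform constant. The case $\F=\{0\}$ I would handle separately by compactness: if $\|X_i\|$ is bounded and $\dist(X_i,\LL)+\dist(X_i,\Snp)\to 0$, then every cluster point lies in $\LL\cap\Snp=\{0\}$, which after rescaling gives $\|X\|=\OO(\epsilon^b(X,\F))$ --- matching the exponent $2^{-0}=1$ and explaining why $\F=\{0\}$ is listed on its own line (recall Sturm's convention sets $\sd(\{0\})=0$).

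For the inductive step, assume $d\ge 1$ and the bound for all spectrahedra of singularity degree $d-1$. I would run one iteration of Algorithm~\ref{algo:FR} with a \emph{maximum-rank} exposing vector $W=\A^*(y)$, $\langle y,b\rangle=0$, $W\succeq0$, obtaining the face $f^1=\Snp\cap W^{\perp}=V\Sc^{r_1}_+V^T$ (columns of $V$ orthonormal) and the reduced spectrahedron $\F_1:=\F(\A(V\cdot V^T),b)\subseteq\Sc^{r_1}$, for which $\F=V\F_1V^T$ and $\sd(\F_1)=d-1$. Writing $\epsilon=\epsilon^b(\Xa,\F)$ and letting $X_+$ be the projection of $\Xa$ onto $\Snp$ (so $\|\Xa-X_+\|\le\epsilon$), the fact that $\langle W,\cdot\rangle$ vanishes identically on $\LL$ gives $0\le\langle W,X_+\rangle=\OO(\epsilon)$ after controlling the two errors $\dist(\Xa,\LL)$ and $\|\Xa-X_+\|$.

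The crux is the exponent-halving estimate. I would block $X_+$ as $\begin{bmatrix}A&B\\B^T&C\end{bmatrix}$ relative to $\range V\oplus\range W$; then $\langle W,X_+\rangle=\OO(\epsilon)$ together with positive definiteness of $W$ on $\range W$ forces $\|C\|=\OO(\epsilon)$, and $X_+\succeq0$ forces $\|B\|=\OO(\sqrt{\|C\|})=\OO(\sqrt\epsilon)$ (using boundedness of $\|A\|$). Hence $\dist(X_+,f^1)=\OO(\sqrt\epsilon)$, so the projection $\tilde X=VRV^T\in f^1$ of $X_+$ onto $f^1$ satisfies $\|\Xa-\tilde X\|=\OO(\sqrt\epsilon)$ and, as $\A$ is Lipschitz, $\|\A(VRV^T)-b\|=\OO(\sqrt\epsilon)$; a Hoffman bound for the affine system defining $\F_1$ then gives $\epsilon^b(R,\F_1)=\OO(\epsilon^{1/2})$. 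Applying the inductive hypothesis to $\F_1$,
\[
\dist(R,\F_1)=\OO\big(\epsilon^b(R,\F_1)^{2^{-(d-1)}}\big)=\OO\big(\epsilon^{\,2^{-d}}\big),
\]
and picking $R^*\in\F_1$ attaining this distance, $VR^*V^T\in\F$, so $\dist(\Xa,\F)\le\|\Xa-\tilde X\|+\|V(R-R^*)V^T\|=\OO(\epsilon^{1/2})+\OO(\epsilon^{2^{-d}})=\OO(\epsilon^{2^{-d}})$ for small $\epsilon$, since $2^{-d}\le\tfrac12$. This closes the induction.

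The hard part is the exponent-halving estimate, and more precisely making it uniform: proving that a nearly positive semidefinite matrix whose pairing with the exposing vector is $\OO(\epsilon)$ lies within $\OO(\sqrt\epsilon)$ of the exposed face, with a constant uniform over the bounded region containing the $\Xa$, and then carrying this through the reduced problem --- verifying $\sd(\F_1)=d-1$ and that the Hoffman constant of the reduced affine system stays controlled across all $d$ nested reductions. That single square root is exactly what generates the $2^{-d}$ exponent, and the uniform bookkeeping of the error terms through the reductions is the technical heart of the proof.
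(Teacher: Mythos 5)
Your reconstruction matches Sturm's argument, which the paper does not reproduce --- it defers directly to Theorem~3.3 of~\cite{S98lmi}. The mechanism you identify is exactly his: one facial-reduction step with a maximum-rank exposing vector, the block estimate $\lVert B\rVert_2^2\le\lVert A\rVert_2\,\lVert C\rVert_2$ for a nearly positive semidefinite matrix (the single source of the square root that yields the tower $2^{-\sd(\F)}$), and induction on singularity degree, with $\F=\{0\}$ separated off for precisely the convention reason you note; you also correctly flag the uniform control of the Hoffman constants across the nested reductions as the technical burden that Sturm has to carry out in full.
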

In proving Fact~\ref{fact:sturmbound}, Sturm actually obtained the following more precise statement about the way in which $\Xa$ approaches $\F$.  
\begin{fact}
\label{fact:sturmblockbounds}
Let $\F=\F(\A,b)$ be a nonempty spectrahedron with ${\sd(\F) \ge 1}$ where ${\F \ne \{0\}}$ and let~${\{\Xa : \alpha > 0\}}$ be a sequence where $\epsilon^b(\Xa, \F) = \OO(\alpha)$.  For ${i \in \{1,\dotso,\sd(\F)\}}$, let $Z^i$ be a maximum rank exposing vector obtained as in Algorithm~\ref{algo:FR} and let ${q_i := \rank(Z^i)}$.  Let $\bar \alpha > 0$ be fixed.  Then there exists an orthogonal matrix $Q$ such that,
\[
\face(Q\F Q^T) = \begin{bmatrix}
\Srp & 0 \\
0 & 0
\end{bmatrix},
\]
and,
\[
Q\Xa Q^T = \begin{bmatrix}
X_0(\alpha) & *  & \cdots &* \\
*& X_1(\alpha) & & \\
\vdots & & \ddots &*  \\
*&* & *& X_{\sd(\F)}(\alpha)
\end{bmatrix},
\]
where $X_0(\alpha) \in \Sc^r$ and for all $i \in \{1,\dotso,\sd(\F)\}$ and $\alpha \in (0,\bar \alpha)$ it holds that,
\[
X_i(\alpha) \in \Sc^{q_i} \text{ and } \lVert X_i(\alpha) \rVert = \OO \left(\alpha^{\xi(i)} \right),
\]
where $\xi(i) := 2^{-(\sd(\F)-i)}$.
\end{fact}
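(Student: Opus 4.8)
The plan, along the lines of Sturm's proof of Theorem~3.3 in~\cite{S98lmi}, is to read both the matrix $Q$ and the block partition off a run of Algorithm~\ref{algo:FR} with maximum-rank exposing vectors, and then to bound each diagonal block by an induction over the facial reduction iterations, driven by the hypotheses $\A(\Xa) = b + \OO(\alpha)$ and $\Xa \succeq -\OO(\alpha)I$, both consequences of $\epsilon^b(\Xa,\F) = \OO(\alpha)$. Run the algorithm with maximum-rank choices; let $d = \sd(\F)$, let the exposing vectors produced at iterations $1,\dotso,d$ be $Z^1,\dotso,Z^d$ (each in its reduced space), and let $V^0 = I, V^1,\dotso,V^d$ be the accumulated bases, so that $\face(\F) = V^d\Srp(V^d)^T$ with $r = r_d$. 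The subspaces $\range(V^d)$ and $\range(V^{k-1}Q_1^k)$, $k = 1,\dotso,d$, are pairwise orthogonal by the nesting in the algorithm and together span $\Rn$, so there is an orthogonal $Q$ whose rows form an orthonormal basis adapted to this decomposition, in the order $\range(V^d)$, then the iteration-$d$ space $\range(V^{d-1}Q_1^d)$, then the iteration-$(d-1)$ space, and so on down to the iteration-$1$ space $\range(Q_1^1)$. Then the face block $X_0(\alpha)$ lies in $\Srp$ with $r = r_d$; the block indexed $i$ is the coordinate block of the directions removed at iteration $d+1-i$, hence has size $q_i := \rank(Z^{d+1-i})$ (this is the $Z^i$ of the statement, in reverse order of production, so $X_1$ corresponds to the last iteration and $X_d$ to the first), and $r + q_1 + \dotsb + q_d = n$. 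Since $X\mapsto QXQ^T$ is an automorphism of $\Snp$ preserving its face lattice, $\face(Q\F Q^T) = Q\face(\F)Q^T$ is the stated top-left block. From here I work in the $Q$-basis and write $X^{(ab)}(\alpha)$ for the $(a,b)$ block of $Q\Xa Q^T$.

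Three facts translate the hypotheses. (i) As $\dist(\Xa,\LL) = \OO(\alpha)$ and $\A, b, y^k$ are fixed, $\langle\Xa,\A^*(y^k)\rangle = \langle\A(\Xa),y^k\rangle = \langle b,y^k\rangle + \OO(\alpha) = \OO(\alpha)$, using $(y^k)^Tb = 0$. (ii) As $\dist(\Xa,\Snp) = \OO(\alpha)$ there is a fixed $c$ with $\Xa \succeq -c\alpha I$, so every principal submatrix of $Q\Xa Q^T$ is $\succeq -c\alpha I$, and the standard estimate for off-diagonal blocks of a positive semidefinite matrix gives $\|X^{(ab)}(\alpha)\| \le \bigl(\|X^{(aa)}(\alpha)\| + c\alpha\bigr)^{1/2}\bigl(\|X^{(bb)}(\alpha)\| + c\alpha\bigr)^{1/2}$. (iii) In the $Q$-basis the compression of $\A^*(y^k)$ to the iteration-$k$ reduced space $\range(V^{k-1})$ equals $Z^k$, which is supported on the iteration-$k$ block, where it equals some $\Lambda^k \succ 0$; consequently $\bigl(\A^*(y^k)\bigr)_{ab} = 0$ whenever $a$ and $b$ both belong to the face block or to blocks removed at iterations $k, k+1, \dotso, d$, the sole exception being the iteration-$k$ diagonal block.

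The core is an induction on $k = 1,\dotso,d$ showing that the iteration-$k$ block, denoted $X^{[k]}(\alpha)$, satisfies $\|X^{[k]}(\alpha)\| = \OO(\alpha^{2^{1-k}})$. Expanding $\langle\Xa,\A^*(y^k)\rangle = \OO(\alpha)$ via (iii), apart from $\langle X^{[k]}(\alpha),\Lambda^k\rangle$ every remaining term is $\langle X^{(ab)}(\alpha),(\A^*(y^k))_{ab}\rangle$ with at least one of $a,b$ a block removed at some earlier iteration $j \le k-1$; by (ii) and the inductive bound $\|X^{[j]}(\alpha)\| = \OO(\alpha^{2^{1-j}})$ such a term is $\OO(\alpha^{2^{1-j}/2}) = \OO(\alpha^{2^{-j}})$ (the other block contributing only an $\OO(1)$ factor), and the largest of these, at $j = k-1$, is $\OO(\alpha^{2^{1-k}})$ --- this is the crucial point that keeps the exponents from collapsing, since the iteration-$k$ exposing vector couples the newly exposed directions neither to the eventual face nor to the not-yet-removed directions, so the degradation enters only through already-controlled blocks. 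Hence $\langle X^{[k]}(\alpha),\Lambda^k\rangle = \OO(\alpha^{2^{1-k}})$, and because $X^{[k]}(\alpha) + c\alpha I \succeq 0$ with $\Lambda^k \succ 0$ fixed, $\trace\bigl(X^{[k]}(\alpha) + c\alpha I\bigr) = \OO(\alpha^{2^{1-k}})$ and so $\|X^{[k]}(\alpha)\| = \OO(\alpha^{2^{1-k}})$, the $c\alpha$ slack absorbed since $2^{1-k}\le 1$; the base case $k=1$ has no earlier blocks and gives $\langle X^{[1]}(\alpha),\Lambda^1\rangle = \OO(\alpha)$ directly. Since the iteration-$k$ block is $X_{d+1-k}(\alpha)$ in the statement's numbering, substituting $i = d+1-k$ yields $\|X_i(\alpha)\| = \OO\bigl(\alpha^{2^{1-(d+1-i)}}\bigr) = \OO\bigl(\alpha^{2^{i-d}}\bigr) = \OO\bigl(\alpha^{\xi(i)}\bigr)$, which is the claim.

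The main obstacle is fact (iii) together with the bookkeeping it enables: one must verify precisely which off-block entries of $\A^*(y^k)$ are forced to vanish --- exactly, that the directions newly exposed at iteration $k$ decouple inside $\A^*(y^k)$ from the eventual face and from every direction removed at a later iteration --- because this decoupling is what produces the doubling sequence $2^{1-k}$ rather than a single global exponent $2^{-d}$. The rest --- propagating the $\OO(\alpha)$ semidefiniteness slack through the principal-submatrix and trace estimates, and checking that the data returned by Algorithm~\ref{algo:FR} really does assemble into an orthogonal $Q$ realizing the stated partition --- is routine.
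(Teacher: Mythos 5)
The paper does not actually prove this statement: it is labeled a \emph{Fact}, a designation the paper reserves for quoted results, and the surrounding text explicitly attributes it to Sturm's proof of Theorem~3.3 in~\cite{S98lmi}. So there is no in-paper proof to compare against; the relevant question is whether your reconstruction of Sturm's argument is sound, and it is. The orthogonal decomposition of $\Rn$ into $\range(V^d)$ and the iteration-$k$ spaces $\range(V^{k-1}Q_1^k)$, the translation of the two pieces of backward error into $\langle\Xa,\A^*(y^k)\rangle=\OO(\alpha)$ and $\Xa\succeq -c\alpha I$, the structural fact that $\A^*(y^k)$ vanishes on $\range(V^{k-1})\times\range(V^{k-1})$ off the iteration-$k$ diagonal block (since $(V^{k-1})^T\A^*(y^k)V^{k-1}=Z^k=\mathrm{diag}(\Lambda^k,0)$ in the reduced coordinates), and the exponent-halving induction carried by the off-diagonal PSD block estimate plus a trace bound against $\Lambda^k\succ 0$ are precisely Sturm's ingredients. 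Two observations. First, you are right that the statement read literally, with $Z^i$ the iteration-$i$ exposing vector, is internally inconsistent: the slowest block, with $\xi(1)=2^{1-d}$, comes from the \emph{last} facial-reduction step and so has $\rank(Z^d)$ rather than $\rank(Z^1)$ rows; your reading of the statement's $Z^i$ as the algorithm's $Z^{d+1-i}$, realized by your chosen ordering of the $Q$-basis blocks, is what makes the claim coherent. Second, your off-diagonal and trace estimates implicitly use $\lVert\Xa\rVert=\OO(1)$, which the paper assumes in Fact~\ref{fact:sturmbound} but omits from the hypotheses here; it should be stated as an inherited assumption. Neither point is a gap in the argument itself.
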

\index{$\xi(\cdot)$, exponent function for bound of Sturm}
\index{exponent function for bound of Sturm, $\xi(\cdot)$}
This result shows that under the correct orthogonal transformation, the diagonal blocks of~$\Xa$ that converge to $0$ may do so at different rates.

\section{Bounds on Maximum Rank, Forward Error, and Singularity Degree}
\label{sec:boundrankferror}
We consider SDPs in the form,
\begin{equation}
\label{eq:sdp}
\begin{split}
p^* := \min \ & \langle C, X \rangle \\
\text{s.t.} \ & \widehat \A(X)=\widehat b \\
& X \succeq 0,
\end{split}
\end{equation}
where $\widehat \A : \Sn \to \R^{\widehat m}$ is a linear map, $\widehat b \in \R^{\widehat m}$, and $C \in \Sn$.  The solution set is,
\begin{equation}
\label{eq:solset}
\F = \{ X \in \Snp : \widehat \A(X)=\widehat b, \ \langle C,X \rangle = p^*\}.
\end{equation}
When \eqref{eq:sdp} is a feasibility problem then $C=0$ and $p^*=0$.
In this case we let $m:= \widehat m$, $\A: = \widehat \A$, and $b :=
\widehat b$ so that ${\F = \F(\A,b) = \F(\widehat \A, \widehat b)}$.  On
the other hand, when the objective of \eqref{eq:sdp} is non-trivial,
i.e.,~$C \ne 0$, then it may not be the case that ${\F =\F(\widehat \A, \widehat b)}$.  Thus we define $m := \widehat m + 1$ and define $\A$ and $b$ as,
\[
\A(X) := \begin{pmatrix}
\widehat \A(X) \\
\langle C, X \rangle
\end{pmatrix}, \ b := \begin{pmatrix}
\widehat b \\
p^*
\end{pmatrix}.
\]
Then $\F = \F(\A,b)$ is a spectrahedron in the notation we have already developed.
\begin{assump}
\label{assump:F}
Let $\F=\F(\A,b)$ be the spectrahedron consisting of the solutions to the SDP in \eqref{eq:sdp}.  We assume that $\F$ is nonempty, ${\sd(\F) \ge 1}$, and ${\F \ne \{0\}}$.
\end{assump}
The assumption that $\F$ is nonempty is for the purpose of defining singularity degree.  The other two assumptions ensure that there is a possibility of discrepancy between forward error and backward error.

Our analysis in this section is based on \emph{path-following} algorithms for SDP.  The foundation of such algorithms is the \textdef{central path}, a smooth parametric curve, say ${\{X(\alpha) : \alpha > 0\}}$, that is known to converge to a solution of the SDP, granted that a solution exists.  Specifically we mean that ${\Xa \to \bar X}$ as ${\alpha \searrow 0}$ and ${\bar X \in \F}$.  Then a path-following algorithm produces a sequence of positive numbers $\{\alpha_k\}$ and matrices $\{ X^k\}$ such that $\alpha_k$ is successively closer to $0$ and $X^k$ is a successively better approximation of $X(\alpha_k)$.  In other words, the iterates approach the solution set along a trajectory that approximates the central path.  

To obtain a method for bounding maximum rank and forward error, we study central paths that satisfy the following assumptions.
\begin{assump}
\label{assump:para}
Let $\{X(\alpha): \alpha > 0\}$ be a central path and let $\F=\F(\A,b)$ be the spectrahedron consisting of the solutions to the SDP in \eqref{eq:sdp}.  We assume that,
\begin{enumerate}[$(i)$]
\item there exists $\bar X \in \relint(\F)$ such that $\lim_{\alpha \searrow 0} \Xa = \bar X,$
\label{itm:1para}
\item $\Xa \succ 0$ for all $\alpha >0$.
\label{itm:2para}
\end{enumerate}
\end{assump}
Many of the well-known algorithms for SDP are based on central paths that satisfy this assumption.

\subsection{A Bound on Maximum Rank}
\label{sec:boundrank}

We provide two ways to bound maximum rank.  The first method is based on tracking the ratios,
\begin{equation}
\label{eq:m1}
\frac{\lambda_i(\Xa)}{\lambda_{i+1}(\Xa)}, \ i \in \{1,\dotso,n-1\}.
\end{equation}
The ratios that blow up indicate one of two scenarios.  The first scenario is that both eigenvalues converge to $0$, but $\lambda_{i+1}(\Xa)$ does so much more quickly.  The second is that $\lambda_i(\Xa)$ converges to a positive value and $\lambda_{i+1}(\Xa)$ vanishes.  This only happens when $i$ corresponds to the rank of the limit point $\bar X$.  Thus the smallest index $i$ for which the ratio blows up corresponds exactly to the maximum rank.  We state this observation formally in the following.
\begin{prop}
\label{prop:eig}
Let ${\{\Xa : \alpha > 0\}}$ be a central path satisfying Assumption~\ref{assump:para} for a spectrahedron $\F=\F(\A,b)$ satisfying Assumption~\ref{assump:F}.  Let $i$ be the smallest index such that the ratio in \eqref{eq:m1} blows up.  Then $i$ is the maximum rank over $\F$. 
\end{prop}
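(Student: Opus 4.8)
The plan is to exploit the convergence $X(\alpha) \to \bar X \in \relint(\F)$ together with continuity of eigenvalues. Since $\bar X \in \relint(\F)$, it has the maximum rank attained over $\F$; call this rank $\rho$. First I would record the elementary fact that the rank of any $X \in \F$ is at most $\rho$, and that $\rho$ is achieved precisely on $\relint(\F)$ — this follows from the facial characterization in \eqref{eq:facechar}, since $\face(\F) = V\Srp V^T$ with $r = \rho$, and relative interior points of $\F$ correspond to positive definite inner matrices $R \in \Srpp$.

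Next I would analyze the two families of eigenvalue ratios separately. By continuity of eigenvalues and $X(\alpha) \to \bar X$, we have $\lambda_j(X(\alpha)) \to \lambda_j(\bar X)$ for each $j$. For $j \le \rho$, $\lambda_j(\bar X) > 0$, so $\lambda_j(X(\alpha))$ is bounded away from $0$ for small $\alpha$; for $j > \rho$, $\lambda_j(\bar X) = 0$, so $\lambda_j(X(\alpha)) \searrow 0$. Consider the ratio at index $i = \rho$: the numerator $\lambda_\rho(X(\alpha)) \to \lambda_\rho(\bar X) > 0$ while the denominator $\lambda_{\rho+1}(X(\alpha)) \to 0$, hence this ratio blows up. (If $\rho = n$ there is no such index and, since $\Xa \succ 0$ forces all eigenvalues positive with positive limits, no ratio blows up, consistent with maximum rank $n$; I would handle this boundary case in a sentence.) Conversely, for $i < \rho$ both $\lambda_i(X(\alpha))$ and $\lambda_{i+1}(X(\alpha))$ converge to the strictly positive values $\lambda_i(\bar X), \lambda_{i+1}(\bar X)$, so the ratio \eqref{eq:m1} converges to the finite number $\lambda_i(\bar X)/\lambda_{i+1}(\bar X)$ and does not blow up. Therefore the smallest index at which \eqref{eq:m1} blows up is exactly $i = \rho$, the maximum rank over $\F$.

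The only genuine subtlety — and the step I would treat most carefully — is pinning down that $\bar X \in \relint(\F)$ really does have rank equal to $\max_{X \in \F} \rank(X)$. This is where Assumption~\ref{assump:para}\,\eqref{itm:1para} is essential: an arbitrary limit of a central path could land on the boundary of $\F$, but we have assumed it lands in the relative interior. Given that, the argument is: all of $\F$ lies in the face $V\Srp V^T$ (by definition of $\face(\F)$), so $\rank(X) \le r$ for all $X \in \F$; and $\bar X \in \relint(\F)$ means $\bar X = V R V^T$ with $R \succ 0$, giving $\rank(\bar X) = r$. Hence $r = \rho$ and $\bar X$ attains the maximum. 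Everything else is just continuity of eigenvalues (e.g.\ Weyl's inequality) and the observation that a ratio $a(\alpha)/b(\alpha)$ blows up iff the limit behavior is "positive over zero," which I would state without belaboring. I do not anticipate any real obstacle beyond making the relative-interior/rank link explicit.
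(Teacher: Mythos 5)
Your proof is correct and formalizes exactly the reasoning the paper sketches informally in the paragraph preceding the proposition (the paper supplies no formal proof of its own, treating the statement as an ``observation''). You correctly identify the two items the paper leaves implicit and makes them precise: that $\bar X\in\relint(\F)$ has rank equal to $\max_{X\in\F}\rank(X)$ (via $\relint(\F)\subseteq\relint(\face(\F))=V\Srpp V^T$), and that continuity of eigenvalues reduces the ratio analysis to whether $\lambda_j(\bar X)$ is positive or zero, so the first blown-up ratio occurs at $i=\rho$. One minor remark: the boundary case $\rho=n$ you flag cannot actually occur under Assumption~\ref{assump:F}, since $\sd(\F)\ge 1$ means the Slater condition fails, hence $\F$ contains no positive definite matrix and $\rho<n$; similarly $\F\neq\{0\}$ rules out $\rho=0$, so a smallest blown-up index is guaranteed to exist.
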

In practice, a ratio of the form in \eqref{eq:m1} may blow up slowly, so that it may appear to be bounded.  In such cases an approximation of the maximum rank may be obtained by the smallest index $i$ for which it is \emph{clear that the ratio blows up}.  

In addition to differentiating between eigenvalues that vanish and those that do not, the ratios or \eqref{eq:m1} also indicate the number of different rates of convergence among the eigenvalues that do converge to $0$.  We explore this further in Section~\ref{sec:sdhard}.  

The second method to bound maximum rank is more involved, but provides us with additional information on singularity degree.  Moreover, this method appears to be more reliable in our experiments.  The approach is to analyze the $Q$-convergence rates of the eigenvalues of $\Xa$.  We begin by translating Fact~\ref{fact:sturmblockbounds} into a statement about the eigenvalues of~$\Xa$.
\begin{lemma}
\label{lem:sturmeigbounds}
Let ${\{\Xa : \alpha > 0\}}$ be a central path satisfying Assumption~\ref{assump:para} for a spectrahedron $\F=\F(\A,b)$ that satisfies Assumption~\ref{assump:F}.  Assume further that ${\epsilon^b(\Xa, \F) = \OO(\alpha)}$.  For~${i \in \{1,\dotso,\sd(\F)\}}$, let~$Z^i$ be a maximum rank exposing vector obtained as in Algorithm~\ref{algo:FR} and let ${q_i := \rank(Z^i)}$.  Let $r$ denote the maximum rank over $\F$.  Let $\II^0,\II^1,\dotso, \II^{\sd(\F)}$ form a partition of $\{1,\dotso,n\}$ such that $\II^0 = \{1,\dotso,r\}$ and,
\[
\II^1 = r + \{1,\dotso,q^1\}, \ \II^2 = r+q^1 + \{1,\dotso,q^2\}, 
\]
and so on.  Then, for $j \in \{1,\dotso,n\}$ it holds that for sufficiently small $\alpha>0$,
\[
j \in \II^i \ \implies \ \lambda_j(\Xa) = \begin{cases}
\Theta(1) \quad &\text{if } i = 0, \\
\OO \left(\alpha^{\xi(i)}\right) &\text{otherwise,}
\end{cases}
\]
where $\xi(i) := 2^{-(\sd(\F)-i)}$.
\end{lemma}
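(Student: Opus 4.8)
The plan is to deduce the eigenvalue bounds from the block-structure statement of Fact~\ref{fact:sturmblockbounds} by invoking eigenvalue interlacing (Cauchy interlacing) between $Q\Xa Q^T$ and its trailing principal submatrices. First I would apply Fact~\ref{fact:sturmblockbounds} to the central path $\{\Xa\}$, which is legitimate since Assumption~\ref{assump:para} gives boundedness of $\lVert \Xa \rVert$ (it converges) and Assumption~\ref{assump:F} gives $\sd(\F)\ge 1$ and $\F\ne\{0\}$; the hypothesis $\epsilon^b(\Xa,\F)=\OO(\alpha)$ is exactly what Fact~\ref{fact:sturmblockbounds} requires. This produces an orthogonal $Q$ and, for a fixed $\bar\alpha>0$, the block decomposition of $Q\Xa Q^T$ with diagonal blocks $X_0(\alpha)\in\Sc^r,X_1(\alpha)\in\Sc^{q_1},\dotso,X_{\sd(\F)}(\alpha)\in\Sc^{q_{\sd(\F)}}$, where $\lVert X_i(\alpha)\rVert = \OO(\alpha^{\xi(i)})$ for $i\ge 1$ and $\xi(i)=2^{-(\sd(\F)-i)}$. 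Note that $r+q_1+\dotsb+q_{\sd(\F)}=n$, so the index sets $\II^0,\dotso,\II^{\sd(\F)}$ defined in the statement really do partition $\{1,\dotso,n\}$ and match the block sizes; this consistency should be remarked on (and is where the definition $\II^0=\{1,\dotso,r\}$ with $r$ the maximum rank is used).

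The core of the argument is the upper bound for $i\ge 1$. Fix $i\in\{1,\dotso,\sd(\F)\}$ and let $j\in\II^i$, so $j = r+q_1+\dotsb+q_{i-1}+\ell$ for some $\ell\in\{1,\dotso,q_i\}$. Consider the trailing principal submatrix $M_i(\alpha)$ of $Q\Xa Q^T$ obtained by deleting the first $r+q_1+\dotsb+q_{i-1}$ rows and columns; its size is $q_i+q_{i+1}+\dotsb+q_{\sd(\F)}$. By Cauchy interlacing, the $\ell$-th largest eigenvalue of $M_i(\alpha)$ is at least $\lambda_j(Q\Xa Q^T)=\lambda_j(\Xa)$, but since $\Xa\succ0$ (Assumption~\ref{assump:para}\,\ref{itm:2para}) all eigenvalues are positive, so in fact $\lambda_j(\Xa)\le \lambda_1(M_i(\alpha)) = \lVert M_i(\alpha)\rVert_2$. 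Now $M_i(\alpha)$ is itself block-structured with diagonal blocks $X_i(\alpha),\dotso,X_{\sd(\F)}(\alpha)$, and by the triangle inequality for the operator norm applied to $M_i$ viewed as a sum of its on- and off-diagonal blocks — or more simply by bounding each entry of $M_i(\alpha)$ via interlacing against $Q\Xa Q^T$ whose off-diagonal blocks are themselves $\OO(\alpha^{\xi(i)})$ (an off-diagonal block coupling block $i'$ and $i''$ with $i'\le i''$ has norm $\OO(\alpha^{\xi(i')})$ by the $2\times2$ PSD minor argument, and $\xi$ is increasing so the dominant scale in $M_i$ is $\alpha^{\xi(i)}$) — we get $\lVert M_i(\alpha)\rVert_2 = \OO(\alpha^{\xi(i)})$. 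Hence $\lambda_j(\Xa)=\OO(\alpha^{\xi(i)})$, as claimed.

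For the case $i=0$, i.e.\ $j\in\{1,\dotso,r\}$, I would argue $\lambda_j(\Xa)=\Theta(1)$ in two halves. The upper bound $\lambda_j(\Xa)=\OO(1)$ is immediate from boundedness of $\lVert\Xa\rVert$. For the lower bound, use $\Xa\to\bar X$ with $\bar X\in\relint(\F)$, so $\rank(\bar X)=r$ (the maximum rank over $\F$, since relative-interior points of a spectrahedron attain the maximal rank); by continuity of eigenvalues, $\lambda_j(\Xa)\to\lambda_j(\bar X)=:c_j>0$ for $j\le r$, whence $\lambda_j(\Xa)=\Theta(1)$.

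The main obstacle I anticipate is the bookkeeping in the second paragraph: carefully justifying that passing to the trailing principal submatrix $M_i(\alpha)$ kills the ``large'' blocks $X_0,\dotso,X_{i-1}$ (and their off-diagonal couplings) while retaining $\lambda_j(\Xa)$ as one of its eigenvalues, and then verifying that the surviving off-diagonal blocks do not spoil the rate $\alpha^{\xi(i)}$. This requires the elementary but slightly delicate fact that for a PSD matrix a $2\times2$ block with a small diagonal block forces the corresponding off-diagonal block to be small — concretely, if $\begin{bmatrix} A & B \\ B^T & C\end{bmatrix}\succeq 0$ then $\lVert B\rVert_2^2 \le \lVert A\rVert_2\,\lVert C\rVert_2$ — together with monotonicity of $\xi$. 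Everything else is interlacing and elementary asymptotics.
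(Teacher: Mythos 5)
Your proposal is correct and follows essentially the same route as the paper: invoke Fact~\ref{fact:sturmblockbounds}, pass to the trailing principal submatrix, use positive definiteness of that submatrix to control the off-diagonal coupling by the diagonal blocks, and finish with Cauchy interlacing; the $i=0$ case by continuity of eigenvalues at $\bar X\in\relint(\F)$ is also as in the paper. The only difference is cosmetic — you justify the norm bound on the trailing submatrix by the $2\times 2$ PSD minor inequality on the off-diagonal blocks, whereas the paper simply asserts $\lVert S(\alpha)\rVert=\OO(\max_\ell\lVert X_\ell(\alpha)\rVert)$ for the PSD principal submatrix (which can equally be seen via $\lambda_1(S)\le\trace(S)$), but these are the same use of positivity.
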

\begin{proof}
By assumption, $\Xa \to \bar X \in \relint(\F)$ and $\rank(\bar X) = r$.  Therefore for sufficiently small $\alpha > 0$, the $r$ largest eigenvalues of $\Xa$ are converging to positive numbers.  It follows that for sufficiently small $\alpha > 0$,
\[
j \in \II^0 \ \implies \ \lambda_j(\Xa) = \Theta(1),
\]
proving one part of the desired result.

Next, by Fact~\ref{fact:sturmblockbounds} there exists an orthogonal $Q$ such that,
\begin{equation}
\label{eq:1sturmeigbounds}
\face(Q\F Q^T) = \begin{bmatrix}
\Srp & 0 \\
0 & 0
\end{bmatrix} \text{ and } 
Q\Xa Q^T = \begin{bmatrix}
X_0(\alpha) & *  & \cdots &* \\
*& X_1(\alpha) & & \\
\vdots & & \ddots &*  \\
*&* & *& X_{\sd(\F)}(\alpha)
\end{bmatrix} \ \forall \alpha > 0,
\end{equation}
where $X_0(\alpha) \in \Sc^r$ and for all $i \in \{1,\dotso,\sd(\F)\}$ it holds that,
\begin{equation}
\label{eq:2sturmeigbounds}
X_i(\alpha) \in \Sc^{q_i} \text{ and } \lVert X_i(\alpha) \rVert = \OO \left(\alpha^{\xi(i)} \right).
\end{equation}
Now let $i \in \{1,\dotso,\sd(\F)\}$ and let $j \in \II^i$.  Consider the principal submatrix of $Q\Xa Q^T$,
\begin{equation}
\label{eq:3sturmeigbounds}
S(\alpha) := \begin{bmatrix}
X_i(\alpha) & & \\
  & \ddots &  \\
 & & X_{\sd(\F)}(\alpha)
\end{bmatrix}.
\end{equation}
By Assumption~\ref{assump:para}, $\Xa \succ 0$ implying that $S(\alpha) \succ 0$.  Thus by \eqref{eq:2sturmeigbounds} we have,
\[
\lVert S(\alpha) \rVert = \OO \left(\max_{\ell \in \{i,\dotso,\sd(\F)} \lVert X_{\ell}(\alpha) \rVert \right) = \OO \left(\alpha^{\xi(i)} \right).  
\]
It follows that $\lambda_1(S) = \OO \left(\alpha^{\xi(i)}\right)$.  Moreover, the interlacing eigenvalue theorem implies that,
\[
\lambda_j(\Xa) \le \lambda_1(S).
\]  
Combining these inequalities yields the desired result, $\lambda_j(\Xa) = \OO \left(\alpha^{\xi(i)}\right)$.
\end{proof}

Now that we have bounds on those eigenvalues of $\Xa$ that converge to $0$ we analyze their $Q$-convergence rates.  First a lemma.
\begin{lemma}
\label{lem:2sequence}
Let $\{a_k\}_{k\in \NN}$ and $\{b_k\}_{k\in \NN}$ be sequences of positive reals such that ${a_k \to 0}$ and ${b_k \to 0}$.  If ${a_k \le b_k}$ for all ${k \in \NN}$ then,
\begin{equation}
\label{eq:2sequence1}
\liminf_{k\to \infty} \frac{a_{k+1}}{a_k} \le \limsup_{k \to \infty} \frac{b_{k+1}}{b_k}.
\end{equation}
\end{lemma}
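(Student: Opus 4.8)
The plan is to argue by contradiction, turning the $\liminf$/$\limsup$ hypotheses into eventual one‑step inequalities and then exploiting a geometric blow‑up. Write $\ell := \liminf_{k\to\infty} a_{k+1}/a_k$ and $L := \limsup_{k\to\infty} b_{k+1}/b_k$. If $L = +\infty$ the inequality \eqref{eq:2sequence1} is trivial, so assume $L < \infty$; and if $\ell \le L$ there is nothing to prove, so suppose toward a contradiction that $\ell > L$. The key move is that we then have room to interpose \emph{two} constants $L < c_2 < c_1 < \ell$.

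Next I would extract eventual bounds. Since $\liminf_{k} a_{k+1}/a_k = \ell > c_1$, there is an index $N_1$ with $a_{k+1} > c_1 a_k$ for all $k \ge N_1$; since $\limsup_{k} b_{k+1}/b_k = L < c_2$, there is an index $N_2$ with $b_{k+1} < c_2 b_k$ for all $k \ge N_2$. Put $N := \max\{N_1, N_2\}$. Iterating the one‑step bounds (and using that all terms are positive, so $a_N, b_N > 0$) gives $a_k > c_1^{\,k-N} a_N$ and $b_k < c_2^{\,k-N} b_N$ for every $k > N$.

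Dividing, $a_k / b_k > (c_1/c_2)^{k-N} (a_N/b_N)$ for all $k > N$. Because $c_1/c_2 > 1$ and $a_N/b_N > 0$, the right‑hand side tends to $+\infty$, so $a_k > b_k$ for all sufficiently large $k$, contradicting the hypothesis $a_k \le b_k$. Hence $\ell \le L$, which is \eqref{eq:2sequence1}.

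I do not expect a real obstacle here: the argument is short and elementary. The only points needing a bit of care are dispatching the degenerate cases $L = +\infty$ and $\ell \le L$ at the outset, and correctly reading off the eventual one‑step inequalities (recalling that $\liminf x_k > c$ forces $x_k > c$ for all large $k$, and dually for $\limsup$). The one genuinely load‑bearing idea is to separate $\ell$ and $L$ by two constants $c_1 > c_2$ rather than one, so that the surviving geometric factor $(c_1/c_2)^{k-N}$ blows up and collides with $a_k \le b_k$.
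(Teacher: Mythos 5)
Your proof is correct and takes essentially the same approach as the paper: argue by contradiction, interpose two constants strictly between the liminf and the limsup, iterate the resulting eventual one-step inequalities into geometric bounds, and let the unbounded ratio $(c_1/c_2)^{k-N}$ collide with the hypothesis $a_k \le b_k$. The paper phrases the two intermediate constants as $L_a - \tau/3$ and $L_a - \tau/2$ for a slack $\tau$ with $L_a - \tau \ge L_b$, while you name them $c_1, c_2$ directly; the substance is identical.
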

\begin{proof}
Let $L_a$ and $L_b$ denote the limit inferior and limit superior of \eqref{eq:2sequence1}, respectively.  For simplicity we assume that $L_a$ and $L_b$ are finite, but the arguments extend to the general case trivially.  Suppose for the sake of contradiction that there exists $\tau > 0$ such that ${L_a - \tau \ge L_b}$.  Then there exists ${\bar k \in \NN}$ such that for all ${k \ge \bar k}$,
\begin{equation}
\label{eq:2sequencemain}
\frac{a_{k+1}}{a_k} \ge L_a - \frac{\tau}{3} \text{ and } \frac{b_{k+1}}{b_k} \le L_a - \frac{\tau}{2}
\end{equation}
Rearranging the first equation in \eqref{eq:2sequencemain} gives us,
\begin{equation}
\label{eq:2sequence2}
a_{k+1} \ge a_k\left(L_{a} - \frac{\tau}{3}\right), \quad \forall k \ge \bar k.
\end{equation}
Replacing $k$ with $k-1$ we get that,
\begin{equation}
\label{eq:2sequence3}
a_{k} \ge a_{k-1}\left(L_{a} - \frac{\tau}{3}\right), \quad \forall k \ge \bar k +1.
\end{equation}
Combining \eqref{eq:2sequence2} with \eqref{eq:2sequence3} yields,
\[
a_{k+1} \ge a_{k-1}\left(L_{a} - \frac{\tau}{3}\right)^2, \quad \forall k \ge \bar k + 1.
\]
Continuing in this fashion we get,
\begin{equation}
\label{eq:2sequence4}
a_{k} \ge a_{\bar k}\left(L_{a} - \frac{\tau}{3}\right)^{k-\bar k} = \frac{a_{\bar k}}{\left(L_{a} - \frac{\tau}{3}\right)^{\bar k}}\left(L_{a} - \frac{\tau}{3}\right)^k, \quad \forall k \ge \bar k.
\end{equation}
Through an analogous approach applied to the second equation of \eqref{eq:2sequencemain} we get,
\begin{equation}
\label{eq:2sequence5}
b_k \le  \frac{b_{\bar k}}{\left(L_{a} - \frac{\tau}{2}\right)^{\bar k}}\left(L_{a} - \frac{\tau}{2}\right)^k, \quad \forall k \ge \bar k.
\end{equation}
Combining the hypothesis that $b_k$ dominates $a_k$ with \eqref{eq:2sequence4} and \eqref{eq:2sequence5} we get,
\begin{equation}
\label{eq:2sequence6}
\frac{b_{\bar k}}{\left(L_{a} - \frac{\tau}{2}\right)^{\bar k}}\left(L_{a} - \frac{\tau}{2}\right)^k \ge  \frac{a_{\bar k}}{\left(L_{a} - \frac{\tau}{3}\right)^{\bar k}}\left(L_{a} - \frac{\tau}{3}\right)^k, \quad \forall k \ge \bar k.
\end{equation}
Observe that $L_{b} \ge 0$ since $b_k \ge 0$ for every $k \in \NN$.  Therefore, ${L_a - \tau \ge 0}$ and we have ${L_{a} - \frac{\tau}{3} > L_{a} - \frac{\tau}{2} > 0}$.  It follows that for sufficiently large $k$, the inequality in \eqref{eq:2sequence6} is violated, giving us the desired contradiction.
\end{proof}

Our main result on $Q$-convergence rates of eigenvalues considers sequences of the form $\{\sigma_k\}$ for some $\sigma \in (0,1)$ where the $k$th term, $\sigma_k$, is the $k$th power of $\sigma$,~i.e.~$\sigma^k$. 
\begin{theorem}
\label{thm:eigQrate}
Let ${\{\Xa : \alpha > 0\}}$ be a central path satisfying Assumption~\ref{assump:para} for a spectrahedron $\F=\F(\A,b)$ that satisfies Assumption~\ref{assump:F}.  Assume further that ${\epsilon^b(\Xa, \F) = \OO(\alpha)}$.  Let~${\II^0,\II^1, \dotso, \II^{\sd(\F)}}$ be a partition of $\{1,\dotso,n\}$ as in Lemma~\ref{lem:sturmeigbounds}.  For ${\sigma \in (0,1)}$ the following hold.
\begin{enumerate}[$(i)$]
\item If ${j \in \II^0}$ then,
\[
\lim_{k \rightarrow \infty} \frac{\lambda_j(X(\sigma^{k+1}))}{\lambda_j(X(\sigma^k))} = 1.
\]
\label{itm:eigQrate1}
\item If ${j \in \II^i}$ with $i \in \{1,\dotso, \sd(\F)\}$ then,
\[
\liminf_{k \rightarrow \infty} \frac{\lambda_j(X(\sigma^{k+1}))}{\lambda_j(X(\sigma^k))} \le \sigma^{\xi(i)} < 1,
\]
where $\xi(i) = 2^{-(\sd(\F)-i)}$.
\label{itm:eigQrate2}
\end{enumerate}
\end{theorem}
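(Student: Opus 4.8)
The plan is to handle the two cases separately, using Lemma~\ref{lem:sturmeigbounds} to control the numerators and a lower bound to control the denominators. For part~\ref{itm:eigQrate1}, when $j \in \II^0$ the eigenvalue $\lambda_j(\Xa)$ converges to a \emph{positive} limit as $\alpha \searrow 0$ (the $r$ largest eigenvalues of $\Xa$ converge to the eigenvalues of $\bar X$, which has rank $r$). Hence along the subsequence $\alpha = \sigma^k$, both $\lambda_j(X(\sigma^{k+1}))$ and $\lambda_j(X(\sigma^k))$ converge to the same positive number, and their ratio converges to $1$. This is immediate.

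For part~\ref{itm:eigQrate2}, fix $i \in \{1,\dotso,\sd(\F)\}$ and $j \in \II^i$. Set $a_k := \lambda_j(X(\sigma^k))$. By Lemma~\ref{lem:sturmeigbounds}, for sufficiently small $\alpha$ we have $\lambda_j(\Xa) = \OO(\alpha^{\xi(i)})$, so along the subsequence $a_k = \OO(\sigma^{k\xi(i)})$; equivalently there is a constant $M$ and an index $\bar k$ with $a_k \le M\sigma^{k\xi(i)}$ for all $k \ge \bar k$. The idea is to apply Lemma~\ref{lem:2sequence} with $b_k := M\sigma^{k\xi(i)}$ (after truncating to $k \ge \bar k$ and noting that $a_k, b_k \to 0$ with $a_k \le b_k$). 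Since $b_{k+1}/b_k = \sigma^{\xi(i)}$ is constant, $\limsup_k b_{k+1}/b_k = \sigma^{\xi(i)}$, and Lemma~\ref{lem:2sequence} gives
\[
\liminf_{k\to\infty} \frac{a_{k+1}}{a_k} \;\le\; \sigma^{\xi(i)}.
\]
Finally $\sigma^{\xi(i)} < 1$ because $\sigma \in (0,1)$ and $\xi(i) = 2^{-(\sd(\F)-i)} > 0$. This is exactly the claimed inequality.

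The only subtlety — the step I'd expect to be the main obstacle to a fully rigorous write-up — is confirming that the hypotheses of Lemma~\ref{lem:2sequence} are genuinely met: namely that $a_k > 0$ for all large $k$ (which follows from Assumption~\ref{assump:para}\,\ref{itm:2para}, since $\Xa \succ 0$ forces every $\lambda_j(\Xa) > 0$), and that we may discard the finitely many indices $k < \bar k$ without affecting the $\liminf$ or $\limsup$. One must also make sure the $\OO$-bound from Lemma~\ref{lem:sturmeigbounds} is applied only for $\alpha$ small enough that the lemma's ``sufficiently small $\alpha$'' clause is in force — but since $\sigma^k \searrow 0$, this holds for all $k$ beyond some threshold, which is all we need. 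Everything else is a direct substitution of a geometric comparison sequence into the already-proved Lemma~\ref{lem:2sequence}.
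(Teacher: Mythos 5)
Your proposal is correct and follows essentially the same route as the paper's own proof: part~(i) is immediate from $\lambda_j(X(\sigma^k))$ tending to a positive limit, and part~(ii) applies Lemma~\ref{lem:2sequence} to $a_k=\lambda_j(X(\sigma^k))$ and the geometric comparison sequence $b_k=M\sigma^{k\xi(i)}$ obtained from Lemma~\ref{lem:sturmeigbounds}, then notes $\sigma^{\xi(i)}<1$. Your additional remarks about positivity of $a_k$ (from $\Xa\succ 0$) and discarding finitely many indices are careful but not substantively different from what the paper does.
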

\begin{proof}
By Assumption~\ref{assump:para} and the definition of $\II^0$ we have that $\lambda_j(X(\sigma^{k}))$ converges, in $k$, to a positive number whenever $j \in \II^0$.  We have proved~\ref{itm:eigQrate1}.  

Now let $j \in \II^i$ with $i \in \{1,\dotso, \sd(\F)\}$.  By Lemma~\ref{lem:sturmeigbounds} we have, 
\begin{equation}
\label{eq:eigQrate1}
\lambda_j(X(\sigma^k)) = \OO \left( \left(\sigma^k \right)^{\xi(i)} \right), \ \forall k \in \NN.
\end{equation}
Thus there exists $M > 0$ such that $\lambda_j(X(\sigma^k)) \le M\sigma^{k\xi(i)}$.  Now the sequences $\{ \lambda_j(X(\sigma^k)) \}_{k \in \NN}$ and $\{M \sigma^{k\xi(i)} \}_{k \in \NN}$ satisfy the assumptions of Lemma~\ref{lem:2sequence}.  Therefore,
\[
\liminf_{k \to \infty} \frac{\lambda_j(X(\sigma^{k+1}))}{\lambda_j(X(\sigma^k))} \le \limsup_{k \to \infty} \frac{M \sigma^{(k+1)\xi(i)}}{M\sigma^{k\xi(i)}} = \sigma^{\xi(i)}.
\]
Lastly $\sigma^{\xi(d)} < 1$ holds since, $\sigma \in (0,1)$ and $\xi(i) >0$.
\end{proof}
The following corollary emphasizes how Theorem~\ref{thm:eigQrate} may be used to upper bound the rank of $\bar X$.
\begin{cor}
\label{corr:eigQrate1}
Let ${\{\Xa : \alpha > 0\}}$ be a central path satisfying Assumption~\ref{assump:para} for a spectrahedron $\F=\F(\A,b)$ that satisfies Assumption~\ref{assump:F}.  Assume further that ${\epsilon^b(\Xa, \F) = \OO(\alpha)}$.  Let~$r$ denote the maximum rank over $\F$ and let ${\sigma\in (0,1)}$.  Then,
\[
\liminf_{k\rightarrow \infty} \frac{\lambda_i (X(\sigma^{k+1}))}{\lambda_i(X(\sigma^k))} \le \sigma^{\xi(1)}= \sigma^{2^{-(\sd(\F)-1)}}<1 \ \iff \ i > r.
\]
\end{cor}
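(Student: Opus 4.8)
The plan is to read off both implications directly from Theorem~\ref{thm:eigQrate}, using only two elementary monotonicity observations. First, since $\sigma \in (0,1)$, the map $t \mapsto \sigma^t$ is strictly decreasing on $\R$; and since $\xi(i) = 2^{-(\sd(\F)-i)}$ increases with $i$, we have $0 < \xi(1) \le \xi(i)$ for all $i \in \{1,\dotso,\sd(\F)\}$, hence $\sigma^{\xi(i)} \le \sigma^{\xi(1)} < 1$ for every such $i$. In particular $\sigma^{\xi(1)} = \sigma^{2^{-(\sd(\F)-1)}} < 1$, which disposes of the side claims in the displayed chain of (in)equalities. It then remains to prove the equivalence, which amounts to locating the index $i$ in the partition $\II^0,\dotso,\II^{\sd(\F)}$ from Lemma~\ref{lem:sturmeigbounds}.

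For the implication $i > r \implies \liminf_{k} \lambda_i(X(\sigma^{k+1}))/\lambda_i(X(\sigma^k)) \le \sigma^{\xi(1)}$: since $\II^0 = \{1,\dotso,r\}$ and the $\II^\ell$ partition $\{1,\dotso,n\}$, an index $i$ with $i > r$ belongs to $\II^{i'}$ for some $i' \in \{1,\dotso,\sd(\F)\}$. Theorem~\ref{thm:eigQrate}\ref{itm:eigQrate2} then gives
\[
\liminf_{k\to\infty} \frac{\lambda_i(X(\sigma^{k+1}))}{\lambda_i(X(\sigma^k))} \le \sigma^{\xi(i')},
\]
and the monotonicity noted above yields $\sigma^{\xi(i')} \le \sigma^{\xi(1)} < 1$, as required.

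For the converse I would argue by contraposition. If $i \le r$ then $i \in \II^0$, so Theorem~\ref{thm:eigQrate}\ref{itm:eigQrate1} gives $\lambda_i(X(\sigma^{k+1}))/\lambda_i(X(\sigma^k)) \to 1$ as $k \to \infty$, whence the $\liminf$ equals $1$. Since $\sigma^{\xi(1)} < 1$, the inequality $\liminf \le \sigma^{\xi(1)}$ fails. Combining the two directions gives $\liminf_{k\to\infty} \lambda_i(X(\sigma^{k+1}))/\lambda_i(X(\sigma^k)) \le \sigma^{\xi(1)} \iff i > r$.

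I do not anticipate any real obstacle here: the corollary is pure bookkeeping on top of Theorem~\ref{thm:eigQrate}. The one point worth flagging is why the threshold is exactly $\sigma^{\xi(1)}$ and not a smaller power of $\sigma$: among the exponents $\xi(1),\dotso,\xi(\sd(\F))$ the smallest is $\xi(1)$, so $\sigma^{\xi(1)}$ is the \emph{largest} of the bounds $\sigma^{\xi(i')}$ supplied by Theorem~\ref{thm:eigQrate}\ref{itm:eigQrate2}; picking this largest value is precisely what makes a single threshold work simultaneously for every $i > r$ while still lying strictly below the limiting value $1$ attained when $i \le r$.
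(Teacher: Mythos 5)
The paper gives no explicit proof of this corollary, treating it as an immediate consequence of Theorem~\ref{thm:eigQrate}; your argument is exactly that deduction, supplemented by the correct observation that $\xi(i)=2^{-(\sd(\F)-i)}$ is increasing in $i$ so that $\sigma^{\xi(1)}$ is the largest of the thresholds $\sigma^{\xi(i')}$, which is what makes a single cutoff work for all $i>r$ while separating from the limit $1$ on $\II^0$. This matches the intended reasoning and is correct.
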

The number $\sigma^{2^{-(\sd(\F)-1)}}$ serves as a threshold so that limit inferiors of the eigenvalue ratios lie below this number if, and only if, those eigenvalues converge to $0$.  For large singularity degree, it may be difficult to distinguish $\sigma^{2^{-(\sd(\F)-1)}}$ from $1$, numerically.  However if we can identify another number, say $\tau \in (0,1)$, that is numerically distinguishable from $1$ and there exists a positive integer $\overline r$ such that, 
\[
\liminf_{k\rightarrow \infty} \frac{\lambda_i (X(\sigma^{k+1}))}{\lambda_i(X(\sigma^k))} \le \tau \ \iff \ i > \overline r,
\]
then $\overline r$ is an upper bound on the maximum rank, $r$, over $\F$.  We state this result formally in the following.
\begin{cor}
\label{corr:eigQratebound}
Let ${\{\Xa : \alpha > 0\}}$ be a central path satisfying Assumption~\ref{assump:para} for a spectrahedron $\F=\F(\A,b)$ that satisfies Assumption~\ref{assump:F}.  Assume further that ${\epsilon^b(\Xa, \F) = \OO(\alpha)}$.  Let~$r$ denote the maximum rank over $\F$ and let ${\sigma \in (0,1)}$.  Suppose there exists ${\tau \in (0,1)}$ and~${\overline r \in \{1,\dotso,n\}}$ such that
\[
\liminf_{k\rightarrow \infty} \frac{\lambda_i (X(\sigma^{k+1}))}{\lambda_i(X(\sigma^k))} \le \tau \ \iff \ i > \overline r.
\]
Then $\overline r \ge r$.
\end{cor}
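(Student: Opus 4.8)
The plan is to derive Corollary~\ref{corr:eigQratebound} from Theorem~\ref{thm:eigQrate} by a short contradiction argument; in fact only the ``vanishing'' half of the hypothesized biconditional, together with part~\ref{itm:eigQrate1} of Theorem~\ref{thm:eigQrate}, will be needed. The underlying point is simply that the threshold $\tau$ cannot separate an index lying inside the block $\II^0 = \{1,\dotso,r\}$ from the indices outside it: eigenvalues indexed by $\II^0$ have successive ratios tending to $1$, and $1 > \tau$.

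First I would suppose, for the sake of contradiction, that $\overline r < r$. Since $\F \ne \{0\}$ under Assumption~\ref{assump:F}, the maximum rank satisfies $r \ge 1$, and being the rank of an $n\times n$ matrix it satisfies $r \le n$; hence $i := r$ is a legitimate index in $\{1,\dotso,n\}$, and because $\overline r < r$ it satisfies $i > \overline r$. Applying the hypothesized equivalence in the direction ``$i > \overline r$ implies $\liminf \le \tau$'' then yields
\[
\liminf_{k\to\infty} \frac{\lambda_r(X(\sigma^{k+1}))}{\lambda_r(X(\sigma^k))} \le \tau < 1 .
\]

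Next I would contradict this using the convergence of the path. By Assumption~\ref{assump:para}\ref{itm:1para} we have $X(\alpha) \to \bar X \in \relint(\F)$ with $\rank(\bar X) = r$, so $\lambda_r(\bar X) > 0$. By continuity of eigenvalues, both $\lambda_r(X(\sigma^k))$ and $\lambda_r(X(\sigma^{k+1}))$ converge to $\lambda_r(\bar X) > 0$ as $k \to \infty$, hence the ratio converges to $1$ and its $\liminf$ equals $1$ --- this is exactly Theorem~\ref{thm:eigQrate}\ref{itm:eigQrate1} applied to $j = r \in \II^0$. Since $\tau < 1$, this contradicts the displayed inequality, and therefore $\overline r \ge r$.

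I do not anticipate a genuine obstacle: the corollary is essentially a bookkeeping consequence of Theorem~\ref{thm:eigQrate}, recording that any numerically convenient threshold $\tau$ producing a clean split of the indices necessarily overestimates (or matches) the true maximum rank. The only points requiring a moment's care are that the index $i = r$ indeed lies in the range $\{1,\dotso,n\}$ over which the assumed biconditional is stated, and that $\rank(\bar X) = r$ genuinely forces $\lambda_r(\bar X) \ne 0$ rather than merely $\lambda_r(\bar X) \ge 0$.
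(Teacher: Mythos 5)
Your proof is correct and supplies the formal contradiction argument that the paper only sketches informally in the paragraph preceding the corollary (the paper gives no \texttt{proof} environment for it). The reasoning --- that any threshold $\tau<1$ producing the clean split must place all of $\II^0 = \{1,\dotso,r\}$ on the $i\le \overline r$ side, because their $Q$-ratios tend to $1$ by Theorem~\ref{thm:eigQrate}\ref{itm:eigQrate1} --- is exactly what the paper has in mind, and your care in noting $\rank(\bar X)=r$ for $\bar X\in\relint(\F)$ (hence $\lambda_r(\bar X)>0$) is the right justification.
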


\begin{remark}
In the results of this section we require knowledge of the magnitude of backward error for $\F$.  When the SDP in \eqref{eq:sdp} has a non-trivial objective, we cannot expect to know the optimal value $p^*$.  Consequently, backward error is intractable.  However, primal dual algorithms measure the duality gap which is an overestimate for $( \langle C, X \rangle - p^*  )$.  Thus,
\[
\epsilon^b(X, \F) \le \dist \left(X, \{S : \widehat{\A}(S) = \widehat b\} \right) + \dist(X, \Snp) + ( \langle C, X \rangle - p^* ).
\] 
In particular, for the central path $\{\Xa : \alpha > 0\}$ it holds that,
\[
\dist \left(\Xa, \{S : \widehat{\A}(S) = \widehat b\} \right) + \dist(\Xa, \Snp) + ( \langle C, \Xa \rangle - p^* ) = \OO(\alpha) \ \implies \ \epsilon^b(\Xa, \F) = \OO(\alpha),
\]
the assumed bound in the above results.
\end{remark}

\subsection{Bounds on Forward Error and Singularity Degree}
\label{sec:boundferror}

Any bound on maximum rank, such as the one from the previous section, may be used to provide lower bounds on forward error and singularity degree.
\begin{theorem}
\label{thm:ferrorbound}
Let ${\{\Xa : \alpha > 0\}}$ be a central path satisfying Assumption~\ref{assump:para} for a spectrahedron $\F=\F(\A,b)$ that satisfies Assumption~\ref{assump:F}.  If $\overline r$ is an upper bound for the maximum rank over~$\F$ then for all $\alpha > 0$ it holds that,
\[
\epsilon^f(X(\alpha), \F)  \ge \lVert \begin{pmatrix}
\lambda_{\overline r+1}(\Xa) & \cdots & \lambda_n(\Xa) 
\end{pmatrix}^T \rVert_2.
\]
\end{theorem}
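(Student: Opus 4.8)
The plan is to reduce the claim to a one-sided Weyl-type eigenvalue inequality, exploiting that every feasible point has small rank while $X(\alpha)$ is positive definite. Since $\overline r$ is an upper bound for $\max_{Y\in\F}\rank(Y)$, every $Y\in\F$ satisfies $\rank(Y)\le\overline r$, and because $Y\succeq 0$ this forces $\lambda_{\overline r+1}(Y)=\cdots=\lambda_n(Y)=0$. As $\epsilon^f(X(\alpha),\F)=\inf_{Y\in\F}\lVert X(\alpha)-Y\rVert_F$, it suffices to establish the asserted lower bound for $\lVert X(\alpha)-Y\rVert_F$ for each fixed $Y\in\F$ and then pass to the infimum.

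Fixing $Y\in\F$, I would set $E:=X(\alpha)-Y$, a symmetric matrix, and apply Weyl's inequality to the splitting $X(\alpha)=Y+E$, which gives, for $k=1,\dots,n-\overline r$,
\[
\lambda_{\overline r+k}(X(\alpha))\ \le\ \lambda_{\overline r+1}(Y)+\lambda_k(E)\ =\ \lambda_k(E).
\]
By part~\ref{itm:2para} of Assumption~\ref{assump:para} we have $X(\alpha)\succ 0$, so the left-hand side is nonnegative; hence $\lambda_k(E)\ge\lambda_{\overline r+k}(X(\alpha))\ge 0$ and therefore $\lambda_{\overline r+k}(X(\alpha))^2\le\lambda_k(E)^2$. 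Summing over $k=1,\dots,n-\overline r$ and using $\sum_{k=1}^{n-\overline r}\lambda_k(E)^2\le\sum_{k=1}^{n}\lambda_k(E)^2=\lVert E\rVert_F^2$ yields $\sum_{i=\overline r+1}^{n}\lambda_i(X(\alpha))^2\le\lVert X(\alpha)-Y\rVert_F^2$, which is exactly the desired inequality for this $Y$; taking the infimum over $Y\in\F$ finishes the proof. (Equivalently, one can invoke the Eckart--Young--Mirsky theorem: since every $Y\in\F$ has rank at most $\overline r$, the distance from $X(\alpha)$ to $\F$ is at least the error of the best rank-$\overline r$ Frobenius approximation of $X(\alpha)$, namely $\sqrt{\sum_{i>\overline r}\lambda_i(X(\alpha))^2}$.)

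I do not anticipate a genuine obstacle; the proof is short. The only points needing care are keeping the direction of Weyl's inequality correct, so that positivity of $X(\alpha)$ converts the eigenvalue inequality into an inequality between squares, and noting that the maximum rank over $\F$ is well defined (ranks are bounded by $n$ and $\F\ne\emptyset$). It is worth remarking that neither part~\ref{itm:1para} of Assumption~\ref{assump:para} nor any bound on $\epsilon^b(X(\alpha),\F)$ is used here: only $X(\alpha)\succ 0$ and the rank bound on $\F$ enter.
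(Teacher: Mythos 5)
Your proof is correct, and it takes a genuinely different route than the paper. The paper's proof first invokes closedness of $\F$ to obtain a nearest point $X \in \F$, then applies Fan's trace inequality $\langle A, B\rangle \le \lambda(A)^T\lambda(B)$ to deduce the Hoffman--Wielandt-type bound $\lVert \Xa - X \rVert_F \ge \lVert \lambda(\Xa) - \lambda(X)\rVert_2$, and finishes by dropping the first $\overline r$ coordinates (which suffices because $\lambda_i(X)=0$ for $i > \overline r$). You instead fix an arbitrary $Y \in \F$, apply Weyl's inequality to $\Xa = Y + E$ using $\lambda_{\overline r+1}(Y)=0$, and square (which is legitimate since $\Xa \succ 0$ forces both sides nonnegative); passing to the infimum over $Y$ sidesteps the need for an attained minimizer. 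Your parenthetical Eckart--Young--Mirsky argument is the cleanest version of the same idea: $\F$ sits inside the variety of rank-$\le\overline r$ matrices, and $\Xa \succ 0$ means its singular values coincide with its eigenvalues, so the distance to that variety is exactly $\bigl(\sum_{i>\overline r}\lambda_i(\Xa)^2\bigr)^{1/2}$. Both routes are standard; yours is slightly more self-contained (no projection existence needed) and makes the geometric content explicit, while the paper's Fan-inequality route reuses a tool that also appears elsewhere in their analysis. Your observation that Assumption~\ref{assump:para}\ref{itm:1para} and the backward-error bound play no role here is accurate and matches the paper's proof as well.
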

\begin{proof}
Let $\overline r$ be as in the hypothesis and let $\alpha > 0$.  Since $\F$ is a closed convex set, there exists $X\in \F$ such that,
\[
\epsilon^f(X(\alpha), \F) = \lVert \Xa - X\rVert_F.
\]
Then observing that $\lVert S \rVert_F = \lVert \lambda(S) \rVert_2$ for any $S \in \Sn$ we have,
\begin{align*}
\epsilon^f(X(\alpha), \F)^2 &= \lVert \Xa - X\rVert_F^2 \\
&= \lVert \Xa \rVert_F^2 + \lVert X \rVert_F^2 - 2 \langle \Xa, X \rangle \\
 &= \lVert \lambda(\Xa) \rVert_2^2 + \lVert \lambda(X) \rVert_2^2 - 2 \langle \Xa, X \rangle.
\end{align*}
Applying the classical bound $\langle X,Y \rangle \le \lambda(X)^T \lambda(Y)$,~e.g.~\cite{Fan:50,hw53}, we get,
\begin{align*}
\epsilon^f(X(\alpha), \F)^2 &\ge \lVert \lambda(\Xa) \rVert_2^2 + \lVert \lambda(X) \rVert_2^2 - 2 \lambda(\Xa)^T \lambda(X)\\
&= \lVert \lambda(\Xa) - \lambda(X) \rVert_2^2 \\
&\ge \lVert \begin{pmatrix}
\lambda_{\overline r+1}(\Xa) & \cdots & \lambda_n(\Xa) 
\end{pmatrix}^T \rVert_2^2.
\end{align*}
Taking the square root of both sides yields the desired result.
\end{proof}

\begin{theorem}
\label{thm:lboundsd}
Let ${\{\Xa : \alpha > 0\}}$ be a central path satisfying Assumption~\ref{assump:para} for a spectrahedron $\F=\F(\A,b)$ that satisfies Assumption~\ref{assump:F}.  Assume further that ${\epsilon^b(\Xa, \F) = \OO(\alpha)}$.  Let~$\overline r$ be an upper bound on the maximum rank over $\F$ and let ${\sigma \in (0,1)}$.  Suppose $\underline d$ is the smallest positive integer such that,
\begin{equation}
\label{eq:1lboundsd}
\liminf_{k\rightarrow \infty} \frac{\lambda_i (X(\sigma^{k+1}))}{\lambda_i(X(\sigma^k))} \le \sigma^{2^{-(\underline d - 1)}} \ \iff \ i > \overline r.
\end{equation}
Then $\underline d \le \sd(\F)$.
\end{theorem}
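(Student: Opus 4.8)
The plan is to verify that the defining equivalence \eqref{eq:1lboundsd} is already satisfied by the specific integer $d=\sd(\F)$, or by an even smaller one, so that minimality of $\underline d$ forces $\underline d\le\sd(\F)$. Abbreviate $\rho_i:=\liminf_{k\to\infty}\frac{\lambda_i(X(\sigma^{k+1}))}{\lambda_i(X(\sigma^k))}$ for $i\in\{1,\dots,n\}$. The engine of the proof is Corollary~\ref{corr:eigQrate1}: writing $r$ for the true maximum rank over $\F$, it states precisely that $\rho_i\le\sigma^{2^{-(\sd(\F)-1)}}$ if and only if $i>r$. I would first note that Assumption~\ref{assump:F} guarantees that $\sd(\F)\ge 1$ is a positive integer and that $r<n$, so that $\sd(\F)$ is a legitimate candidate integer for \eqref{eq:1lboundsd} and all the indices below make sense. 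The argument then splits according to whether the supplied upper bound $\overline r$ is tight, recalling that $\overline r\ge r$ always holds.

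If $\overline r=r$, then the equivalence in \eqref{eq:1lboundsd} evaluated at $d=\sd(\F)$ reads ``$\rho_i\le\sigma^{2^{-(\sd(\F)-1)}}\iff i>\overline r$'', which is verbatim the content of Corollary~\ref{corr:eigQrate1}. Thus \eqref{eq:1lboundsd} holds for the positive integer $\sd(\F)$; since $\underline d$ is by definition the smallest positive integer for which \eqref{eq:1lboundsd} holds, we conclude $\underline d\le\sd(\F)$.

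If $\overline r>r$, I would test \eqref{eq:1lboundsd} — which by hypothesis holds for $d=\underline d$ — at the single index $i=\overline r$. Its right-hand side ``$\overline r>\overline r$'' is false, so its left-hand side must be false as well, i.e.\ $\rho_{\overline r}>\sigma^{2^{-(\underline d-1)}}$. On the other hand $\overline r>r$, so Corollary~\ref{corr:eigQrate1} gives $\rho_{\overline r}\le\sigma^{2^{-(\sd(\F)-1)}}$. Chaining the two inequalities yields $\sigma^{2^{-(\underline d-1)}}<\sigma^{2^{-(\sd(\F)-1)}}$, and since $t\mapsto\sigma^t$ is strictly decreasing on $\R$ for $\sigma\in(0,1)$, this forces $2^{-(\underline d-1)}>2^{-(\sd(\F)-1)}$, hence $\underline d<\sd(\F)$. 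In both cases $\underline d\le\sd(\F)$, which is the claim.

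The computations are all one-liners, so the difficulty is not technical; the one point that needs care is the case $\overline r>r$, where one must see that overestimating the maximum rank does not invalidate the conclusion but in fact forces $\underline d$ to be \emph{strictly} smaller than $\sd(\F)$. The reason is that some eigenvalue index between $r+1$ and $\overline r$ still belongs to one of the vanishing blocks $\II^1,\dots,\II^{\sd(\F)}$ of Lemma~\ref{lem:sturmeigbounds}, and hence has a ``small'' limit-inferior ratio $\rho_i\le\sigma^{2^{-(\sd(\F)-1)}}<1$; this is consistent with the prescribed equivalence \eqref{eq:1lboundsd} only when the threshold $\sigma^{2^{-(\underline d-1)}}$ is kept above $\sigma^{2^{-(\sd(\F)-1)}}$, which is exactly what pins $\underline d$ below $\sd(\F)$.
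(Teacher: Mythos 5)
Your proof is correct and takes essentially the same route as the paper: a case split on $\overline r=r$ versus $\overline r>r$, invoking Corollary~\ref{corr:eigQrate1} to show that $\sd(\F)$ is itself a valid candidate in the first case and to chain inequalities in the second. The only cosmetic difference is that the paper tests the index $i=r+1$ where you test $i=\overline r$; both lie in the range $(r,\overline r]$ where the two sides of \eqref{eq:1lboundsd} disagree, so the argument is the same.
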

\begin{proof}
Let $r$ denote the maximum rank over $\F$.  Suppose $\overline r > r$.  Then by Corollary~\ref{corr:eigQrate1} and \eqref{eq:1lboundsd} we have,
\[
\sigma^{2^{-(\underline d-1)}} <\liminf_{k\rightarrow \infty} \frac{\lambda_{r+1} (X(\sigma^{k+1}))}{\lambda_{r+1}(X(\sigma^k))} \le \sigma^{2^{-(\sd(\F) - 1)}}.
\]
It follows that $\underline d \le \sd(\F)$.  Now suppose that $\overline r = r$.  Then by Corollary~\ref{corr:eigQrate1} we have,
\begin{equation}
\label{eq:2lboundsd}
\liminf_{k\rightarrow \infty} \frac{\lambda_i (X(\sigma^{k+1}))}{\lambda_i(X(\sigma^k))} \le \sigma^{2^{-(\sd(\F) - 1)}} \ \iff \ i > r = \overline r.
\end{equation}
Out of all positive integers that could replace $\sd(\F)$ in \eqref{eq:2lboundsd}, we chose $\underline d$ to be the smallest.  Hence ${\underline d \le \sd(\F)}$, as desired.
\end{proof}

\section{Singularity Degree as a Measure of Hardness}
\label{sec:boundsd}
It is certainly possible to construct a parametric curve $\{ \Xa : \alpha > 0\}$ with the properties of Assumption~\ref{assump:para}, for which singularity degree is large, but forward error is small.  For instance, the path defined as ${\Xa := \bar X + \alpha I}$, where $\bar X \in \relint(\F)$, exhibits fast convergence and low forward error irrespective of the singularity degree.   This demonstrates that singularity degree is not a sufficient condition for slow convergence for \emph{all} parametric curves.  However, for many of the central paths constructed by state of the algorithms, empirical evidence indicates otherwise. In this section we present several results that give credence to the notion that singularity degree is a measure of hardness for a family of central paths.  In Section~\ref{sec:cpaths} we introduce the family of central paths and in Section~\ref{sec:sdhard} we present the main results of the section.

\subsection{Analaysis of a Family of Central Paths}
\label{sec:cpaths}
The classical \emph{interior point} method for SDPs is based on a central path that is constructed by assuming that both the primal and the dual satisfy the Slater condition.  As this assumption is quite restrictive, \emph{infeasible} central paths assuming weaker conditions have been subsequently proposed.  Among these are \cite{NestToddYe:96,int:deklerk7,PotShe:95,KlerkRoosTerlakyinit:97,lusz00}.  

In \cite{PotShe:95}, Potra and Sheng proposed a family of infeasible
central paths that are based on perturbing the feasible region so as to
satisfy the Slater condition, and then decreasing the perturbation. Our
analysis requires a complete knowledge of the algebraic representation
of the spectrahedron.  Thus we assume that $\F = \F(\A,b)$ for a known
linear map $\A$ and vector $b$.  In terms of the SDP of \eqref{eq:sdp},
this equates to feasibility problems, i.e.,~${C=0}$ and $\A:=\widehat \A$, $b:= \widehat b$.  From the family of paths proposed by Potra and Sheng we choose the path $\{\Xa: \alpha > 0\}$ defined by,
\begin{equation}
\label{eq:param}
\begin{cases}
\Xa := \arg \max \{ \alpha \log \det ( X) : X\in \Fa \}, \\
\Fa := \{X \in \Snp : \A(X) = \ba \}, \\
\ba := b+ \alpha \A(B),
\end{cases}
\end{equation}
where $B \succ 0$ is fixed.  The matrix $\Xa$ exists for each $\alpha > 0$ if, and only if, $\Fa$ is nonempty and bounded.  Hence the following assumption.
\begin{assump}
\label{assump:Fpotshe}
Let $\F= \F(\A,b)$ be a spectrahedron defined in terms of a map $\A : \Sn \to \Rm$ and a vector $b \in \Rm$.  We assume that,
\begin{enumerate}[$(i)$]
\item $\F$ is nonempty, bounded, ${\sd(\F) \ge 1}$, and ${\F \ne \{0\}}$,
\item $\A$ is surjective.
\end{enumerate}
\end{assump}
Assumption~\ref{assump:Fpotshe} differs from Assumption~\ref{assump:F} in the additional requirements that $\F$ is bounded and that $\A$ is surjective.  The need for a bounded $\F$ has already been discussed and the restriction on $\A$ ensures a unique $y \in \Rm$ for every $Z \in \range(\A^*)$, a property that will prove convenient in the subsequent discussion.

It is easy to see that if $\F \ne \emptyset$ then $\Fa$ has a Slater point for every $\alpha >0$.  For instance, the set~${\F + \alpha B}$ has positive definite elements and is contained in $\Fa$.  Since $\Xa$ is chosen to be the determinant maximizer over $\Fa$ it follows that ${\Xa \succ 0}$ and $\Xa \in \relint(\F)$ for each $\alpha >0$.  We have thus shown that this central path satisfies Assumption~\ref{assump:para}~\ref{itm:2para}.  In the remainder of this section we show that it also possesses the other properties of Assumption~\ref{assump:para}.  Namely, that $\{\Xa:\alpha>0\}$ is smooth and converges to a matrix in $\relint(\F)$ as ${\alpha \searrow 0}$. 

The optimality conditions for \eqref{eq:param} yield the primal-dual central path,
\begin{equation}
\label{eq:optimalsystem}
 \left \{ (\Xa, \ya, \Za) \in \Snpp \times \Rm \times \Snpp : \begin{bmatrix}
 \A^*(\ya)-\Za  \\
\A(\Xa) - \ba \\
\Za \Xa - \alpha I
\end{bmatrix} = 0, \ \alpha > 0 \right \}.
\end{equation}
This primal-dual central path is smooth around every $\alpha > 0$ and admits the following convergence result.
 \begin{theorem}
 \label{thm:paramconv}
Let ${\{(\Xa, \ya, \Za): \alpha > 0\}}$ be the primal-dual central path of \eqref{eq:optimalsystem} for a spectrahedron $\F = \F(\A,b)$ satisfying Assumption~\ref{assump:Fpotshe}. Then,
\[ 
\lim_{\alpha \searrow 0} (\Xa, \ya , \Za) = (\bar X, \bar y, \bar Z) \in \Snp \times \Rm \times \Snp,
\]
with 
\begin{equation}
\label{eq:1paramconv}
\begin{cases}
\bar X \in \relint(\F), \\
\bar Z = \A^*(\bar y), \\
\bar Z \in \relint \{ Z \in \Snp \setminus \{0\} : Z = \A^*(y), \ y^Tb = 0, \ y \in \Rm\}.
\end{cases}
\end{equation}
\end{theorem}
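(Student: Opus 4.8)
The plan is to analyze the $\log\det$ maximizer $\Xa$ over the perturbed feasible set $\Fa$ via its optimality conditions, exploiting the strong convexity of $-\log\det$ to extract convergence, and then to identify the limit points by a careful argument using the complementarity relation $\Za\Xa = \alpha I$ together with a theorem of the alternative (Fact~\ref{fact:thmalt}).

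First I would establish boundedness and existence. Since $\F$ is nonempty and bounded and $\A$ is surjective, for small $\alpha>0$ the set $\Fa = \{X \in \Snp : \A(X) = \ba\}$ is a small perturbation of $\F$; I would argue it remains nonempty (it contains $\F + \alpha B$) and bounded (using surjectivity of $\A$ and a standard argument that the feasible sets $\Fa$ are uniformly bounded for $\alpha$ in a bounded interval, since $\ba \to b$ and $\F$ is bounded). Hence $\Xa$ is well-defined, lies in $\relint(\Fa)\subseteq\Snpp$, and the family $\{\Xa : \alpha \in (0,\bar\alpha)\}$ is bounded, so it has cluster points in $\Snp$; similarly, writing the dual optimality conditions \eqref{eq:optimalsystem}, I would show $\{(\ya,\Za)\}$ is bounded (the surjectivity of $\A$ gives $\ya$ uniquely from $\Za$, and $\Za\Xa=\alpha I$ with $\Xa$ bounded forces control on $\Za$ on the relevant subspace; boundedness of the dual iterates is the standard consequence of primal boundedness plus a Slater-type / interior argument, or can be read off from the convergence theory for analytic centers).

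Next I would identify the limits. Let $(\bar X,\bar y,\bar Z)$ be any cluster point as $\alpha\searrow 0$. Passing to the limit in $\A^*(\ya)=\Za$ and $\A(\Xa)=\ba$ gives $\bar Z=\A^*(\bar y)$ and $\bar X\in\F$. From $\Za\Xa=\alpha I$ we get $\trace(\Za\Xa)=n\alpha\to 0$, so $\bar Z\bar X=0$, i.e.~$\bar X\perp\bar Z$; combined with $\A(\Xa)=\ba$ this yields $\langle \bar y, b\rangle = \lim \langle \ya, \ba\rangle - \langle \ya, \alpha\A(B)\rangle = \lim \langle \Za,\Xa\rangle - O(\alpha) = 0$ (using $\langle\ya,\ba\rangle=\langle\A^*(\ya),\Xa\rangle=\langle\Za,\Xa\rangle=n\alpha$ along the path). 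So $\bar Z$ lies in $\{Z\in\Snp : Z=\A^*(y),\ y^Tb=0\}$. The substantive claims are (a) $\bar X\in\relint(\F)$, (b) $\bar Z\neq 0$, and (c) $\bar Z$ is a \emph{relative interior} point of that cone of exposing vectors — equivalently $\bar Z$ is a \emph{maximum-rank} exposing vector. For (a): if $\bar X$ were on the relative boundary of $\F$, there would be an exposing vector $S\succeq 0$, $S=\A^*(y)$, $y^Tb=0$, $S\neq 0$, with $\langle S,\bar X\rangle=0$ but $S$ not vanishing on all of $\F$; comparing $\log\det$ values of $\Xa$ against $X(\alpha)':=$ the $\log\det$ maximizer over $\Fa \cap (\text{a face pushed into the relative interior})$, or more directly using the first-order optimality condition $\A^*(\ya) = \alpha \Xa^{-1}$ rearranged as $\Xa = \alpha\Za^{-1}$, one shows that the eigenvalue of $\Xa$ in the direction of such an $S$ would have to decay too slowly relative to the decay dictated by interior-point theory. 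The cleanest route is: the $\log\det$ central path of an SDP with a Slater point converges to the analytic center of the optimal face, which lies in the relative interior — here the optimal face of the perturbed problem is all of $\Fa$, and one takes a limit of analytic centers. For (b)–(c): the complementary slackness $\bar X\perp\bar Z$ with $\bar X\in\relint(\F)$ forces $\range(\bar Z)\subseteq \big(\spanl\{\range(X):X\in\F\}\big)^\perp$, and the reverse inclusion — that $\bar Z$ exposes the \emph{minimal} face reachable in one step, i.e.~has maximal rank among such exposing vectors — follows because $\Za = \alpha\Xa^{-1}$: the eigenvalues of $\Za$ that blow up are exactly reciprocals of the eigenvalues of $\Xa$ that vanish, and $\Xa\to\bar X\in\relint\F$ means $\rank\bar X$ equals the maximum rank $r$ over $\F$, so exactly $n-r$ eigenvalues of $\Xa$ vanish, i.e.~$\bar Z$ has rank... — here one must be careful, since $\Za$ need not converge to a rank-$(n-r)$ matrix; rather $\bar Z$ has rank equal to $q_1 = \rank(Z^1)$, the first facial-reduction block size, and one argues maximality by a perturbation/duality argument showing no exposing vector of strictly larger rank can be orthogonal to a relative-interior point of $\F$.

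Finally, to upgrade "every cluster point satisfies \eqref{eq:1paramconv}" to "$\lim$ exists," I would invoke uniqueness: the analytic center of $\Fa$ converges, as $\alpha\searrow0$, to a \emph{unique} limit because $-\log\det$ is strictly convex, so $\Xa$ is single-valued and its limit, being simultaneously in $\relint(\F)$ and characterized as the analytic center of the optimal face, is unique; the dual limit is then forced by $\bar Z=\A^*(\bar y)$ and surjectivity of $\A$ giving a unique $\bar y$. The main obstacle I anticipate is item (c) — proving that $\bar Z$ lands in the \emph{relative interior} of the cone of exposing vectors (maximal rank), since this does not follow from complementarity alone and seems to require either a delicate analysis of the rate at which the small eigenvalues of $\Xa$ decay (tying back to Fact~\ref{fact:sturmblockbounds}) or a self-duality/symmetry argument between the primal $\log\det$ problem and its dual; everything else is a fairly standard adaptation of interior-point convergence theory to the perturbed (infeasible) setting of Potra–Sheng.
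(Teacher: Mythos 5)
Your outline of the cluster-point properties is reasonable and roughly matches what the cited literature establishes, but the uniqueness step has a genuine gap. You assert that $\Xa$ has a unique limit ``because $-\log\det$ is strictly convex, so $\Xa$ is single-valued.'' Single-valuedness of $\Xa$ for each fixed $\alpha>0$ does not imply that the limit as $\alpha\searrow 0$ exists: a priori the cluster set could be a continuum, and strict convexity of $-\log\det$ in $X$ for fixed $\alpha$ says nothing about behavior along the path. The follow-up claim that any cluster point is ``characterized as the analytic center of the optimal face,'' hence unique, is also unsupported: here $\Xa$ is the analytic center of the \emph{perturbed} spectrahedron $\Fa$ with $\ba\to b$, which is not the $\mu$-parametrized central path of a fixed SDP, and the limit need not be any particular distinguished point of $\F$ (it can depend on the perturbation direction $\A(B)$). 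The paper handles this exact step by invoking Milnor's curve selection lemma (via Halick\'a and Halick\'a--de~Klerk--Roos) applied to the real algebraic curve $\alpha\mapsto(\Xa,\ya,\Za)$, which forces a bounded algebraic path to converge. An argument of this kind --- or a direct proof that any two cluster points coincide --- is required; your proposal does not supply one.

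For the rest, your reading is fair: isolating $\bar X\in\relint(\F)$, $\bar Z\neq 0$, and maximality of $\rank\bar Z$ as the substantive claims is correct, and you rightly flag (c) as delicate. The paper does not prove these from scratch either --- it cites a known result for the cluster-point properties and Milnor for uniqueness --- so a fully self-contained proof along your lines would also need the (c) argument you only gesture at, for instance by combining $\Za=\alpha\Xa^{-1}$ with the fact that $\Xa$ maximizes $\log\det$ over $\Fa\supseteq\F+\alpha B$ to control the decay of the small eigenvalues and show that $\bar Z$ exposes no more than the minimal face of $\F$.
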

\begin{proof}
While we are not aware of this exact result in the literature, it may be deduced from two known results.  Firstly, in~\cite{int:Goldfarb15} the authors show that every limit point of the primal-dual central path exhibits the properties of \eqref{eq:1paramconv}.  Then a lemma of Milnor~\cite{mi68} may be used as in~\cite{Halicka:01,HalickaKlerkRoos:01} to show that the set of limit points is a singleton.
\end{proof}

An immediate consequence of Theorem~\ref{thm:paramconv} is a statement about the convergence rates of eigenvalues of $\Xa$ and $\Za$.
\begin{cor}
\label{cor:eigrates}
Let ${\{(\Xa, \ya, \Za): \alpha > 0\}}$ be the primal-dual central path of \eqref{eq:optimalsystem} for a spectrahedron $\F = \F(\A,b)$ satisfying Assumption~\ref{assump:Fpotshe}.  Let ${(\bar X, \bar y, \bar Z)}$ be the limit point of the primal-dual central path and let $r$ and $q$ denote the rank of $\bar X$ and $\bar Z$, respectively.  Let ${\bar \alpha >0}$.  Then for every $\alpha \in (0,\bar \alpha)$ we have,
\begin{equation}
\label{eq:coreigratesX}
\lambda_i(\Xa) = \begin{cases}
\Theta(1) \quad &i \le r, \\
\Omega(\alpha) \text{ and } \ne \OO(\alpha) &i \in [r+1,n-q], \\
\Theta(\alpha) &i\ge n-q+1,
\end{cases}
\end{equation}
and
\begin{equation}
\label{eq:coreigratesZ}
\lambda_i(\Za) = \begin{cases}
\Theta(1) \quad &i \le q, \\
\OO(1) \text{ and } \ne \Omega(1) &i \in [q+1,n-r], \\
\Theta(\alpha) &i\ge n-r+1.
\end{cases}
\end{equation}
\end{cor}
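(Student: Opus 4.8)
The plan is to exploit the three structural facts we have available: the complementarity relation $\Za\Xa = \alpha I$ from \eqref{eq:optimalsystem}, the convergence $\Xa \to \bar X$ and $\Za \to \bar Z$ with the rank information $\rank(\bar X)=r$, $\rank(\bar Z)=q$ from Theorem~\ref{thm:paramconv}, and the eigenvalue bounds of Sturm as repackaged in Lemma~\ref{lem:sturmeigbounds} (applied to both $\Xa$ and, by a dual version, $\Za$). The key elementary observation is that from $\Za\Xa=\alpha I$ we get $\det(\Xa)\det(\Za)=\alpha^n$, and more usefully $\Xa = \alpha \Za^{-1}$, so the eigenvalues of $\Xa$ are exactly $\alpha/\lambda_i(\Za)$ (with the opposite ordering). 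This single identity will let me transfer information between the two displays.

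First I would establish the $\Theta(1)$ rows: by Theorem~\ref{thm:paramconv}, $\bar X \in \relint(\F)$ has rank exactly $r$, so its $r$ largest eigenvalues are positive and its remaining $n-r$ eigenvalues are $0$; continuity of eigenvalues then gives $\lambda_i(\Xa)=\Theta(1)$ for $i\le r$ and $\lambda_i(\Xa)\to 0$ for $i>r$. The identical argument with $\bar Z$ (rank $q$) gives $\lambda_i(\Za)=\Theta(1)$ for $i\le q$ and $\lambda_i(\Za)\to 0$ for $i>q$. Next, the $\Theta(\alpha)$ rows: using $\lambda_i(\Xa)=\alpha/\lambda_{n-i+1}(\Za)$, the $q$ smallest eigenvalues of $\Xa$ correspond to the $q$ largest eigenvalues of $\Za$, which are $\Theta(1)$; hence $\lambda_i(\Xa)=\Theta(\alpha)$ for $i\ge n-q+1$. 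Symmetrically, the $r$ largest eigenvalues of $\Xa$ are $\Theta(1)$, so $\lambda_i(\Za)=\alpha/\lambda_{n-i+1}(\Xa)=\Theta(\alpha)$ for $i\ge n-r+1$. This already pins down the top and bottom rows of both displays.

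The remaining, and harder, part is the middle rows: for $i\in[r+1,n-q]$ I must show $\lambda_i(\Xa)=\Omega(\alpha)$ but $\lambda_i(\Xa)\ne\OO(\alpha)$, and dually for $\Za$. The $\Omega(\alpha)$ bound is again immediate from $\lambda_i(\Xa)=\alpha/\lambda_{n-i+1}(\Za)$ together with the fact that those eigenvalues of $\Za$ are bounded above (indeed $\to 0$). The delicate claim is $\lambda_i(\Xa)\ne \OO(\alpha)$, equivalently $\lambda_{n-i+1}(\Za)\ne\Omega(1)$ is not enough — I need $\lambda_{n-i+1}(\Za)\to 0$, i.e. that genuinely fewer than $n-r$ eigenvalues of $\Za$ stay bounded away from zero, which is exactly $\rank(\bar Z)=q \le n-r$, plus I need that these middle eigenvalues of $\Za$ do not decay as fast as $\alpha$. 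Here is where I expect the real work: I would argue that if some eigenvalue $\lambda_j(\Za)$ with $j$ in the middle block were $\OO(\alpha)$, then the corresponding eigenvalue $\lambda_{n-j+1}(\Xa)=\alpha/\lambda_j(\Za)$ would be $\Omega(1)$, forcing $\rank(\bar X)>r$, a contradiction; so $\lambda_j(\Za)\ne\OO(\alpha)$ for $j\le n-r$, and symmetrically $\lambda_i(\Xa)\ne\OO(\alpha)$ for $i\le n-q$. Combined with $\lambda_i(\Xa)\to 0$ for $i>r$ (so these are not $\Theta(1)$), this yields precisely the middle-row assertion. The main obstacle is being careful about the index bookkeeping in the identity $\lambda_i(\Xa)\lambda_{n-i+1}(\Za)=\alpha$ and making sure the three blocks $[1,r]$, $[r+1,n-q]$, $[n-q+1,n]$ (and their duals) partition correctly and that the middle block is genuinely the complement — this uses $r+q\le n$, which itself follows from $\bar X\bar Z=0$ (the limit of $\Za\Xa=\alpha I$ is $\bar Z\bar X=0$, forcing $\range(\bar X)\perp\range(\bar Z)$ hence $r+q\le n$). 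I would not need the full strength of Lemma~\ref{lem:sturmeigbounds} for this corollary; the complementarity identity plus the rank-of-limit facts from Theorem~\ref{thm:paramconv} appear to suffice.
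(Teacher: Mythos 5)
Your proposal is correct and follows essentially the same route as the paper: both hinge on the rank-of-limit facts from Theorem~\ref{thm:paramconv}, the identity $\Za\Xa=\alpha I$ (equivalently $\lambda_i(\Xa)\lambda_{n-i+1}(\Za)=\alpha$) to pin down the extreme blocks as $\Theta(1)$ and $\Theta(\alpha)$, and then a contradiction from $\rank(\bar Z)=q$ (resp.\ $\rank(\bar X)=r$) to rule out $\lambda_i(\Xa)=\OO(\alpha)$ on the middle block; your observation that Lemma~\ref{lem:sturmeigbounds} is not needed here is also consistent with the paper's proof.
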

\begin{proof}
The convergence result of Theorem~\ref{thm:paramconv} implies that the $r$ largest eigenvalues of $\Xa$ and the $q$ largest eigenvalues of $\Za$ converge to positive values, hence are $\Theta(1)$.  Moreover, the relation $\Za = \alpha \Xa^{-1}$ gives that the $q$ smallest eigenvalues of $\Xa$ and the $r$ smallest eigenvalues of $\Za$ converge to $0$ at a rate that is $\Theta(\alpha)$.   

Now let $i \in \{r+1,\dotso,n-q\}$.  The lower bound $\Omega(\alpha)$ holds since, 
\[
\lambda_i(\Xa) \ge \lambda_n(\Xa) = \Theta(\alpha).
\] 
Now suppose, for the sake of contradiction, that~${\lambda_i(\Xa) =\OO(\alpha)}$.  Combining with the lower bound, $\Omega(\alpha)$, we conclude that~${\lambda_i(\Xa) = \Theta(\alpha)}$.  Once again using the relation $\Za = \alpha \Xa^{-1}$ implies that there are $q+1$ eigenvalues of $\Za$ that are bounded away from $0$, contradicting the assumption that $\rank(\bar Z) = q$ and the statement of Theorem~\ref{thm:paramconv}:
\[
\bar Z \in \relint \{ Z \in \Snp \setminus \{0\} : Z = \A^*(y), \ y^Tb = 0, \ y \in \Rm\}.
\]
The remaining bounds on $\Za$ are obtained in similar fashion.
\end{proof}

\subsection{On Singularity Degree and Slow Convergence}
\label{sec:sdhard}
An immediate implication of Corollary~\ref{cor:eigrates} is that fast convergence of eigenvalues does not occur when singularity degree is greater than $1$.
\begin{theorem}
\label{thm:sd1}
Let ${\{ \Xa : \alpha > 0\}}$ be the central path of \eqref{eq:param} for a spectrahedron $\F = \F(\A,b)$ satisfying Assumption~\ref{assump:Fpotshe}.  If $\sd(\F) > 1$, there exists an eigenvalue of $\Xa$ that converges to $0$ at a rate that is not $\OO(\alpha)$.
\end{theorem}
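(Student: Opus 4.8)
The plan is to obtain this as a short consequence of Corollary~\ref{cor:eigrates}: the middle line of \eqref{eq:coreigratesX} already produces an eigenvalue $\lambda_i(\Xa)$ with $\lambda_i(\Xa) = \Omega(\alpha)$ and $\ne \OO(\alpha)$, and since $i > r = \rank(\bar X)$ such an eigenvalue converges to $0$. So the whole task reduces to showing that the index set $[r+1,\,n-q]$ is nonempty when $\sd(\F) > 1$, i.e.~that $r + q \le n-1$, where $r = \rank(\bar X)$ and $q = \rank(\bar Z)$ for the limit point $(\bar X,\bar y,\bar Z)$ of the primal-dual central path \eqref{eq:optimalsystem}.

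First I would record the always-valid inequality $r + q \le n$: from $\Za\Xa = \alpha I$ we get $\bar Z\bar X = 0$ in the limit, so $\range(\bar X) \subseteq \ker(\bar Z)$ and hence $r \le n-q$. The work is to make this strict under $\sd(\F) > 1$. For this I would invoke Theorem~\ref{thm:paramconv}: $\bar Z = \A^*(\bar y)$ with $\bar y^T b = 0$, $\bar Z \succeq 0$, and $\bar Z$ lies in the relative interior of the cone of exposing vectors $\{Z \in \Snp\setminus\{0\} : Z = \A^*(y),\ y^Tb = 0\}$. Hence $\bar Z \ne 0$ (so $q \ge 1$) and, by the relint characterization, $\rank(\bar Z) = q$ is the \emph{maximum} rank of an exposing vector — exactly the kind of vector selected in the first iteration of Algorithm~\ref{algo:FR} when computing $\sd(\F)$. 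Let $V \in \R^{n\times(n-q)}$ have columns spanning $\ker(\bar Z)$; then $\bar Z$ exposes the face $V\Sc^{n-q}_+V^T$ and, as in the description of facial reduction in Section~\ref{sec:FR}, $\F = V\,\F\!\left(\A(V\cdot V^T),b\right)V^T$. Since $V$ has full column rank, $\rank(VRV^T) = \rank(R)$, so the maximum rank over $\F$ equals the maximum rank over the reduced spectrahedron $\F(\A(V\cdot V^T),b)$; because $\bar X \in \relint(\F)$, this common maximum rank is $r$.

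The crux is then a short contradiction argument. Suppose $r = n-q$. Then $\bar X$ is a rank-$(n-q)$ element of $V\Sc^{n-q}_+V^T$, hence $\bar X \in \relint\!\left(V\Sc^{n-q}_+V^T\right)$, and consequently $\face(\{\bar X\}) = V\Sc^{n-q}_+V^T$. Since $\bar X \in \F$, we get $V\Sc^{n-q}_+V^T = \face(\{\bar X\}) \subseteq \face(\F)$, while $\face(\F) \subseteq V\Sc^{n-q}_+V^T$ because $\bar Z$ exposes a face containing $\F$; thus $\face(\F) = V\Sc^{n-q}_+V^T$. Equivalently, the reduced spectrahedron $\F(\A(V\cdot V^T),b)$ has a positive definite point, i.e.~satisfies the Slater condition, so Algorithm~\ref{algo:FR} terminates after this single iteration and $\sd(\F) \le 1$, contradicting $\sd(\F) > 1$. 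Therefore $r \le n-q-1$, so $[r+1,\,n-q]$ is nonempty; choosing any $i$ in it, Corollary~\ref{cor:eigrates} gives $\lambda_i(\Xa) = \Omega(\alpha)$ and $\ne \OO(\alpha)$, and $i > r$ forces $\lambda_i(\Xa) \to 0$. This is the desired eigenvalue.

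The main obstacle I anticipate is the bookkeeping in the third paragraph: correctly identifying $\bar Z$ as a legitimate maximum-rank first-iteration exposing vector (using the relative-interior statement of Theorem~\ref{thm:paramconv} to pin down both that $\bar Z \ne 0$ and that its rank equals the maximal exposing-vector rank $q$), and then faithfully translating "$r = n-q$" into "the once-reduced problem satisfies Slater, hence $\sd(\F) \le 1$". One must also be careful that $\bar X \in \relint(\F)$ genuinely yields $\rank(\bar X) = r$ equal to the global maximum rank over $\F$, which again is part of Theorem~\ref{thm:paramconv}. Everything beyond this is routine rank arithmetic together with a direct appeal to Corollary~\ref{cor:eigrates}.
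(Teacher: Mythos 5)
Your proposal is correct and matches the paper's intended route: the paper simply declares Theorem~\ref{thm:sd1} ``an immediate implication of Corollary~\ref{cor:eigrates},'' and the only substantive gap to fill is showing that the index range $[r+1,\,n-q]$ is nonempty, i.e.~$r+q<n$, which you do via the standard contrapositive (if $r+q=n$ then $\bar Z$'s one facial-reduction step already yields a Slater point, so $\sd(\F)\le 1$). The observation $\bar Z\bar X=0 \Rightarrow r+q\le n$, the use of Theorem~\ref{thm:paramconv}'s relint statements to identify $q$ as the maximal exposing rank and $r$ as the maximal feasible rank, and the final appeal to the middle case of \eqref{eq:coreigratesX} are all exactly the pieces the paper is relying on.
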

For the second theorem of this section we need the following technical results.
\begin{lemma}
\label{lem:yaTb}
Let ${\{ (\Xa,\ya,\Za) : \alpha > 0\}}$ be the primal-dual central path of \eqref{eq:optimalsystem} for a spectrahedron $\F = \F(\A,b)$ satisfying Assumption~\ref{assump:Fpotshe}. Suppose that $b \ne 0$ and let ${v^1,\dotso,v^{m-1} \in \Rm}$ form a basis for $b^{\perp}$.  Let $\bar \alpha > 0$ and for all $\alpha \in (0,\bar \alpha)$ let ${\beta(\alpha)}$ and ${\nu_1(\alpha), \dotso, \nu_{m-1}(\alpha)}$ be real coefficients such that,
\[
\ya = \beta(\alpha)b + \sum_{i=1}^{m-1} \nu_i(\alpha) v^i. 
\]
Then for all $\alpha \in (0,\bar \alpha)$,
\[
\lvert \beta(\alpha) \rvert = \Theta(\alpha).
\]
\end{lemma}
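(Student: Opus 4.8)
The plan is to extract the claim $|\beta(\alpha)| = \Theta(\alpha)$ from the complementarity equation $\Za\Xa = \alpha I$ together with the convergence result of Theorem~\ref{thm:paramconv}. The key observation is that $\langle \Xa, \Za \rangle = \trace(\Za \Xa) = n\alpha$, so the inner product of the primal and dual iterate is exactly $n\alpha$. On the other hand, $\Za = \A^*(\ya)$, so $\langle \Xa, \Za \rangle = \langle \Xa, \A^*(\ya) \rangle = \langle \A(\Xa), \ya \rangle = \langle \ba, \ya \rangle$, using $\A(\Xa) = \ba = b + \alpha\A(B)$. Substituting the decomposition ${\ya = \beta(\alpha)b + \sum_i \nu_i(\alpha) v^i}$ and using $\langle b, v^i\rangle = 0$, we get
\[
n\alpha = \langle \ba, \ya \rangle = \beta(\alpha)\lVert b \rVert^2 + \alpha \langle \A(B), \ya \rangle.
\]
The term $\langle \A(B), \ya \rangle = \langle B, \Za \rangle$ is bounded on $(0,\bar\alpha)$ because $\Za \to \bar Z$ by Theorem~\ref{thm:paramconv} and $B$ is fixed. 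Hence $\beta(\alpha)\lVert b \rVert^2 = n\alpha - \alpha\,\OO(1) = \OO(\alpha)$, which gives the upper bound $|\beta(\alpha)| = \OO(\alpha)$ immediately (recall $b \ne 0$, so $\lVert b \rVert^2 > 0$).

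For the matching lower bound $|\beta(\alpha)| = \Omega(\alpha)$, I would argue by contradiction: suppose $\beta(\alpha_k) / \alpha_k \to 0$ along some sequence $\alpha_k \searrow 0$. Then from the identity above, $n = \beta(\alpha_k)\lVert b\rVert^2/\alpha_k + \langle B, Z(\alpha_k)\rangle \to 0 + \langle B, \bar Z\rangle$, forcing $\langle B, \bar Z \rangle = n > 0$. This alone is not a contradiction, so I need a finer relation. The natural route is to also use that $\ya^T b = \beta(\alpha)\lVert b\rVert^2$ and to connect $\beta(\alpha)$ to the dual feasibility/complementarity structure more directly. Specifically, $\bar Z = \A^*(\bar y)$ with $\bar y^T b = 0$ by Theorem~\ref{thm:paramconv}, so $\beta(\alpha) \to 0$ already; the question is the \emph{rate}. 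Here I would use $\langle \ba, \ya\rangle = n\alpha$ rewritten as
\[
\beta(\alpha) = \frac{\alpha\bigl(n - \langle B, \Za\rangle\bigr)}{\lVert b \rVert^2},
\]
which is an \emph{exact} formula for $\beta(\alpha)$. Thus $|\beta(\alpha)| = \Theta(\alpha)$ holds as long as $n - \langle B, \bar Z \rangle \ne 0$, i.e.\ $\langle B, \bar Z \rangle \ne n$. So the real content is to rule out $\langle B, \bar Z \rangle = n$.

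The main obstacle, then, is exactly this nondegeneracy step: showing $\langle B, \bar Z \rangle \ne n$. I would approach it by taking a limit in a second identity. Note $\langle \ba, \bar y \rangle$: since $\bar y^T b = 0$ and $\ba = b + \alpha \A(B)$, we have $\langle \ba, \bar y\rangle = \alpha \langle B, \bar Z\rangle$. Also, $\langle b, \ya \rangle = \beta(\alpha)\lVert b\rVert^2$ and, using $\ba = b + \alpha\A(B)$ again together with $\A(\bar X) = b$ (limit of $\A(\Xa) = \ba$), one can compare $\langle \Xa, \bar Z\rangle = \langle \A(\Xa),\bar y\rangle = \langle \ba, \bar y\rangle = \alpha\langle B,\bar Z\rangle$ with $\langle \bar X, \Za\rangle = \langle b + \alpha\A(B) - \alpha\A(B), \ya\rangle$... the cleanest version: by complementary slackness of the limit, $\langle \bar X, \bar Z\rangle = 0$, so $\langle \Xa, \bar Z\rangle \to 0$ and $\langle \bar X, \Za\rangle \to 0$, and a Taylor-type expansion of $\langle \Xa,\Za\rangle = n\alpha$ around the limit combined with $\langle \bar X, \Za\rangle = \langle b, \ya\rangle = \beta(\alpha)\lVert b\rVert^2$ (since $\bar X \in \F$ so $\A(\bar X) = b$) and $\langle \Xa, \bar Z\rangle = \alpha\langle B,\bar Z\rangle$ will pin down the leading behavior. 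I expect the author's proof uses essentially the exact formula $\beta(\alpha) = \alpha(n - \langle B,\Za\rangle)/\lVert b\rVert^2$ and then a short argument — perhaps invoking $\bar Z \in \relint\{Z \in \Snp\setminus\{0\}: Z = \A^*(y),\, y^Tb = 0\}$ and properties of $B \succ 0$, or a dimension/strict-feasibility argument on $\F + \alpha B$ — to conclude $\langle B, \bar Z\rangle < n$ strictly (intuitively, $\langle B, \bar Z\rangle = \langle B, \bar Z\rangle - \langle \bar X, \bar Z \rangle$ and $B$ "overshoots" $\bar X$ into the interior only by a controlled amount). That nondegeneracy is where the genuine work lies; everything else is bookkeeping with the complementarity identity.
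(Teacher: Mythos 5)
Your derivation of the identity $y(\alpha)^T b = \alpha\bigl(n - \langle B, \Za\rangle\bigr)$, hence $\beta(\alpha) = \alpha\bigl(n - \langle B, \Za\rangle\bigr)/\lVert b\rVert^2$, is exactly the chain of equalities in the paper's proof (using $b = b(\alpha)-\alpha\A(B)$, $\A(\Xa)=b(\alpha)$, $\A^*(\ya)=\Za=\alpha\Xa^{-1}$, and $\langle b,v^i\rangle=0$). The $\OO(\alpha)$ direction then follows from boundedness of $\langle B,\Za\rangle$, as you say. So up to that point you have reproduced the paper's argument.

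Where you diverge is in being explicit that this identity alone does \emph{not} deliver the lower bound $\Omega(\alpha)$: one must rule out $\langle B,\bar Z\rangle = n$, since $\langle B,\Za\rangle\to\langle B,\bar Z\rangle$. You are right to flag this as the real content. The paper's own proof simply asserts that ``$\langle B,\A^*(\ya)\rangle$ is bounded below and above $\Rightarrow |y(\alpha)^Tb| = \Theta(\alpha)$,'' and boundedness of $\langle B,\Za\rangle$ by itself only gives $\OO(\alpha)$, not $\Omega(\alpha)$; so you have correctly identified a step that the paper treats as immediate but does not justify. Your proposal, however, also does not close it: the musings about $\langle\Xa,\bar Z\rangle$, $\langle\bar X,\Za\rangle$, Taylor expansions, etc., do not reach a conclusion. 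A route that does work and stays close to your bookkeeping is this: note $y(\alpha)^T b = \langle\A^*(\ya),\bar X\rangle = \langle\Za,\bar X\rangle$ since $\A(\bar X)=b$; write $\bar X$ in spectral form $\diag(\Lambda,0)$ with $\Lambda\in\Srpp$ and partition $\Xa$ conformally; by the Schur complement formula $[\Xa^{-1}]_{11}\succeq [\Xa]_{11}^{-1}$, and $[\Xa]_{11}\to\Lambda$, so $\lambda_{\min}\bigl([\Za]_{11}\bigr)=\alpha\,\lambda_{\min}\bigl([\Xa^{-1}]_{11}\bigr)\ge c\,\alpha$ for small $\alpha$; then $\langle\Za,\bar X\rangle = \trace\bigl([\Za]_{11}\Lambda\bigr)\ge \lambda_{\min}\bigl([\Za]_{11}\bigr)\trace(\Lambda) = \Omega(\alpha)$, which is precisely the missing lower bound. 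In short: same approach as the paper, correct identification of a gap that the paper also has, but the gap itself remains open in your write-up.
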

\begin{proof}
By definition of $b(\alpha)$ in \eqref{eq:param} and by \eqref{eq:optimalsystem} we have,
\begin{align*}
 y(\alpha)^Tb  &=  y(\alpha)^T(b(\alpha) - \alpha \A(B))  \\
&=  y(\alpha)^T(\A(X(\alpha)) - \alpha \A(I))  \\
&=  \langle \A^*(y(\alpha)), X(\alpha) \rangle - \alpha \langle B, \A^*(y(\alpha)) \rangle  \\
&= \alpha \langle X(\alpha)^{-1}, X(\alpha) \rangle - \alpha \langle B, \A^*(y(\alpha)) \rangle \\
&= \alpha (n - \langle B, \A^*(y(\alpha)) \rangle).
\end{align*}
Now $\langle B, \A^*(y(\alpha)) \rangle$ is bounded below and above on $\alpha \in (0,\bar \alpha)$, implying that~${\lvert y(\alpha)^Tb \rvert = \Theta(\alpha)}$.  On the other hand, ${\{v^1, v^2, \dotso, v^{m-1} \} \in b^{\perp}}$ by construction.  Therefore ${y(\alpha)^Tb = \beta(\alpha)\lVert b \rVert^2}$, yielding,
\[
\lvert \beta(\alpha) \rvert = \frac{\lvert y(\alpha)^Tb \rvert}{\lVert b \rVert^2} = \Theta(\alpha),
\]
as desired.
\end{proof}
\begin{lemma}
\label{lem:Zprincipal}
Let ${\{ (\Xa,\ya,\Za) : \alpha > 0\}}$ be the primal-dual central path of \eqref{eq:optimalsystem} for a spectrahedron $\F = \F(\A,b)$ satisfying Assumption~\ref{assump:Fpotshe}.  Let $\hat Z(\alpha) \in \Sc^{\hat n}$ be a principal submatrix of $\Za$, for some $\hat n \in [r+1,n-1]$.  Then for all $\alpha \in (0, \bar \alpha)$ for some fixed $\bar \alpha > 0$ it holds that,
\[
\lVert \hat Z(\alpha) \rVert = \Omega \left(\alpha^{1-2^{-(\sd(\F)-1)}} \right) \ne \OO(\alpha).
\]
\end{lemma}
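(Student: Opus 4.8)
The plan is to derive the estimate from three facts: the identity $\Za = \alpha \Xa^{-1}$ coming from \eqref{eq:optimalsystem}, Sturm's refined eigenvalue bound for $\Xa$ recorded in Lemma~\ref{lem:sturmeigbounds}, and the interlacing eigenvalue theorem for principal submatrices. Write $d := \sd(\F)$, so the exponent function of Lemma~\ref{lem:sturmeigbounds} satisfies $\xi(1) = 2^{-(d-1)}$ and the target exponent is $1 - \xi(1)$. First I would check that Lemma~\ref{lem:sturmeigbounds} applies to the central path $\{\Xa : \alpha > 0\}$ of \eqref{eq:param}. Under Assumption~\ref{assump:Fpotshe} the spectrahedron $\F$ satisfies Assumption~\ref{assump:F}, and the discussion preceding Theorem~\ref{thm:paramconv}, together with Theorem~\ref{thm:paramconv} itself, shows that this path satisfies Assumption~\ref{assump:para}. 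It remains to verify $\epsilon^b(\Xa,\F) = \OO(\alpha)$: we have $\dist(\Xa, \Snp) = 0$ since $\Xa \succ 0$, and since $\A$ is surjective and $\A(\Xa) = \ba = b + \alpha\A(B)$, the distance from $\Xa$ to $\{X : \A(X) = b\}$ is at most a fixed multiple of $\lVert \alpha \A(B)\rVert = \OO(\alpha)$. With the hypotheses of Lemma~\ref{lem:sturmeigbounds} in place, and noting that a maximum rank exposing vector is nonzero so that $\rank(Z^1) \ge 1$ and hence $r+1 \in \II^1$, we obtain constants $M > 0$ and $\bar\alpha > 0$ with $\lambda_{r+1}(\Xa) \le M\alpha^{\xi(1)}$ for all $\alpha \in (0,\bar\alpha)$.

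Next I would transfer the information to $\Za$. From \eqref{eq:optimalsystem} we have $\Za\Xa = \alpha I$ with $\Za, \Xa \succ 0$, so the two matrices are simultaneously diagonalizable and $\lambda_i(\Za) = \alpha/\lambda_{n+1-i}(\Xa)$ for each $i$; in particular $\lambda_{n-r}(\Za) = \alpha/\lambda_{r+1}(\Xa) \ge M^{-1}\alpha^{1-\xi(1)}$ on $(0,\bar\alpha)$. Now let $\hat Z(\alpha) \in \Sc^{\hat n}$ be any principal submatrix of $\Za$ with $\hat n \in [r+1, n-1]$; it is positive definite, being a principal submatrix of $\Za \succ 0$. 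The interlacing eigenvalue theorem gives $\lambda_1(\hat Z(\alpha)) \ge \lambda_{n-\hat n+1}(\Za)$, and since $\hat n \ge r+1$ we have $n - \hat n + 1 \le n - r$, so $\lambda_1(\hat Z(\alpha)) \ge \lambda_{n-r}(\Za) \ge M^{-1}\alpha^{1-\xi(1)}$. Because $\lVert \hat Z(\alpha)\rVert \ge \lambda_1(\hat Z(\alpha))$ (with equality for the operator $2$-norm, and an inequality for the Frobenius norm), this yields $\lVert \hat Z(\alpha)\rVert = \Omega(\alpha^{1-2^{-(d-1)}})$ for all $\alpha \in (0,\bar\alpha)$, which is the claimed bound. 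Finally, since $1 - 2^{-(d-1)} < 1$ for every finite $d \ge 1$, the ratio $\alpha^{1-2^{-(d-1)}}/\alpha$ tends to infinity as $\alpha \searrow 0$, so this lower bound in particular rules out $\lVert \hat Z(\alpha)\rVert = \OO(\alpha)$.

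I do not expect a genuine obstacle here; once Lemma~\ref{lem:sturmeigbounds} is available the proof is a short chain of inequalities. The one place to be careful is the index bookkeeping under the correspondence $\lambda_i(\Za) = \alpha/\lambda_{n+1-i}(\Xa)$ and under interlacing: the hypothesis $\hat n \ge r+1$ is precisely what lets interlacing reach the eigenvalue $\lambda_{n-r}(\Za) = \alpha/\lambda_{r+1}(\Xa)$, for which Sturm's bound on $\lambda_{r+1}(\Xa)$ (the slowest-vanishing eigenvalue of $\Xa$, sitting in the block $\II^1$) is the relevant one, rather than a smaller eigenvalue of $\Za$ corresponding to a $\Theta(\alpha)$ eigenvalue of $\Xa$. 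The upper constraint $\hat n \le n-1$ plays no role in the lower bound above; it is presumably present only to keep $\hat Z(\alpha)$ a proper submatrix in the subsequent application.
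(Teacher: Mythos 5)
Your proof is correct and mirrors the paper's argument: interlacing reduces the claim to bounding $\lambda_{n-r}(\Za)$, and the relation $\Za = \alpha\Xa^{-1}$ together with the eigenvalue bound $\lambda_{r+1}(\Xa) = \OO(\alpha^{\xi(1)})$ from Lemma~\ref{lem:sturmeigbounds} produces the stated $\Omega(\alpha^{1-2^{-(\sd(\F)-1)}})$ rate. The paper's one-line proof cites only Corollary~\ref{cor:eigrates}, which by itself gives merely $\lambda_{n-r}(\Za)\ne\Omega(1)$ rather than the refined exponent; the chain you spell out (Lemma~\ref{lem:sturmeigbounds} applied to $\lambda_{r+1}(\Xa)$, then inversion, then interlacing) is where the bound actually comes from, so your more explicit treatment is the accurate one.
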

\begin{proof}
By interlacing eigenvalues and Corollary~\ref{cor:eigrates} we have,
\[
\lVert \hat Z(\alpha) \rVert_2 = \lambda_1(\hat Z(\alpha)) \ge \lambda_{n-r}(\Za) = \Omega \left(\alpha^{1-2^{-(\sd(\F)-1)}} \right).
\]
The bound also holds for any other norm on $\Sn$. 
\end{proof}
Now we show that after a suitable orthogonal transformation, $\Za$ admits a block partition where at least $\sd(\F)$ of these blocks converge to $0$, each at a different rate. 
\begin{theorem}
\label{thm:sddetect}
Let ${\{ (\Xa,\ya,\Za) : \alpha > 0\}}$ be the primal-dual central path of \eqref{eq:optimalsystem} for a spectrahedron $\F = \F(\A,b)$ satisfying Assumption~\ref{assump:Fpotshe}.  Let $\bar \alpha >0$ be fixed.  Then there exists an integer ${d \in [\sd(\F),\bar m]}$ where,
\[
\bar m = \begin{cases}
m \quad &\text{if } b=0, \\
m-1 &\text{otherwise},
\end{cases}
\]  
and a suitable orthogonal transformation of $\F$, such that,
\[
\Za = \begin{bmatrix}
Z_{d+1}(\alpha) & * & \cdots & * \\
* & Z_d(\alpha) & \cdots & * \\
\vdots & \vdots & \ddots & \vdots \\
*  & * & \cdots & Z_1(\alpha)
\end{bmatrix},
\]
where for all $\alpha \in (0,\bar \alpha)$ it holds that,
\begin{enumerate}[$(i)$]
\item $Z_1(\alpha) \to S_1 \succ 0$ and $Z_i(\alpha) \to 0$ for all $i \in \{2,\dotso, d+1\}$,
\label{itm:1sddetect}
\item $\frac{\lambda_{\min}(Z_i(\alpha))}{\lambda_{\max}(Z_{i+1}(\alpha))} \to \infty$ for all $i \in \{1,\dotso,d\}$,
\label{itm:2sddetect}
\item $\lambda_{\min}(Z_i(\alpha)) = \Theta \left(\lambda_{\max}(Z_i(\alpha)) \right)$ for all $i \in \{1,\dotso,d+1\}$,
\label{itm:3sddetect}
\item $\lVert Z_{d+1}(\alpha)\rVert = \Theta(\alpha)$.
\label{itm:4sddetect}
\end{enumerate}
\end{theorem}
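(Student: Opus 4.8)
The plan is to construct the orthogonal transformation and establish $(i)$--$(iv)$ simultaneously with the lower bound $d\ge\sd(\F)$, by a recursive ``peeling'' of $\Za$ that runs in parallel with a facial reduction of $\F$; the upper bound $d\le\bar m$ will then come from a dimension count on the multipliers produced along the way. I would work throughout with $\Za=\alpha\Xa^{-1}$ and use that the central path, being the unique solution (by strict concavity of $\log\det$) of a polynomial system in $(\Xa,\ya,\Za,\alpha)$, is semi-algebraic in $\alpha$; hence every principal submatrix of $\Za$ taken in a fixed basis has a well-defined leading order $\alpha^{s}$ as $\alpha\searrow0$, and by Corollary~\ref{cor:eigrates} this exponent lies in $[0,1]$, equals $0$ exactly when the submatrix tends to a positive definite matrix, and equals $1$ exactly when the submatrix is $\Theta(\alpha)$.

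First I would set $\F^{(0)}:=\F$, $M_0:=\Rn$, $Z^{(0)}(\alpha):=\Za$. By Theorem~\ref{thm:paramconv}, $\bar Z^{(0)}:=\lim_{\alpha\searrow0}\Za=\bar Z$ is a nonzero positive semidefinite facial reduction exposing vector of $\F$; put $L_0:=\range\bar Z^{(0)}$ and $Z_1(\alpha):=\Za|_{L_0}\to\bar Z^{(0)}|_{L_0}\succ0$. Inductively, given $L_0,\dots,L_{k-1}$ and $M_k:=(L_0\oplus\cdots\oplus L_{k-1})^\perp$, write $Z^{(k)}(\alpha):=\Za|_{M_k}=(\A^{(k)})^*(\ya)$ with $\A^{(k)}(\cdot):=\A(U_k\cdot U_k^T)$, $U_k$ an orthonormal basis of $M_k$. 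If $\dim M_k=r$ (the maximum rank over $\F$), stop. Otherwise $Z^{(k)}(\alpha)$ is a principal submatrix of $\Za$ of order in $[r+1,n-1]$, so by Lemma~\ref{lem:Zprincipal} its leading order $\alpha^{s_{k-1}}$ satisfies $s_{k-1}<1$; set $\bar Z^{(k)}:=\lim_{\alpha\searrow0}\alpha^{-s_{k-1}}Z^{(k)}(\alpha)$, a nonzero positive semidefinite matrix. Since $(\A^{(k)})^*(\bar y)=\lim Z^{(k)}(\alpha)=0$, one has $\alpha^{-s_{k-1}}Z^{(k)}(\alpha)=(\A^{(k)})^*\!\big(\alpha^{-s_{k-1}}(\ya-\bar y)\big)$, and projecting the multiplier onto $\range\A^{(k)}$ yields $\bar Z^{(k)}=(\A^{(k)})^*(\bar y^{(k)})$; moreover $b^T\bar y^{(k)}=\lim\alpha^{-s_{k-1}}b^T\ya=\lim\Theta(\alpha^{\,1-s_{k-1}})=0$ by Lemma~\ref{lem:yaTb}, where $s_{k-1}<1$ is used. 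Hence $\bar Z^{(k)}$ is a legitimate facial reduction exposing vector for the $k$-times reduced spectrahedron $\F^{(k)}=\F(\A^{(k)},b)$. Put $L_k:=\range\bar Z^{(k)}$, $Z_{k+1}(\alpha):=\Za|_{L_k}=\alpha^{s_{k-1}}\big(\bar Z^{(k)}|_{L_k}+o(1)\big)$, and continue. Since each $\bar Z^{(k)}$ annihilates $\range\bar X$, the dimensions $\dim M_k$ strictly decrease yet stay $\ge r$, so the recursion halts at some $k^\ast$ with $\dim M_{k^\ast}=r$; set $d:=k^\ast$, $Z_{d+1}(\alpha):=\Za|_{M_{k^\ast}}$, and let $Q$ reorder $\Rn=M_{k^\ast}\oplus L_{d-1}\oplus\cdots\oplus L_0$ so that $Z_1$ sits in the lower-right corner of $Q\Za Q^T$.

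To verify $(i)$--$(iv)$ I would argue as follows. Writing $s_{-1}:=0$, each $Z_{k+1}(\alpha)=\alpha^{s_{k-1}}(S^{(k)}+o(1))$ with $S^{(k)}:=\bar Z^{(k)}|_{L_k}\succ0$, which gives $Z_1\to S_1:=S^{(0)}\succ0$, $Z_{k+1}\to0$ for $k\ge1$, and $\lambda_{\min}(Z_{k+1})=\Theta(\alpha^{s_{k-1}})=\Theta(\lambda_{\max}(Z_{k+1}))$. The exponents strictly increase, $0=s_{-1}<s_0<\cdots<s_{d-2}<1=:s_{d-1}$, because $Z^{(k)}(\alpha)$ is the restriction of $\alpha^{s_{k-2}}(\bar Z^{(k-1)}+o(1))$ to $\ker\bar Z^{(k-1)}$ and so is $o(\alpha^{s_{k-2}})$; therefore $\lambda_{\min}(Z_i)/\lambda_{\max}(Z_{i+1})=\Theta(\alpha^{\,s_{i-2}-s_{i-1}})\to\infty$, which is $(ii)$. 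For the last block, $\dim M_{k^\ast}=r$ and $M_{k^\ast}\supseteq\range\bar X$ (as $\bar Z^{(k)}\bar X=0$ for all $k$), while $\range\bar X$ has dimension $r$, so $M_{k^\ast}=\range\bar X$; then $\langle\Za,\bar X\rangle=\ya^T\A(\bar X)=\ya^Tb=\Theta(\alpha)$ by Lemma~\ref{lem:yaTb}, and since $\bar X|_{M_{k^\ast}}\succ0$ this forces $\trace(Z_{d+1}(\alpha))=\OO(\alpha)$, whereas interlacing gives $\lambda_{\min}(Z_{d+1})\ge\lambda_n(\Za)=\Theta(\alpha)$; hence $\|Z_{d+1}(\alpha)\|=\Theta(\alpha)$ and $\lambda_{\min}(Z_{d+1})=\Theta(\lambda_{\max}(Z_{d+1}))$, completing $(iii)$, $(iv)$, and the case $i=d+1$ of $(i)$.

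It remains to bound $d$. Each step above is a valid facial reduction step applied to $\F$, so $\sd(\F^{(k)})\ge\sd(\F)-k$; but at the halting index $\F^{(k^\ast)}$ contains the rank-$r$, hence full-rank, representative of $\bar X$, so Slater holds and $\sd(\F^{(k^\ast)})=0$, giving $d=k^\ast\ge\sd(\F)$. For the upper bound, $\bar y^{(k)}\in\range\A^{(k)}$ while $(\A^{(k+1)})^*(\bar y^{(k)})=0$, so $\bar y^{(k)}\notin\range\A^{(k+1)}$; thus $\dim\range\A^{(k)}$ drops strictly from $m=\dim\range\A$ down to a subspace still containing $b$, forcing $d=k^\ast\le m-1=\bar m$ (the value $\bar m=m$ pertains to the case $b=0$, which cannot occur here since a bounded nonzero $\F(\A,b)$ has $b\neq0$). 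The hard part will be the analytic underpinnings: one must know that $\Za$ and its principal submatrices genuinely admit $\Theta(\alpha^{s})$ leading behavior uniformly on an interval $(0,\bar\alpha)$ — this is exactly where semi-algebraicity/analyticity of the central path is indispensable — and one must check carefully that each rescaled limit $\bar Z^{(k)}$ is a bona fide facial reduction certificate whose multiplier is orthogonal to $b$, which is precisely the content supplied by Lemma~\ref{lem:yaTb}, Lemma~\ref{lem:Zprincipal}, and Corollary~\ref{cor:eigrates}.
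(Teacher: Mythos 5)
Your proof is correct and arrives at the theorem along a genuinely different, more coordinate-free route than the paper's. The paper works entirely on the dual side: it fixes a nested basis of $b^{\perp}$ built from $\bar y=y^1$ and (eventually) $y^2,w^1,\dotsc$, expands $\ya$ in that basis, tracks the relative growth of the coefficients, and passes to limit points of the \emph{normalized} remainder to extract each new exposing vector; the case $(\A^*(y^2))_{11}=0$ forces the auxiliary $w^j$ bookkeeping. You instead work on the matrix side, restricting $\Za$ to the nested kernels $M_k=\ker\bar Z^{(0)}\cap\cdots\cap\ker\bar Z^{(k-1)}$ and rescaling by the leading exponent $\alpha^{s_{k-1}}$; this automatically yields a nonzero exposing vector at each step, so the $w^j$ case split disappears, and your argument that $\langle\Za,\bar X\rangle=\ya^Tb=\Theta(\alpha)$ gives $(iii)$--$(iv)$ for the terminal block $Z_{d+1}$ more directly than the paper's block-norm discussion. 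The price is that you lean explicitly on the existence of a well-defined leading order $\alpha^{s}$ for each restricted submatrix, i.e.\ on a Puiseux (semi-algebraic) expansion of the central path; you flag this yourself as the load-bearing analytic fact, and it is indeed the one step not fully discharged in your write-up. The paper's limit-point device avoids invoking Puiseux outright (at the cost of some of its own unacknowledged subsequence issues), so the two proofs trade cleanliness of the iteration against the strength of the analyticity input. A small point you got right and the paper left implicit: under Assumption~\ref{assump:Fpotshe} a bounded $\F\ne\{0\}$ forces $b\ne0$, so the $\bar m=m$ branch of the statement is vacuous and the upper bound is really $d\le m-1$, exactly as your range-dimension count delivers. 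Your use of Lemma~\ref{lem:yaTb}, Lemma~\ref{lem:Zprincipal} and Corollary~\ref{cor:eigrates}, and the descent argument showing $\dim M_k$ decreases strictly while remaining $\ge r$, all check out.
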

\begin{proof}
We assume, without loss of generality, that,
\begin{equation}
\label{eq:faceFsddetect}
\face(\F) = \begin{bmatrix}
\Srp & 0 \\
0 & 0
\end{bmatrix}.
\end{equation}

As above, let $(\bar X, \bar y, \bar Z)$ be the limit point of the primal-dual central path.  We know that $\bar Z$ is an exposing vector for a face containing $\face(\F)$.  Thus ${\bar y^Tb = 0}$.  Without loss of generality we may assume that,
\begin{equation}
\label{eq:0sddetect}
\bar Z =: \begin{bmatrix}
0 & 0 \\
0 & S_1
\end{bmatrix}, \ S_1 \succ 0.
\end{equation}
Now we define $y^1 := \bar y$ and choose ${v^1,\dotso,v^{m_v} \in \spanl \{b,y^1\}^{\perp}}$ for some ${m_v \le m}$ so that the collection of vectors~$\{y^1,v^1, \dotso, v^{m_v}\}$ is a basis for $b^{\perp}$.  Note that when $b\ne 0$, then~$\{b,y^1,v^1, \dotso, v^{m_v}\}$ is also a basis for $\Rm$.  

Now for each ${\alpha > 0}$ there exist coefficients $\beta(\alpha)$ and ${\nu_1(\alpha), \dotso, \nu_{m_v}(\alpha)}$ and $\gamma_1(\alpha)$ such that,
\begin{equation}
\label{eq:1sddetect}
y(\alpha) = \gamma_1(\alpha)y^1 + \beta(\alpha)b + \sum_{i=1}^{m_v} \nu_i(\alpha)v^i.
\end{equation}
Since $\ya \to y^1$ we have $\gamma_1(\alpha) \to 1$ and $\gamma_1(\alpha)$ dominates the other coefficients.  That is,
\begin{equation}
\label{eq:2sddetect}
\lim_{\alpha \searrow 0} \frac{\sum_{i=1}^{m_v} \lvert \nu_i(\alpha) \rvert }{\gamma_1(\alpha)} = 0.
\end{equation}
Now let us consider a block partition of $\Za$ according to the block partition of $\bar Z$ in \eqref{eq:0sddetect}.  We have,
\begin{equation}
\label{eq:3sddetect}
Z(\alpha) = \begin{bmatrix}
0 & 0 \\
0 & \gamma_1(\alpha)S_1
\end{bmatrix} + \A^* \left(\beta(\alpha)b + \sum_{i=1}^{m_v} \nu_i(\alpha)v^i \right).
\end{equation}
Note that the two diagonal blocks of $\Za$ in \eqref{eq:3sddetect} possess properties~\ref{itm:1sddetect} and~\ref{itm:2sddetect}. 

Let $Z_{11}(\alpha)$ denote the upper left block of $\Za$.  We consider two possibilities.  First, suppose that $\lVert Z_{11}(\alpha) \rVert = \OO(\alpha)$.  In this case these two diagonal blocks also satisfy properties~\ref{itm:3sddetect} and~\ref{itm:4sddetect} and we let $d=1$.  That $d \le \bar m$ is easy to see.  Next, Lemma~\ref{lem:Zprincipal} implies that $Z_{11}(\alpha)$ has at most $r$ rows.  Moreover, by our assumption on the facial structure of $\face(\F)$ in \eqref{eq:faceFsddetect} we conclude that $Z_{11}(\alpha)$ has exactly $r$ rows, otherwise $\bar Z$ exposes a face that is strictly smaller than $\face(\F)$.  Thus we have $\sd(\F) = 1 \le d$, as desired.

The second possibility is that $\lVert Z_{11}(\alpha) \rVert \ne \OO(\alpha)$.   In this case, at least one of the coefficients other than $\gamma_1(\alpha)$ converges to $0$ at a rate not equal to $\OO(\alpha)$.  This coefficient is not $\beta(\alpha)$, since Lemma~\ref{lem:yaTb} implies that ${\beta(\alpha) = \Theta(\alpha)}$ when $b \ne 0$.  When $b=0$ we may set $\beta(\alpha) = 0$ as it is irrelevant.  Thus we conclude that $\lvert \nu_i(\alpha) \rvert \ne \OO(\alpha)$ for some $i \in \{1,\dotso,m_v\}$.

Now let, $y^2$ be a limit point of $\beta(\alpha)b + \sum_{i=1}^{m_v} \nu_i(\alpha)v^i$, after normalizing.  By the arguments above, ${y^2 \in \spanl \{v^1,\dotso, v^{m_v}\}}$ and thus ${(y^2)^Tb = 0}$.  Secondly, $(\A^*(\beta(\alpha)b + \sum_{i=1}^{m_v} \nu_i(\alpha)v^i) )_{11}$ is positive definite for every $\alpha>0$ by \eqref{eq:3sddetect}. This implies that ${(\A^*(y^2))_{11} \succeq 0}$.  Thus if $(\A^*(y^2))_{11}$ is not the zero matrix it is an exposing vector in the second step of facial reduction.  

Let us first address the case ${(\A^*(y^2))_{11} = 0}$.  Here we let $w^1 := y^2$ and choose 
\[
v^1, \dotso, v^{m_v} \in \spanl \{b,y^1,w^1\}^{\perp}
\] 
for some $m_v$, different than the previously used $m_v$, so that ${\{y^1,w^1, v^1, \dotso, v^{m_v}\}}$ is a basis for~$b^{\perp}$.  Now we repeat the above process until we obtain a new $y^2$ such that ${(\A^*(y^2))_{11} \ne 0}$,~i.e.,~we are in the second case.

Now we may assume that we have obtained $y^2$ as above and ${(\A^*(y^2))_{11} \ne 0}$.  We also assume, without loss of generality that,
\begin{equation}
\label{eq:4sddetect}
(\A^*(y^2))_{11} = \begin{bmatrix}
0 & 0 \\
0 & S_2
\end{bmatrix}, \ S_2 \succ 0.
\end{equation}
Then the matrix,
\begin{equation}
\label{eq:5sddetect}
\begin{bmatrix}
0 & 0 & 0 \\
0 & S_2 & 0 \\
0 & 0 & S_1
\end{bmatrix},
\end{equation}
exposes a face containing $\face(\F)$ and this face is smaller than the one exposed by $\bar Z$.  In other, words, we have obtained a better exposing vector.  Let us assume that we have accumulated $m_w$ vectors of the type $w^1$ obtained in the case ${(\A^*(y^2))_{11} = 0}$.  Then we choose,
\begin{equation}
\label{eq:5asddetect}
v^1,\dotso, v^{m_v} \in \spanl \{b,y^1,y^2,w^1,\dotso,w^{m_w} \}^{\perp},
\end{equation}
so that $\{y^1,y^2,w^1,\dotso,w^{m_w},v^1,\dotso, v^{m_v}\}$ is a basis for $b^{\perp}$.
As above, there exist coefficients, 
\begin{equation}
\label{eq:5csddetect}
\beta(\alpha), \ \gamma_1(\alpha), \gamma_2(\alpha), \ \omega_1(\alpha),\dotso,\omega_{m_w}(\alpha), \ \nu_1(\alpha), \dotso, \nu_{m_v}(\alpha),
\end{equation}
such that,
\[
\ya = \beta(\alpha)b + \sum_{i=1}^{2}\gamma_i(\alpha)y^i + \sum_{i=1}^{m_w} \omega_i(\alpha)w^i + \sum_{i=1}^{m_v}\nu_i(\alpha)v^i.
\]
Then,
\begin{equation}
\Za = \begin{bmatrix}
0 & 0 & 0 \\
0 & \gamma_2(\alpha)S_2 & 0 \\
0 & 0 & \gamma_1(\alpha) S_1
\end{bmatrix} + \A^* \left( \beta(\alpha)b+\sum_{i=1}^{m_v}\nu_i(\alpha)v^i+\sum_{i=1}^{m_w} \omega_i(\alpha)w^i\right),
\end{equation}
where the upper left block is $\A^*_{11} \left(\beta(\alpha)b+\sum_{i=1}^{m_v}\nu_i(\alpha)v^i\right)$.
By construction we have
\begin{equation}
\label{eq:6sddetect}
\frac{\gamma_1(\alpha)}{\gamma_2(\alpha)} \to \infty \text{ and } \frac{\gamma_2(\alpha)}{b(\alpha) + \sum_{i=1}^{m_v} \lvert \nu_i(\alpha) \rvert} \to \infty.
\end{equation}
Thus we conclude that the diagonal blocks of $\Za$ satisfy properties~\ref{itm:1sddetect} and~\ref{itm:2sddetect}.  In addition, the blocks containing $\gamma_1(S_1)$ and $\gamma_2(S_2)$ satisfy property~\ref{itm:3sddetect}.

By Lemma~\ref{lem:Zprincipal} we may continue in this fashion until,
\begin{equation}
\label{eq:7sddetect}
\Za = \begin{bmatrix}
0 & 0 & \cdots & 0 \\
0 & \gamma_d(\alpha)S_d & \cdots & 0 \\
\vdots & \vdots & \ddots & \vdots \\
0 & 0 & \cdots & \gamma_1(\alpha)S_1
\end{bmatrix} + \A^* \left( \beta(\alpha)b+\sum_{i=1}^{m_v}\nu_i(\alpha)v^i+\sum_{i=1}^{m_w} \omega_i(\alpha)w^i\right),
\end{equation}
for some positive integer $d$ and the upper left block has norm that is $\OO(\alpha)$.  By reasoning analogous to that of the discussion following \eqref{eq:3sddetect}, we conclude that the blocks of $\Za$, according to the block partition of \eqref{eq:7sddetect}, satisfy properties~\ref{itm:1sddetect} - \ref{itm:4sddetect} and that $d \in [\sd(\F), \bar m]$, as desired.
\end{proof}

The issue with different rates of convergence among the blocks of $\Za$ that vanish, is one of a practical nature.  Suppose a path-following algorithm is applied and accurately follows the primal-dual central path.  Then, once the the block of $\Za$ that converges to $0$ at the fastest rate, $Z_{d+1}(\alpha)$, reaches machine precision, the remaining blocks cannot be made smaller.  Hence the forward error is a function of the difference between the rate of convergence of the slowest block, $Z_2(\alpha)$, and the fastest block, $Z_{d+1}(\alpha)$.

The integer $d$ of Theorem~\ref{thm:sddetect} actually provides an upper bound on $\sd(\F)$ that complements the lower bound of Theorem~\ref{thm:lboundsd}.  However, this upper bound is generally intractable due to its reliance on an unknown orthogonal transformation.  If the statement of the theorem can be translated to a statement about convergence rates of blocks of eigenvalues, then we would have a tractable upper bound on $\sd(\F)$.  However this may not be true as illustrated by the parametric sequence,
\begin{equation}
\label{eq:cexample}
S(\alpha) := \begin{bmatrix}
3 & \alpha^{1/2} & 0 \\
\alpha^{1/2} & \frac{\alpha}{3-\alpha^2} & 0 \\
0 & 0 & \alpha^3
\end{bmatrix}, \ \alpha  > 0.
\end{equation}
Here $S(\alpha)$ has different rates of convergence among the diagonal, but the two eigenvalues that vanish, do so at the same rate, $\Theta(\alpha^3)$.  One way to guarantee that the diagonal blocks correspond to blocks of eigenvalues is the following.
\begin{corollary}
\label{corr:blocktoeig}
Let ${\{ (\Xa,\ya,\Za) : \alpha > 0\}}$ be the primal-dual central path of \eqref{eq:optimalsystem} for a spectrahedron $\F = \F(\A,b)$ satisfying Assumption~\ref{assump:Fpotshe}.  Let $\bar \alpha > 0$ be fixed and let $d$ be as in Theorem~\ref{thm:sddetect}.  For every $\alpha \in (0, \bar \alpha)$ we assume that $\Za$ has the block structure of Theorem~\ref{thm:sddetect}.  If every principal submatrix of $\Za$ of the form,
\[
S_i(\alpha) = \begin{bmatrix}
Z_i(\alpha) & \cdots & * \\
 \vdots & \ddots & \vdots \\
 * & \cdots & Z_1(\alpha)
\end{bmatrix}, \ i \in \{2,\dotso,d\},
\] 
satisfies $\lambda_{\min}(S_i(\alpha)) = \Theta \left(\lambda_{\min}(Z_i(\alpha)) \right)$, then there are exactly $d$ different rates of convergence among the eigenvalues of $\Xa$ that vanish.
\end{corollary}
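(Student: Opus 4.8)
The plan is to pin down the spectrum of $Z(\alpha)$ precisely and then read off the spectrum of $X(\alpha)$ from the identity $X(\alpha)=\alpha\,Z(\alpha)^{-1}$, which follows from $Z(\alpha)X(\alpha)=\alpha I$ in \eqref{eq:optimalsystem} together with $X(\alpha),Z(\alpha)\succ 0$. Set $n_i:=\dim Z_i(\alpha)$, $N_i:=n_1+\cdots+n_i$, and $\rho_i(\alpha):=\lambda_{\max}(Z_i(\alpha))=\lVert Z_i(\alpha)\rVert_2$, so that $S_i(\alpha)$ is the trailing $N_i\times N_i$ principal submatrix of $Z(\alpha)$ and $Z(\alpha)$ itself has size $n=N_d+n_{d+1}$. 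Properties (i)--(iv) of Theorem~\ref{thm:sddetect} give $\lambda_{\min}(Z_i)=\Theta(\rho_i)$ for every $i$ and $\rho_1=\Theta(1)\gg\rho_2\gg\cdots\gg\rho_d\gg\rho_{d+1}=\Theta(\alpha)$, where $\gg$ abbreviates $\rho_i/\rho_{i+1}\to\infty$; in particular the $d+1$ orders $\rho_1,\dots,\rho_{d+1}$ are pairwise separated.

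First I would sandwich $\lambda_k(Z(\alpha))$ by Cauchy interlacing, one block index at a time. For the lower bound at index $i$, interlacing of the principal submatrix $S_i(\alpha)$ inside $Z(\alpha)$ gives $\lambda_k(Z(\alpha))\ge\lambda_{N_i}(Z(\alpha))\ge\lambda_{\min}(S_i(\alpha))$ for $k\le N_i$, and it is here that the extra hypothesis $\lambda_{\min}(S_i(\alpha))=\Theta(\lambda_{\min}(Z_i(\alpha)))=\Theta(\rho_i)$ is used (extending the notation by $S_1:=Z_1$, for which the condition is automatic). For the upper bound, let $T_i(\alpha)$ be the leading $(n-N_i)$-dimensional principal submatrix of $Z(\alpha)$, whose diagonal blocks are $Z_{d+1},\dots,Z_{i+1}$; since $Z(\alpha)\succeq 0$, every off-diagonal block $B$ between $Z_j$ and $Z_k$ obeys $\lVert B\rVert_2\le\lVert Z_j\rVert_2^{1/2}\lVert Z_k\rVert_2^{1/2}=(\rho_j\rho_k)^{1/2}$, so summing over the $O(1)$ blocks of $T_i(\alpha)$ yields $\lVert T_i(\alpha)\rVert=O(\rho_{i+1})$, and interlacing then gives $\lambda_k(Z(\alpha))\le\lambda_{N_i+1}(Z(\alpha))\le\lVert T_i(\alpha)\rVert=O(\rho_{i+1})$ for $k>N_i$. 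Intersecting the lower bound at index $i$ with the upper bound at index $i-1$ (for $i=1$ using the trivial $\lVert Z(\alpha)\rVert=\Theta(1)$) gives $\lambda_k(Z(\alpha))=\Theta(\rho_i)$ for $k\in\{N_{i-1}+1,\dots,N_i\}$ when $i\le d$; for the trailing block, the upper bound at index $d$ gives $\lambda_k(Z(\alpha))=O(\alpha)$ for $k>N_d$, while Corollary~\ref{cor:eigrates} gives $\lambda_k(Z(\alpha))\ge\lambda_n(Z(\alpha))=\Theta(\alpha)$, so $\lambda_k(Z(\alpha))=\Theta(\alpha)=\Theta(\rho_{d+1})$ there. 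Hence $Z(\alpha)$ has exactly $n_i$ eigenvalues of order $\rho_i$, for each $i\in\{1,\dots,d+1\}$.

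It remains to transfer this to $X(\alpha)$ through $\lambda_k(X(\alpha))=\alpha/\lambda_{n+1-k}(Z(\alpha))$: the $n_i$ eigenvalues of $Z(\alpha)$ of order $\rho_i$ correspond to $n_i$ eigenvalues of $X(\alpha)$ of order $\alpha/\rho_i$. For $i=d+1$ this order is $\Theta(1)$, giving the $n_{d+1}=r$ nonvanishing eigenvalues; for $i\in\{1,\dots,d\}$, since $\rho_i\gg\rho_{d+1}=\Theta(\alpha)$ we have $\alpha/\rho_i\to 0$, so those eigenvalues vanish, and $\rho_1\gg\cdots\gg\rho_d$ forces the orders $\alpha/\rho_1\ll\cdots\ll\alpha/\rho_d$ to be pairwise separated. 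Therefore the vanishing eigenvalues of $X(\alpha)$ realize exactly the $d$ distinct rates $\alpha/\rho_1,\dots,\alpha/\rho_d$, which is the assertion of the corollary. I expect the eigenvalue-counting step to be the crux: one must show that the off-diagonal couplings of $Z(\alpha)$ neither push its spectrum above the block orders $\rho_i$ -- handled by the positive semidefinite block Cauchy--Schwarz estimate on $T_i(\alpha)$ -- nor drag it below them, the latter collapse being precisely what occurs for the sequence in \eqref{eq:cexample} and precisely what the hypothesis $\lambda_{\min}(S_i(\alpha))=\Theta(\lambda_{\min}(Z_i(\alpha)))$ is designed to rule out.
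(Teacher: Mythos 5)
Your proof is correct and follows essentially the same route as the paper: Cauchy interlacing of the trailing principal submatrices $S_i(\alpha)$ supplies the lower bound on the eigenvalue blocks (this is exactly where the assumed $\lambda_{\min}(S_i(\alpha))=\Theta(\lambda_{\min}(Z_i(\alpha)))$ enters), interlacing of the leading principal submatrices supplies the upper bound, and the identity $\Xa=\alpha\Za^{-1}$ transports the conclusion from $\Za$ to $\Xa$. The one place you go beyond the paper's terse argument is in justifying $\lVert T_{i-1}(\alpha)\rVert=\OO(\rho_i)$ via the positive semidefinite block Cauchy--Schwarz estimate $\lVert B\rVert_2\le\lVert Z_j\rVert_2^{1/2}\lVert Z_k\rVert_2^{1/2}$ on the off-diagonal blocks; the paper states that interlacing ``yields the upper bound of $\lambda_{\max}(Z_i(\alpha))$'' without spelling out why the leading submatrix's largest eigenvalue is $\OO(\lambda_{\max}(Z_i(\alpha)))$, so your added lemma-level detail genuinely closes a gap left implicit there.
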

\begin{proof}
Applying the interlacing eigenvalue theorem with the principal submatrices,
\[
\begin{bmatrix}
Z_{d+1}(\alpha) & \cdots & * \\
 \vdots & \ddots & \vdots \\
 * & \cdots & Z_i(\alpha)
\end{bmatrix} \text{ and }
\begin{bmatrix}
Z_i(\alpha) & \cdots & * \\
 \vdots & \ddots & \vdots \\
 * & \cdots & Z_1(\alpha)
\end{bmatrix},
\]
yields the upper bound of $\lambda_{\max}(Z_i(\alpha))$ and the lower bound $\lambda_{\min}(Z_i(\alpha))$ on a block of $\lambda(\Za)$ having the same size as the number of rows $Z_i(\alpha)$.  Since $\lambda_{\min}(Z_i(\alpha)) = \Theta \left( \lambda_{\max}(Z_i(\alpha)) \right)$, these bounds are the same.  Thus we conclude that for each $i \in \{2,\dotso,d+1\}$ there is a block of eigenvalues that converges to $0$ at the same rate as $Z_i(\alpha)$ does.  The desired result follows from the relation $\Xa = \alpha \Za^{-1}$.
\end{proof}
The challenge for this result is that the hypothesis is unverifiable just as the block structure of $\Za$ is unobservable without knowledge of the appropriate orthogonal transformation.  However, our numerical observations indicate that the conclusion of the corollary holds for instances where singularity degree is known.


\section{Numerical Case Studies}
\label{sec:numerics}
In this section we apply the main results above to obtain bounds on forward error, singularity degree, and maximum rank for several problems from the literature.  Our analysis is focused on problems with larger singularity degree, although, we do study one instance with singularity degree $1$, in order to demonstrate `good' convergence.  For some of the instances, the exact singularity degree is known, allowing us to test the quality of our bounds.  In other instances, the singularity degree is not known and we use our bounds to provide an estimate of it.

In order to study the notion that large singularity degree is sufficient, in some sense, for slow convergence, we consider bounded spectrahedra, satisfying Assumption~\ref{assump:Fpotshe}.  We follow the primal-dual central path of \eqref{eq:optimalsystem} with a path-following algorithm based on the Gauss-Newton search direction, see~\cite{KrMuReVaWo:98,KrukDoanW:10}.

For each problem, we present two plots based on a sequence of the type $\{ \sigma^k\}$, as in Section~\ref{sec:boundrankferror}, where $\sigma =0.6$.  The first is the $Q$-convergence ratio as in Theorem~\ref{thm:eigQrate}.  To make the subsequent discussion less cumbersome, we introduce the notation,
\begin{equation}
\label{eq:RQ}
R_Q(i,k) := \frac{\lambda_i(X(\sigma^{k+1}))}{\lambda_i(X(\sigma^k))}, \ i \in \{1,\dotso,n\}, \ k \ge 1.
\end{equation}
We use this first ratio to bound maximum rank (Corollary~\ref{corr:eigQratebound}), forward error (Theorem~\ref{thm:ferrorbound}), and singularity degree (Theorem~\ref{thm:lboundsd}).

The second plot is the ratio of adjacent eigenvalues as in \eqref{eq:m1},
\begin{equation}
\label{eq:R_N}
R_N (i,k) := \frac{\lambda_i(X(\sigma^{k}))}{\lambda_{i+1}(X(\sigma^k))}, \ i \in \{1,\dotso,n-1\}, \ k \ge 1.
\end{equation}
This ratio is used to upper bound the maximum rank as in Proposition~\ref{prop:eig} and to determine the number of different rates of convergence among eigenvalues that vanish. 

For each value of $k$, we approximate the ratios $R_Q(i,k)$ and $R_N(i,k)$ by following the primal-dual central path $(\Xa,\ya,\Za)$ of \eqref{eq:optimalsystem} to the point where $\alpha = \sigma^k$.   Once the ratios have been obtained for $k$ sufficiently large, around $60$, we generate plots of the ratios against $k$ for each $i$ and obtain bounds on maximum rank, forward error, and singularity degree.  The bounds as well as the true values (when available) are recorded in Table~\ref{table}.

\begin{figure}[h]
\centering
\begin{minipage}{.5\textwidth}
  \centering
  \includegraphics[width=\textwidth]{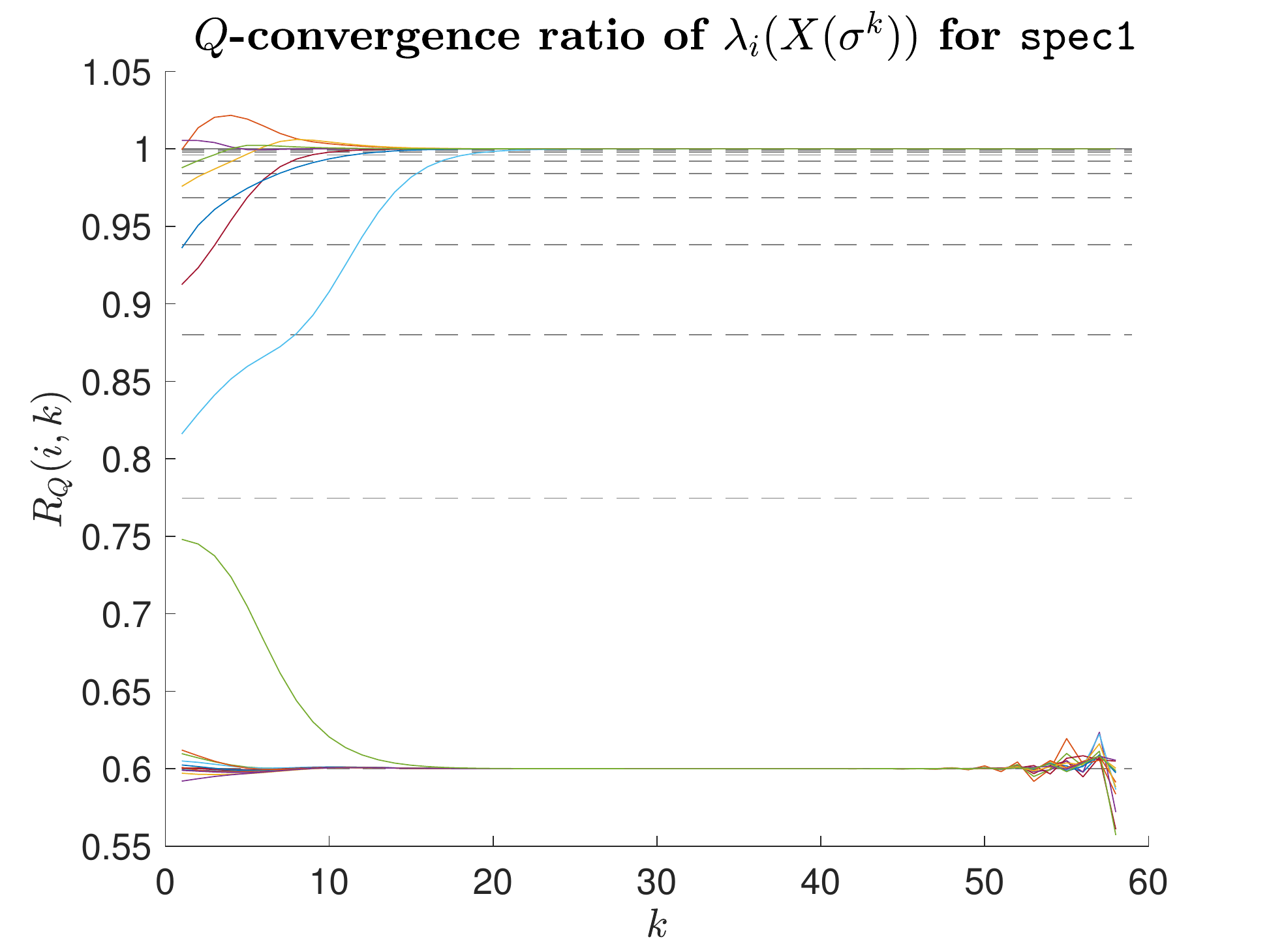}
\end{minipage}%
\begin{minipage}{.5\textwidth}
  \centering
  \includegraphics[width=\textwidth]{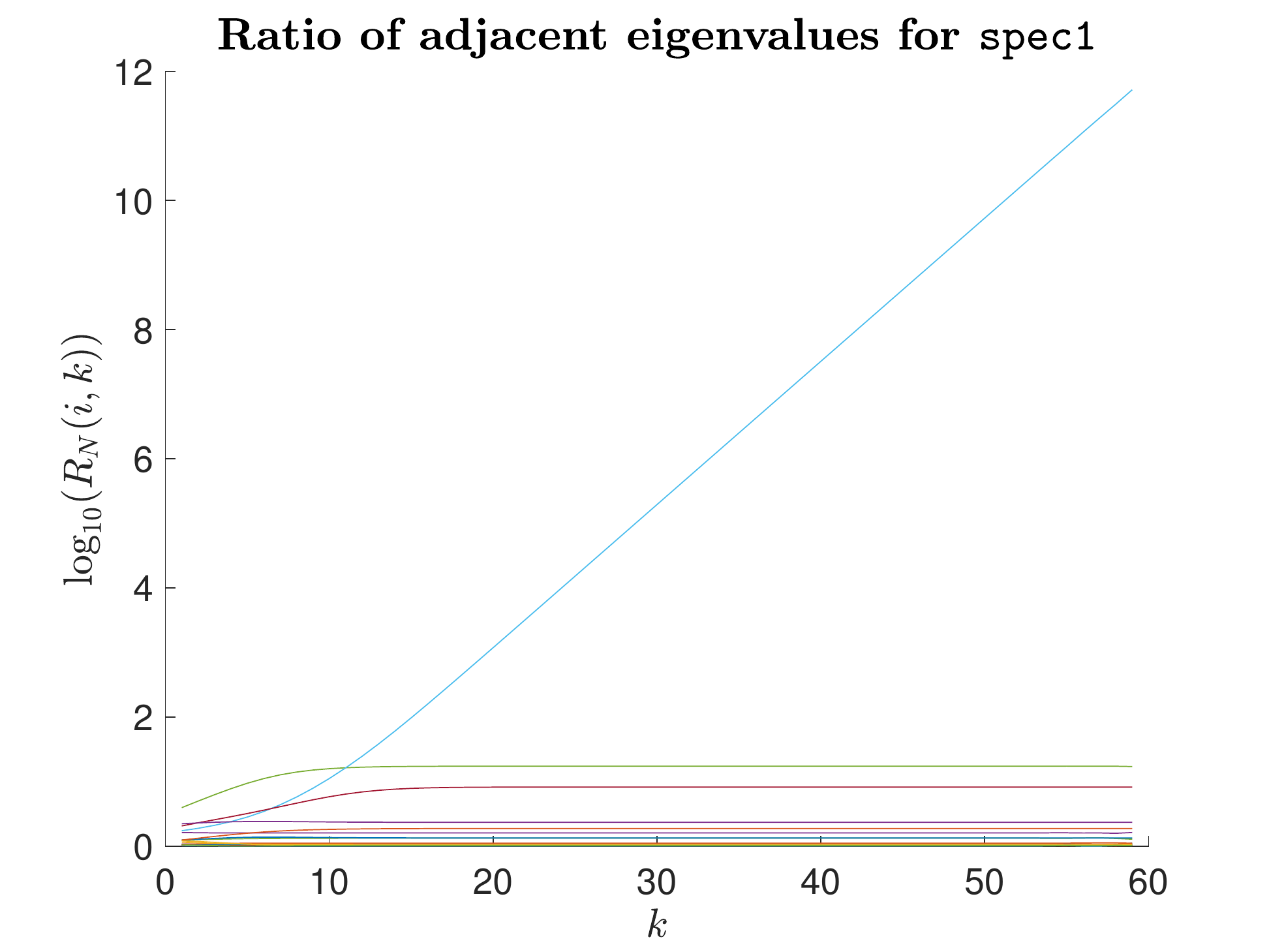}
\end{minipage}
\caption{The dashed lines coincide with the values $\sigma^{2^{-(d-1)}}$ for $d \in \{1,\dotso,n-1\}$.}
\label{fig:test1}
\end{figure}

We denote the first spectrahedron as \texttt{spec1} and it is taken from the class of SDPs introduced in~\cite{WeiWolk:06}.  There, the authors present a method to generate SDPs with specified \emph{complementarity gap}.  By considering the optimal set of one such SDP with strict complementarity, we obtain a spectrahedron with singularity degree $1$.  The problem we consider has size $n=20$ and plots of the ratios $R_Q(i,k)$ and $R_N(i,k)$ are shown in Figure~\ref{fig:test1}.  In the left image, there is a clear distinction between curves that converge to $1$ and curves that do not converge to $1$.  Moreover, if we disregard the irregularity in the last few values of the curves that do not converge to $1$, we may conclude that those curves converge to the smallest dashed line located at $0.6$. This observation, together with Theorem~\ref{thm:sd1}, correctly indicates that the spectrahedron has singularity degree $1$.  Exactly $13$ of the curves converge to $0.6$, yielding $\overline r=7$, the correct approximation of the maximal rank $r$.  The plot on the right side of the figure shows that exactly one curve blows up and it is the curve corresponding to $i=7=r$.  This indicates, as expected, two groups of eigenvalues of~$X$: those that converge to positive values and those that vanish.  

\begin{figure}[h]
\centering
\begin{minipage}{.5\textwidth}
  \centering
  \includegraphics[width=\textwidth]{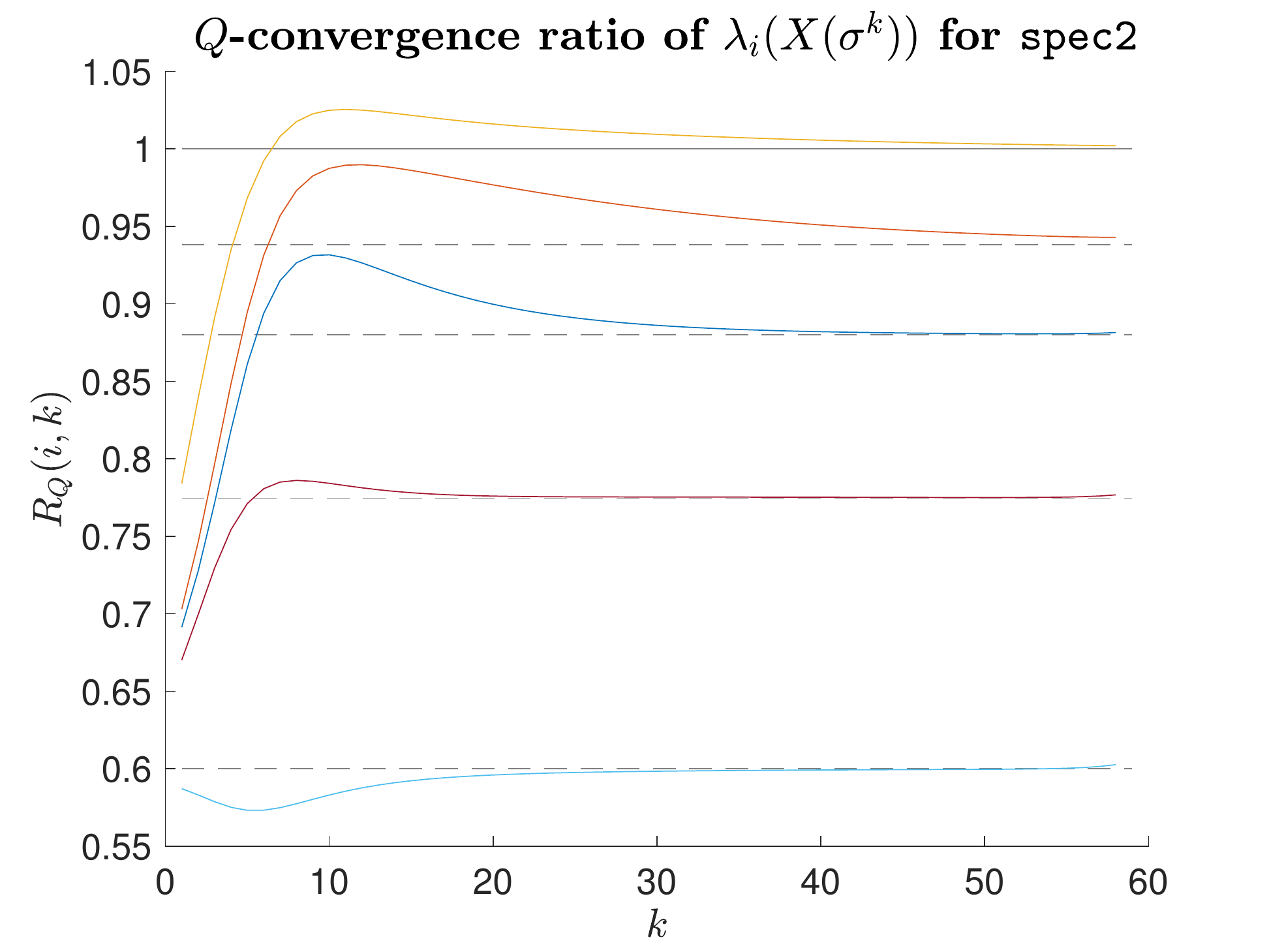}
\end{minipage}%
\begin{minipage}{.5\textwidth}
  \centering
  \includegraphics[width=\textwidth]{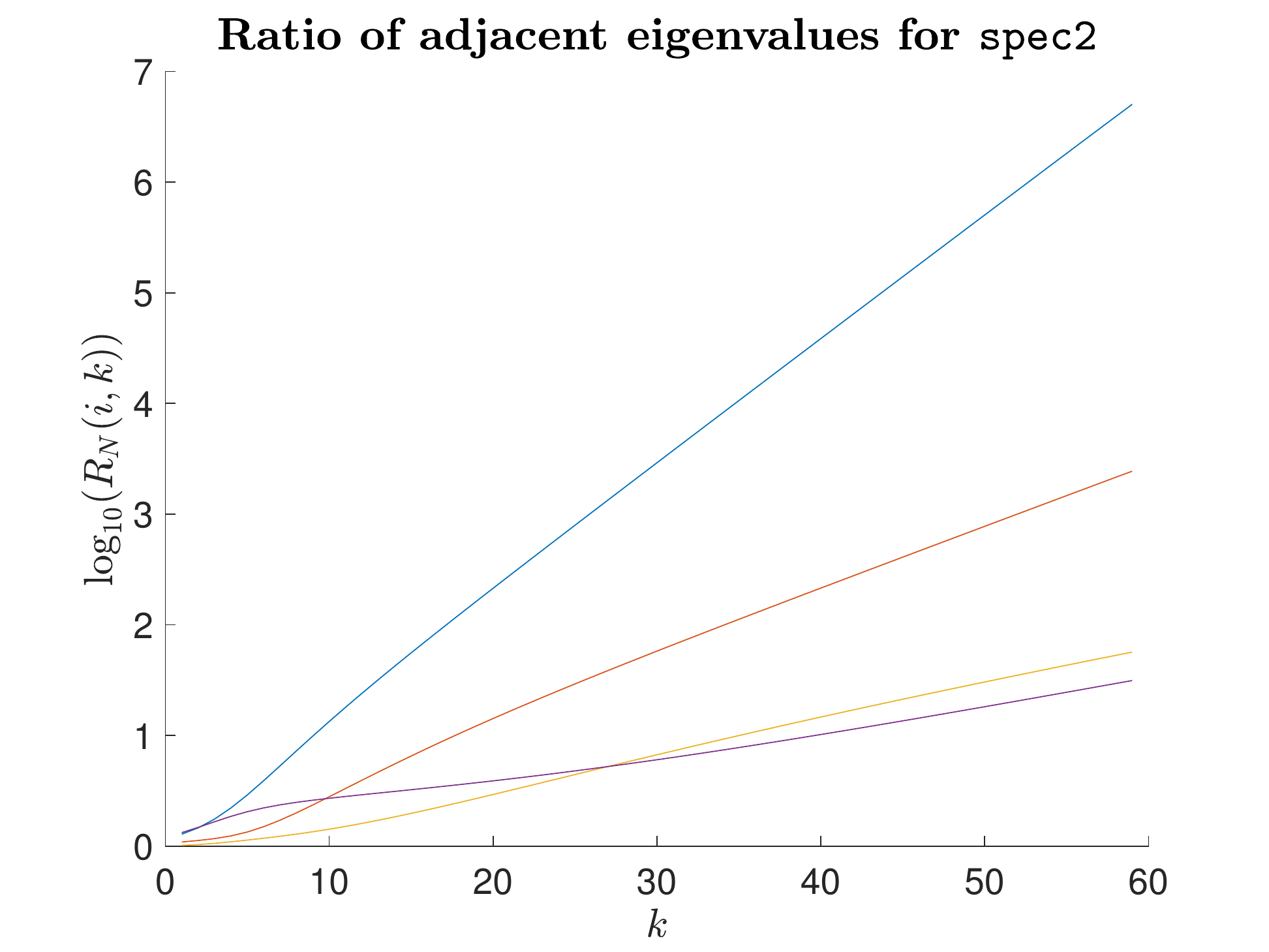}
\end{minipage}
\caption{}
\label{fig:test2}
\end{figure}

The second spectrahedron, denoted \texttt{spec2}, is the classical `worst case' problem as presented in~\cite{MR2724357}, with $n=5$ and singularity degree $n-1=4$.  Plots of the two ratios are in Figure~\ref{fig:test2}.  The left image shows $5$ distinct rates of $Q$-convergence, one for each eigenvalue.  All but one of the curves converge to values that are clearly below $1$.  This indicates, correctly, that the maximum rank is at most $1$.  The largest of the curves appears to converge to the highest of the dashed lines.  Thus we may infer that singularity degree is at least $4$.  Since $4=n-1$, the worst case upper bound, we may conclude that singularity degree is exactly $4$.  The row corresponding to \texttt{spec2} in Table~\ref{table} shows a very large discrepancy between forward error and backward error.  Our lower bound is actually quite close to the true forward error.  Now let us consider the image on the right.  It may be somewhat speculative to assert that the two lower curves blow up.  Thus, taking the more cautious approach we assume that only the two larger curves blow up.  Checking the indices of these curves yields an upper bound of $3$ on the maximum rank.  We choose the notably lower estimate of $1$ based on the left plot.  On the other hand if we are to apply Corollary~\ref{corr:blocktoeig} then we would want an overestimate of the number of different rates of convergence among the eigenvalues of $\Xa$ that vanish.  For this number we include the two lower curves, giving a bound of $4$.

\begin{figure}[h]
\centering
\begin{minipage}{.5\textwidth}
  \centering
  \includegraphics[width=\textwidth]{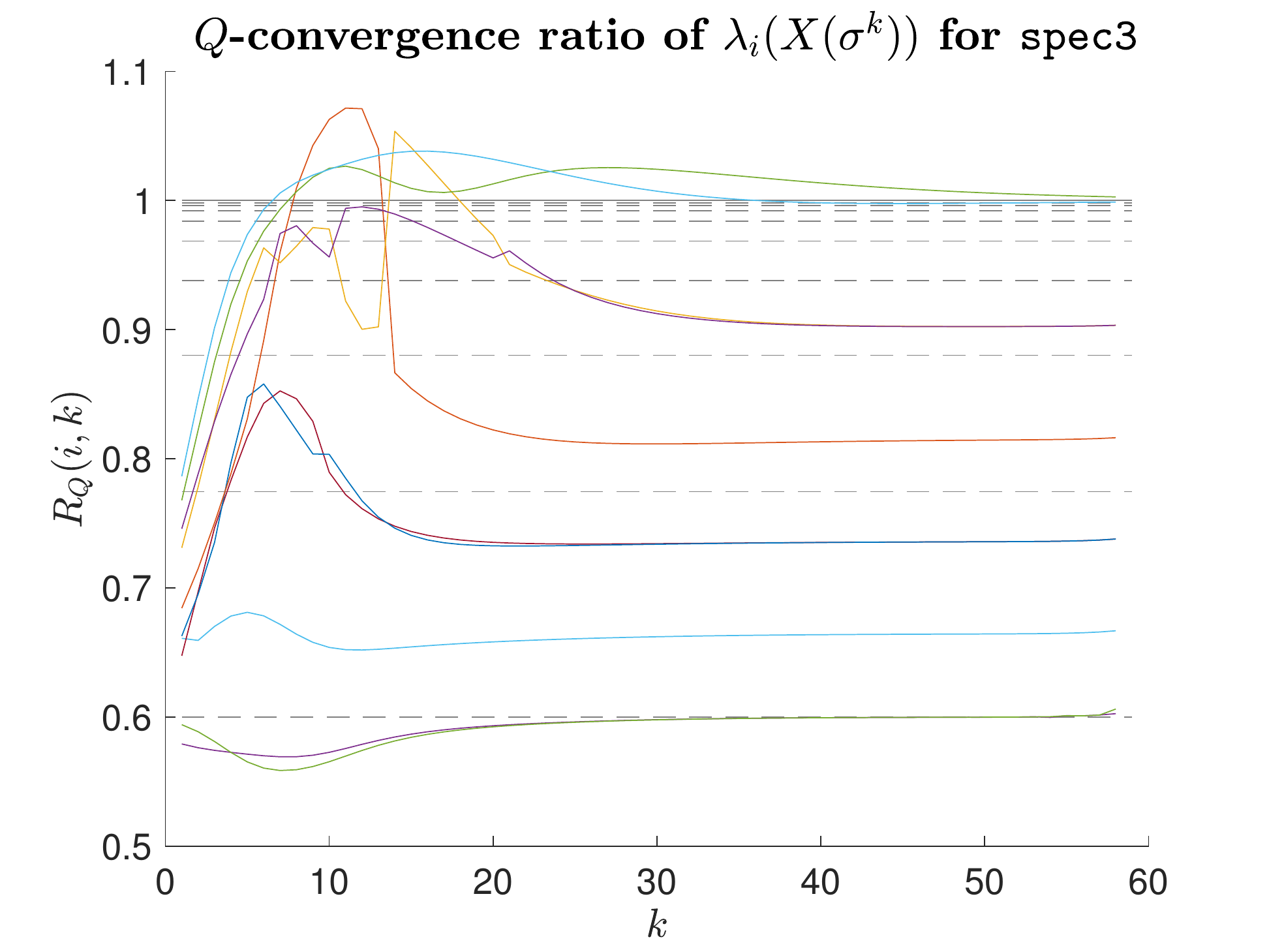}
\end{minipage}%
\begin{minipage}{.5\textwidth}
  \centering
  \includegraphics[width=\textwidth]{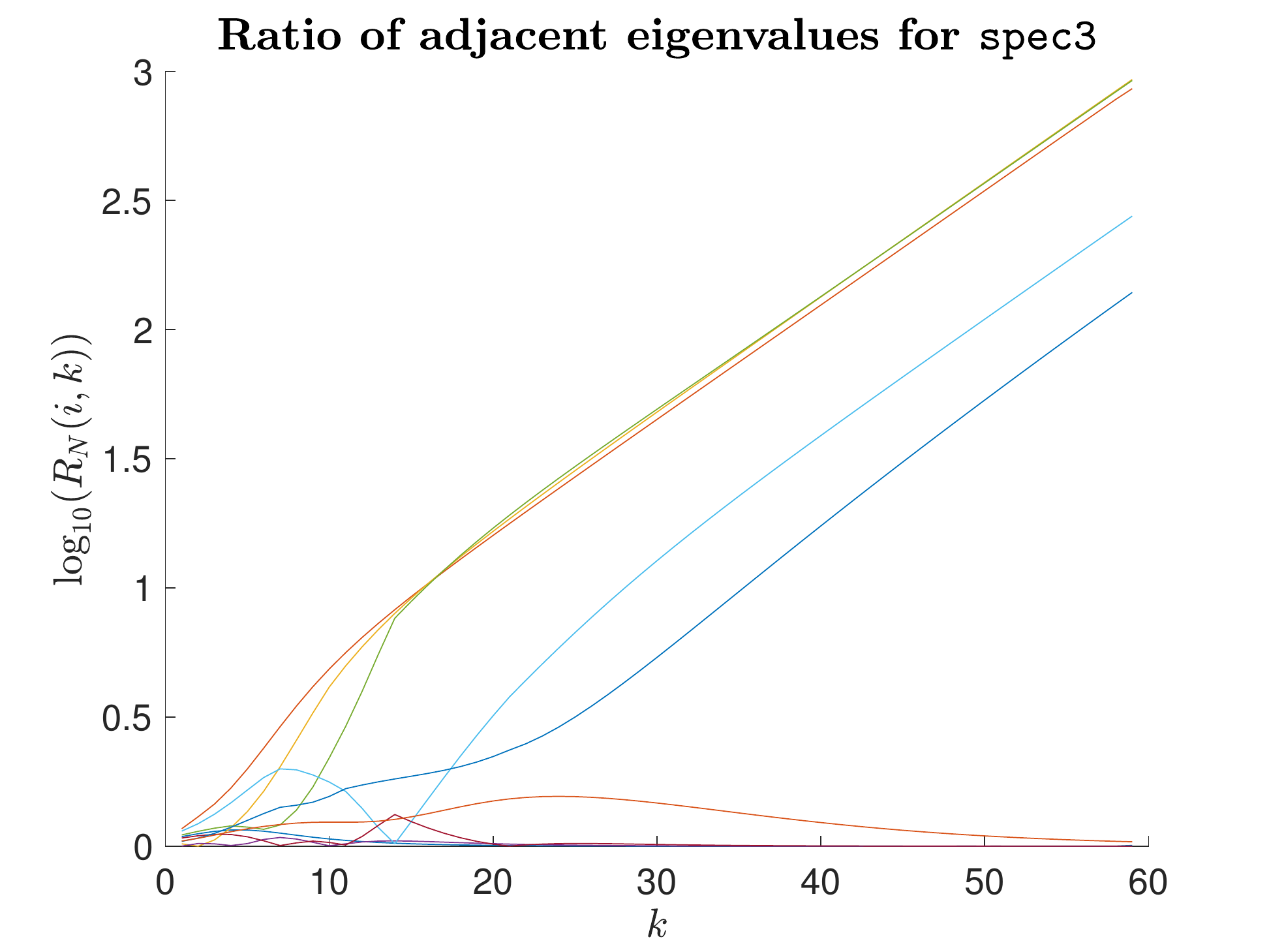}
\end{minipage}
\caption{}
\label{fig:test3}
\end{figure}

For the next spectrahedron, \texttt{spec3}, the authors of~\cite{WaNaMu:12} observed ``strange behaviour" when attempting to optimize over it with an interior point method.  The dimension is $n=10$ and the singularity degree is proven to be $5$.  In Table~\ref{table} we see a large discrepancy between forward error and backward error.  The left image of Figure~\ref{fig:test3} shows six distinct groups of curves.  It is clear, for all but two of the curves, that the limit point is different from $1$.  Thus we have an upper bound of $2$ on the maximum rank.  This upper bound gives quite an accurate lower bound on the forward error. The largest of the curves that does not converge to $1$ appears to converge to a value that is below the fourth dashed line, indicating a lower bound of $4$ on the singularity degree.  Unlike the two previous spectrahedra, here the lower bound on singularity degree is a strict one.  The image on the right shows exactly five different rates of convergence among the eigenvalues of $\Xa$ that converge to $0$.  Moreover the upper bound on maximum rank corresponds to the one obtained from the left image.

\begin{figure}[h]
\centering
\begin{minipage}{.5\textwidth}
  \centering
  \includegraphics[width=\textwidth]{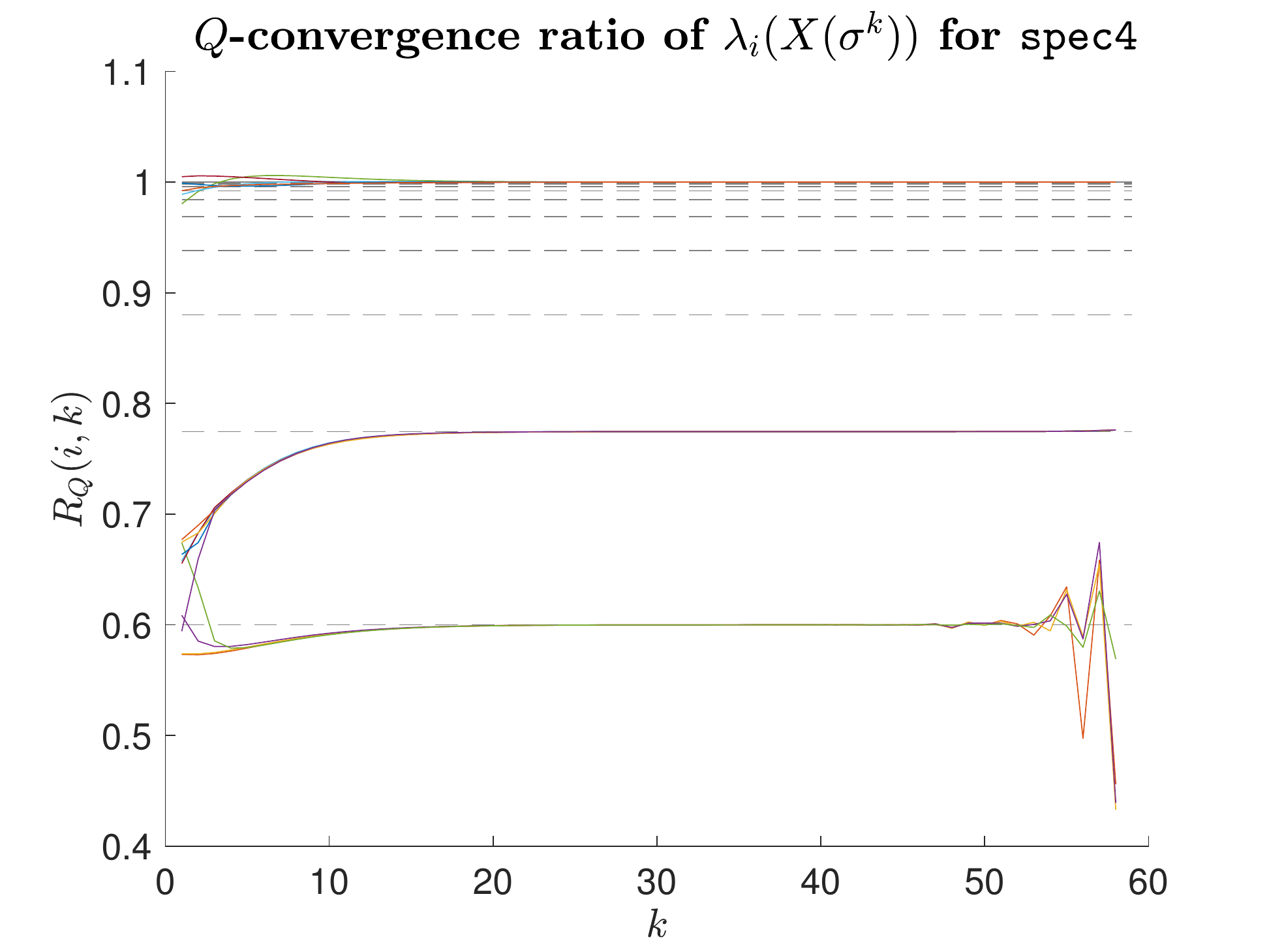}
\end{minipage}%
\begin{minipage}{.5\textwidth}
  \centering
  \includegraphics[width=\textwidth]{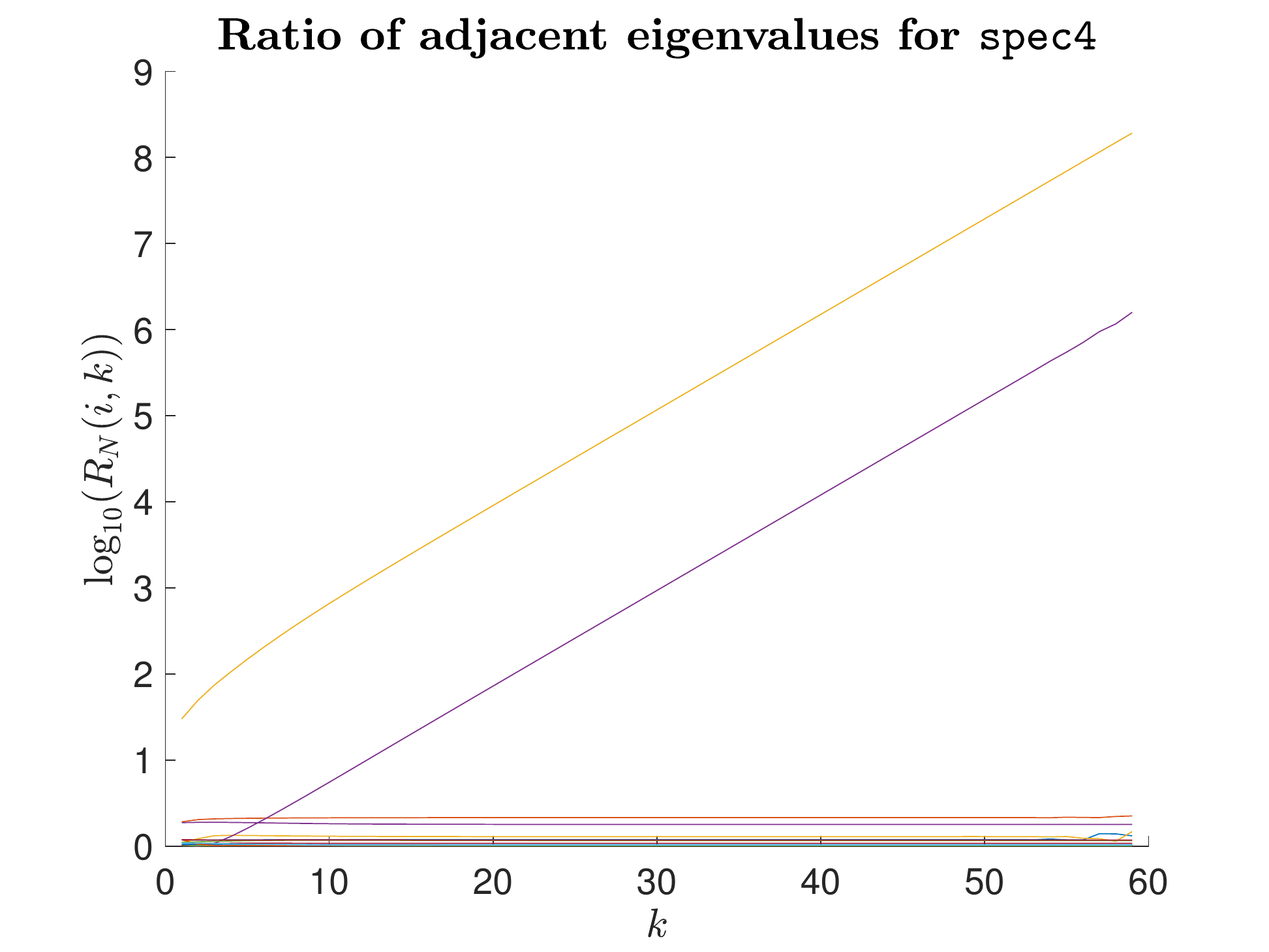}
\end{minipage}
\caption{}
\label{fig:test4}
\end{figure}

The fourth spectrahedron, \texttt{spec4}, is generated by the algorithm of~\cite{WeiWolk:06}, just as \texttt{spec1} is.  However, for this instance we require the existence of a complementarity gap,~i.e.,~strict complementarity does not hold.  While we do not know the exact value of the singularity degree, the lack of strict complementarity gives us a lower bound of $2$.  Plots of the ratios $R_Q(i,k)$ and $R_N(i,k)$ are shown in Figure~\ref{fig:test4}.  From the left image we can be quite sure that those curves that converge to the second dashed line or below do not converge to $1$.  A closer inspection reveals that there are $10$ such curves, implying an upper bound of $5$ on the maximum rank.   The corresponding lower bound on forward error, as recorded in Table~\ref{table}, is indeed a lower bound and more informative than the reported backward error.  We also obtain a lower bound on the singularity degree that coincides with the theoretical lower bound of $2$.  The image on right shows exactly two rates of convergence among eigenvalues of $\Xa$ that converge to $0$ and, once again, provides the same upper bound on maximum rank as obtained from the image on left.

\begin{figure}[h]
\centering
\begin{minipage}{.5\textwidth}
  \centering
  \includegraphics[width=\textwidth]{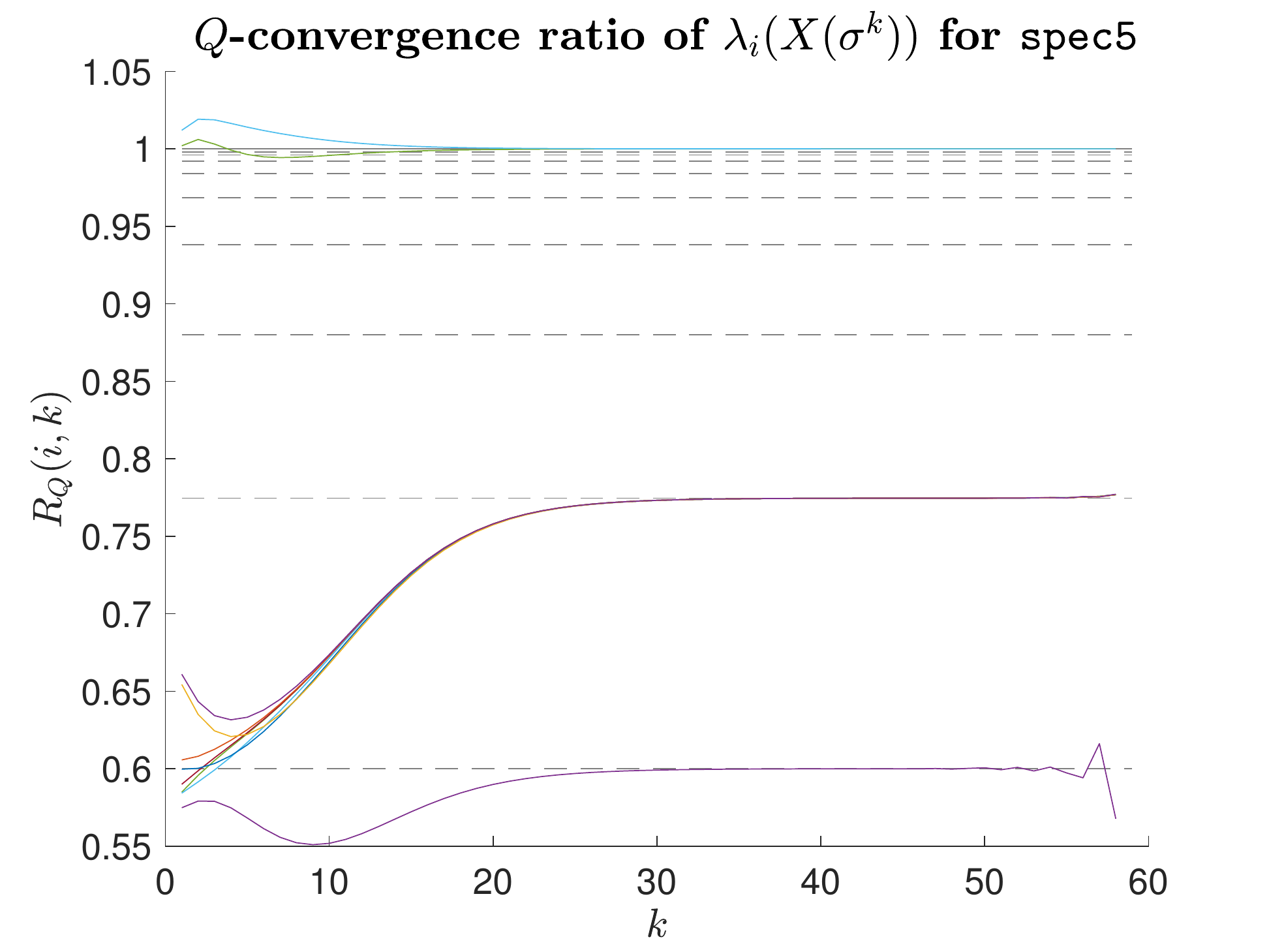}
\end{minipage}%
\begin{minipage}{.5\textwidth}
  \centering
  \includegraphics[width=\textwidth]{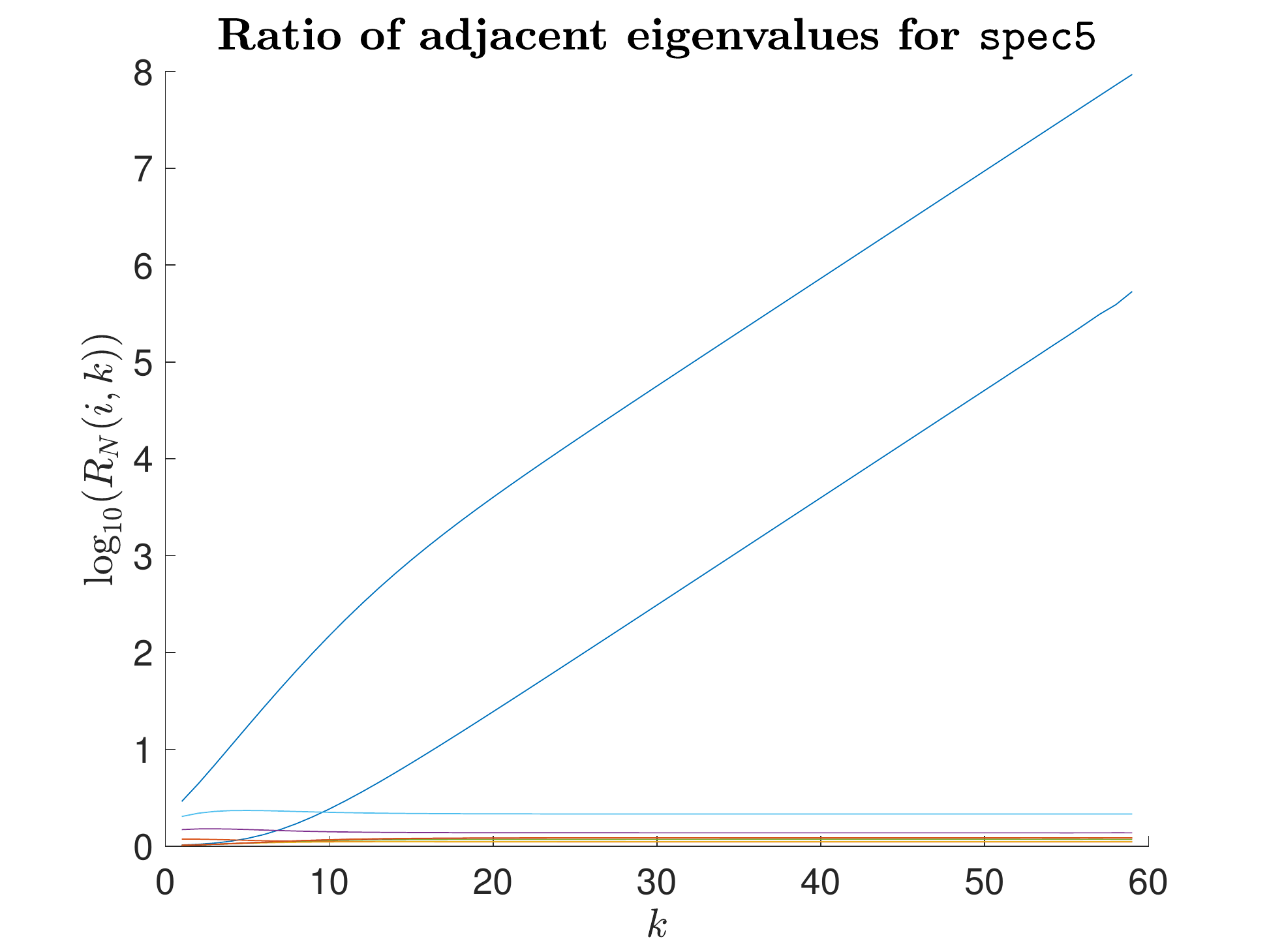}
\end{minipage}
\caption{}
\label{fig:test5}
\end{figure}

The final spectrahedron we consider is a \emph{Toeplitz cycle completion problem} of the form introduced in Corollary~6 of~\cite{MR1236734}.  The specific instance we consider here is that of Example~4.5 of~\cite{SWWb:17} with $n=10$.  While the singularity degree for this problem is not known, a lower bound of $2$ was proven in~\cite{SWWb:17}.  In Figure~\ref{fig:test5}, we find images of plots of the ratios $R_Q(i,k)$ and $R_N(i,k)$.  The left image indicates that all but two of the eigenvalues of $\Xa$ converge to $0$, yielding  an exact approximation of maximum rank.  Moreover, the corresponding eight curves appear to have limits below the second dashed line.  Hence we have a lower bound of $2$ on the singularity degree.  This coincides with the theoretical lower bound.

\begin{table}[h]
\centering
\caption{A record of relevant measures and their bounds for the spectrahedra considered in our analysis. Here $\overline r$, \underline{$\epsilon$}, and \underline{$d$} denote the upper bound on rank, lower bound on forward error, and lower bound on singularity degree, respectively.  The number of different rates of convergence among the eigenvalues of $\Xa$ that vanish is denoted by $N_{\lambda}$.}
\label{table}
\begin{tabular}{| l | c | c | c | c | c | c | c | c | c |}
\hline 
$\F$ & $\epsilon^b(\F)$ & $r$ & $\overline r$ &  $\epsilon^f(\F)$ & \underline{$\epsilon$} & $\sd(\F)$ & \underline{$d$} & $N_{\lambda}$ \\
\hline 
\texttt{spec1} & $6.62 \times 10^{-11}$ & $7$ & $7$ & $4.36\times 10^{-11}$ & $3.10 \times 10^{-12}$ & $1$ & $1$ & $1$ \\
\texttt{spec2} & $4.44 \times 10^{-13}$ & $1$ & $1$ & $4.93 \times 10^{-2}$ & $3.19\times 10^{-2}$ & $4$ & $4$ & $4$ \\ 
\texttt{spec3} & $2.47\times 10^{-13}$ & $2$ & $2$ & $3.39\times 10^{-2}$ & $9.88\times 10^{-3}$ & $5$ & $4$ & $5$ \\
\texttt{spec4} & $1.88 \times 10^{-11}$ & $5$ & $5$ & $1.35 \times 10^{-5}$ & $4.16 \times 10^{-7}$ & $\ge 2$ & $2$ & $2$ \\
\texttt{spec5} & $2.61\times 10^{-13}$ & $2$ & $2$ & $1.22\times 10^{-6}$ & $6.96 \times 10^{-8}$ & $\ge 2$ & $2$ & $2$ \\
\hline
\end{tabular}
\end{table}

In these case studies we have demonstrated the ability to upper bound maximum rank quite effectively.  The resulting lower bound on forward error is of a much larger magnitude than backward error in all instances with the exception of \texttt{spec1}, where the singularity degree is~$1$.  We see this feature as quite useful, as it alerts practitioners that the proposed solution is of substantially lower accuracy than backward error indicates.  For spectrahedra with known singularity degree, we have demonstrated that the lower bound is quite accurate.  In the other cases, the lower bound is in agreement with the theoretical lower bound.  Lastly, for these test cases (as well as for others we have tested), the value $N_{\lambda}$ seems to be an upper bound on singularity degree. Proving this, or demonstrating a counterexample, is an interesting topic for future research.

\cleardoublepage

\bibliographystyle{plain}
\bibliography{../../mytexmf/.master,../../mytexmf/.psd,../../mytexmf/.edm,../../mytexmf/.bjorBOOK,../../mytexmf/.qap} 
\addcontentsline{toc}{section}{Bibliography}

\end{document}